\newcommand{\N}{\mathbb{N}}
\newcommand{\C}{\mathbb{C}}
\newcommand{\R}{\mathbb{R}}
\newcommand{\dd}{\mathrm{d}}
\newcommand{\dz}{\partial_z}
\newcommand{\dzbar}{\partial_{\bar{z}}}
\newcommand{\bn}{\mathbf{n}}
\newtheorem{theorem}{Theorem}[section]
\newtheorem{lemma}[theorem]{Lemma}
\newtheorem{corollary}[theorem]{Corollary}
\newtheorem{proposition}[theorem]{Proposition}
\newtheorem{definition}[theorem]{Definition}
\theoremstyle{definition}
\newtheorem{remark}[theorem]{Remark}
\renewcommand{\epsilon}{\varepsilon}
\newcommand{\ee}{\mathrm{e}}
\DeclareMathOperator{\Dom}{Dom}
\DeclareMathOperator{\Ran}{Ran}
\DeclareMathOperator{\Ker}{Ker}
\DeclareMathOperator{\diag}{diag}
\title[Schrödinger operators with~non-local singular potentials]{Two-dimensional Schrödinger operators with~non-local singular potentials}
\date{\today}
\begin{document}

\author[L. Heriban]{Luk\'{a}\v{s} Heriban}
\address{Department of Mathematics, Faculty of Nuclear Sciences and Physical Engineering\\
Czech Technical University in Prague \\
Trojanova 13, 120 00, Prague \\
E-mail: {\tt heribluk@fjfi.cvut.cz}}

\author[M. Holzmann]{Markus Holzmann}
\address{Institut f\"{u}r Angewandte Mathematik\\
Technische Universit\"{a}t Graz\\
 Steyrergasse 30, 8010 Graz, Austria\\
E-mail: {\tt holzmann@math.tugraz.at}}

\author[C. Stelzer-Landauer]{Christian Stelzer-Landauer}
\address{Institut f\"{u}r Angewandte Mathematik\\
Technische Universit\"{a}t Graz\\
 Steyrergasse 30, 8010 Graz, Austria\\
E-mail: {\tt christian.stelzer@tugraz.at}}

\author[G. Stenzel]{Georg Stenzel}
\address{Institut f\"{u}r Angewandte Mathematik\\
Technische Universit\"{a}t Graz\\
 Steyrergasse 30, 8010 Graz, Austria\\
E-mail: {\tt gstenzel@math.tugraz.at}}

\author[M. Tu\v{s}ek]{Mat\v{e}j Tu\v{s}ek}
\address{Department of Mathematics, Faculty of Nuclear Sciences and Physical Engineering\\
Czech Technical University in Prague \\
Trojanova 13, 120 00, Prague \\
E-mail: {\tt matej.tusek@fjfi.cvut.cz}}

\keywords{Schr\"odinger operator, non-local singular potentials, extension theory, boundary triples, spectral theory, non-relativistic limit}

\begin{abstract}
In this paper we introduce and study a family of self-adjoint realizations of the Laplacian in $L^2(\R^2)$ with a new type of transmission conditions along a closed bi-Lipschitz curve $\Sigma$. These conditions incorporate jumps in the Dirichlet traces both  of the functions in the operator domains and of their Wirtinger derivatives and are non-local. Constructing a convenient generalized boundary triple, they may be parametrized by all compact hermitian operators in $L^2(\Sigma;\C^2)$. Whereas for all choices of parameters the essential spectrum is stable and equal  to $[0, +\infty)$, the discrete spectrum exhibits diverse behaviour. While in many cases it is finite, we will describe also a class of parameters for which the discrete spectrum is infinite and accumulates at $-\infty$. The latter class contains a non-local version of the oblique transmission conditions. Finally, we will connect the current model to its relativistic counterpart studied recently in \cite{HT23}.
\end{abstract}

\subjclass[2020]{Primary 81Q10; Secondary 35Q40}
\maketitle

\section{Introduction}

Differential operators with singular potentials supported on sets of Lebesgue measure zero have attracted a lot of attention in the mathematical community in the recent years; cf. the monograph \cite{AGHH05} or the review papers \cite{BHSLS23, E08} and the references therein. In many situations, there is a physical motivation to study these operators, as the singular interactions can be viewed as idealized replacements for regular interactions--let us just mention \cite{dKP1931, RuSch53, T35} as representatives of pioneering influential works. However, there is often also a pure mathematical motivation as these models have interesting spectral properties, see \cite{AMV15, BHOP20, BHSLS23, BHS23, BHT22, BLL12, BP24, EI01, EP14, MPS16, S04a, S04b} for a small and certainly incomplete selection of results, and the idealized models are used as a playground to see what one can expect from more regular models.

The main aim of this paper is to introduce and study a new family of operators, which  are self-adjoint in $L^2(\R^2)$ and act as the two-dimensional Laplacian away from the Lipschitz continuous boundary $\Sigma$ of an open bounded set $\Omega_+$. In particular, this family contains operators that may be naturally associated with the formal expression
\begin{equation} \label{eq:elementary_formal}
-\Delta+\alpha|\delta_\Sigma\rangle\langle\delta_\Sigma|+\beta |\delta_\Sigma\rangle\langle\dz\delta_\Sigma|+\bar\beta|\dz\delta_\Sigma\rangle\langle\delta_\Sigma|+\gamma|\dz\delta_\Sigma\rangle\langle\dz\delta_\Sigma|,
\end{equation}
where $\alpha,\,\gamma\in\R,\, \beta\in\C$, $\delta_\Sigma$ stands for the single layer (or $\delta$-) distribution supported on $\Sigma$, and $\partial_z$ denotes the Wirtinger derivative. Here, we are using the common physical \emph{bra-ket} notation extended to distributions which, in turn, are extended to functions that admit well-defined Dirichlet traces with respect to $\Omega_+$ and $\Omega_-:=\R^2\setminus\overline{\Omega_+}$. Following similar considerations as in Section 3 of \cite{HT23}, one derives that, for $f\equiv f_+\oplus f_-$ from a suitable subspace of $L^2(\R^2)\equiv L^2(\Omega_+)\oplus L^2(\Omega_-)$, the image of $f$ under \eqref{eq:elementary_formal} belongs to $L^2(\R^2)$ if and only if the transmission conditions 
\begin{equation} \label{eq:TC_sub}
4 \begin{pmatrix}
		\bar{\bn} ( \gamma_D^{+} \dzbar f_{+} - \gamma_D^{-} \dzbar f_{-} ) \\
		\bn (\gamma_D^{+} f_{+} - \gamma_D^{-} f_{-} ) 
 \end{pmatrix} 
 +B\, \begin{pmatrix}
				 \gamma_D^{+} f_{+} +  \gamma_D^{-} f_{-}   \\
				- \gamma_D^{+} \dzbar f_{+} - \gamma_D^{-} \dzbar f_{-} 
	  \end{pmatrix}=0
\end{equation}
are fulfilled,
where $\gamma_D^{\pm}$ denote the Dirichlet trace operators with respect to $\Omega_\pm$, $\bn:=n_1+i n_2$ is the complexified outer unit normal vector $n$ to $\Omega_+$, and $B$ is the finite rank operator in $L^2(\Sigma;\C^2)$ given by
\begin{equation} \label{def_B_intro}
B\varphi:=\begin{pmatrix}
			\alpha & \beta\\
			\bar{\beta} & \gamma
		  \end{pmatrix}
  \int_\Sigma\varphi.
\end{equation}
Notice that in \eqref{eq:TC_sub} the jumps of $f$ and its Wirtinger derivative are given by integral values of the traces of $f$ along $\Sigma$, i.e. the transmission condition is \emph{non-local}.

In our general setting, we will allow $B$ in \eqref{eq:TC_sub} to be any compact self-adjoint operator in $L^2(\Sigma;\C^2)$. The reason for this choice is essentially two-fold. Firstly, the transmission conditions \eqref{eq:TC_sub} then correspond to a generalization of the formal expression \eqref{eq:elementary_formal} that would look like
\begin{equation} \label{eq:compact_formal}
\begin{split}
- \Delta 
 + \sum_{n=1}^{\infty} b_n \Big( &| \varphi_{n}^1 \delta_{\Sigma} \rangle \langle \varphi_{n}^1 \delta_{\Sigma} | +   | \varphi_{n}^1 \delta_{\Sigma} \rangle \langle \dz (\varphi_{n}^2 \delta_{\Sigma}) | \\
 &+  | \dz (\varphi_{n}^2 \delta_{\Sigma}) \rangle \langle \varphi_{n}^1 \delta_{\Sigma} | + | \dz (\varphi_{n}^2 \delta_{\Sigma}) \rangle  \langle \dz (\varphi_{n}^2 \delta_{\Sigma}) | \Big),
\end{split}
\end{equation}
where $(b_n,(\varphi_n^1,\varphi_n^2)^T),\, n\in\N,$ are the eigenpairs of $B$ with orthonormal eigenfunctions; cf. Proposition~\ref{PropFormalDiffExpression}. Since the latter interaction is  a limit of linear combinations of non-local interaction terms that generalize those  in~\eqref{eq:elementary_formal}, we call also the more general interactions in~\eqref{eq:compact_formal} \emph{non-local}.
 The precise definition of the associated operator $T_B$ then reads
\begin{equation} \label{eq:def_TB_alt}
\begin{split}
T_B f &= ( - \Delta f_{+} ) \oplus ( - \Delta f_{-} ), \\
\text{Dom}(T_B) &= \big\{ f \in H^{1/2}(\mathbb{R}^2 \setminus \Sigma) \: \big| \: \Delta f_\pm \in L^2(\Omega_\pm), \partial_{\overline{z}} f_\pm \in H^{1/2}(\Omega_\pm) \text{ and \eqref{eq:TC_sub} holds}\big\},
\end{split}
\end{equation}
where $H^{1/2}$ denotes the $L^2$-based Sobolev spaces of order $\frac{1}{2}$.
With proper techniques, it is not difficult to show that $T_B$ is self-adjoint (see Theorem~\ref{TBSelfadjoint}). Our second  and most important motivation is that we found out that already in this setting, that comprises relatively small perturbations of the free Laplacian (in the sense that $T_0=-\Delta$ and that the potential in \eqref{eq:compact_formal} is a convergent sum of formal "finite rank operators"), the point spectrum of $T_B$ may be very diverse; the essential spectrum is, as expected, always stable and equal to $[0,+\infty)$. If $B$ has finite rank $k$ then $T_B$ has at most $k$ discrete eigenvalues. Also for any $B=B_{11}\oplus 0\equiv\diag(B_{11},0)$ in $L^2(\Sigma)\oplus L^2(\Sigma)\equiv L^2(\Sigma;\C^2)$, the discrete spectrum of $T_B$ is finite. Note that in that case, \eqref{eq:compact_formal} reduces to a Schr\"odinger operator with a $\delta$-potential with the "compact weight" $B_{11}$, 
\begin{equation} \label{eq:B_11_formal}
-\Delta+B_{11}\delta_\Sigma, 
\end{equation}
where the formal potential $B_{11}\delta_\Sigma$ maps a function $f$ with a well-defined Dirichlet trace to the distribution $(B_{11}\gamma_D f)\delta_\Sigma$.
 On the other hand, if the following condition
\begin{equation} \label{equation_S} \tag{S}
 B=\begin{pmatrix}
  B_{11} & B_{12} \\
  B_{21} & B_{22}
 \end{pmatrix} \geq 0 \text{ and } B_{22} \text{ has infinitely many positive eigenvalues}
\end{equation}
is satisfied,  then $\sigma_{\textup{disc}}(T_B)$ is infinite with the only accumulation point at $-\infty$, see Theorem~\ref{SpectrumTB} for details. 

Let us compare our model to others that were treated in the literature.
Allowing $B$ to be just hermitian (and not necessarily compact) one can reproduce the so-called $\delta$-shell and oblique transmission conditions. The former are usually given by 
\begin{equation} \label{eq:deltaTC}
\gamma_D^{+} f_{+}=\gamma_D^{-} f_{-}, \quad \gamma_N^{+} f_{+}-\gamma_N^{-} f_{-}=-\frac{\eta}{2}(\gamma_D^{+} f_{+}+\gamma_D^{-} f_{-}),
\end{equation}
where $\eta$ is a bounded real-valued function and $\gamma_N^\pm$ stand for the Neumann trace operators with respect to $\Omega_\pm$ and the normal vector $n$; cf. \cite{BEKS94,BLL12}. With these conditions one automatically has higher regularity of the functions in the domain of the associated operator $T_{\delta,\eta}$. Since, for functions obeying $\gamma_D^{+} f_{+}=\gamma_D^{-} f_{-}$, the left-hand side of the second condition in \eqref{eq:deltaTC} is equal to $2\bar{\bn} ( \gamma_D^{+} \dzbar f_{+} - \gamma_D^{-} \dzbar f_{-} )$, the operator $B$ corresponding to \eqref{eq:deltaTC} would be $\diag(\eta I, 0)$.  Let us emphasize that the natural formal expression for $T_{\delta,\eta}$ is $-\Delta+\eta \delta_\Sigma$, where $\eta \delta_\Sigma$ should be understood as a multiplication operator, i.e. it corresponds to $B_{11}=\eta$ in \eqref{eq:B_11_formal}. This is very different from \eqref{eq:elementary_formal} with $\beta=\gamma=0$, see also \eqref{def_B_intro}. It is known that $\sigma_{\textup{ess}}(T_{\delta,\eta})=[0,+\infty)$ and the negative spectrum is discrete and finite. In Theorem~\ref{SpectrumTB}~(iii) we will see that this picture is the same for compact coefficients $\eta = B_{11}$, i.e. for $T_B$ in~\eqref{eq:def_TB_alt} with $B = \diag(B_{11}, 0)$.

The oblique transmission conditions were introduced in \cite{BHS23} for $C^\infty$-smooth boundaries and studied only recently also for Lipschitz continuous ones \cite{BeCaPa24}. They can be written for $\eta \in \mathbb{R}$ as
\begin{equation} \label{eq:obliqueTC}
\gamma_D^{+}\dzbar f_{+}=\gamma_D^{-}\dzbar f_{-}, \quad \bn ( \gamma_D^{+}f_{+} - \gamma_D^{-}  f_{-} )=-\eta(\gamma_D^{+}\dzbar f_{+}+\gamma_D^{-}\dzbar f_{-}),
\end{equation}
which means that $B$ in \eqref{eq:TC_sub} would be $\diag(0, -4 \eta I)$. For the corresponding operator $T_{\angle,\eta}$, it was deduced that $\sigma_{\textup{ess}}(T_{\angle,\eta})=[0,+\infty)$. Moreover, $T_{\angle,\eta}$ is a non-negative operator when $\eta\geq 0$. On the other hand, if $\eta<0$, then $\sigma_{\textup{disc}}(T_{\angle,\eta})$ is infinite with the only accumulation point at $-\infty$. Therefore, in the latter case the spectrum of $T_{\angle,\eta}$ exhibits exactly the same unexpected behaviour as the spectrum of $T_B$ with $B$ compact and satisfying the assumption~\eqref{equation_S}. Recalling that this $T_B$ is a smaller perturbation of the free Laplacian than $T_{\angle,\eta}$, as the coefficient in the transmission conditions is compact, this effect is even more surprising in our setting.
We would like to point out that the oblique transmission conditions are different from those corresponding to $\delta'$-interactions as they are studied, e.g., in \cite{BLL12, MPS16}; see also \cite{ER16} for combinations and generalizations of $\delta$- and $\delta'$-interactions, which are also different from the interactions treated in this paper.

To analyze the operator $T_B$ we introduced a new generalized boundary triple $\{L^2(\Gamma;\C^2),\Gamma_0,\Gamma_1\}$ for $\overline{T}$, where
\begin{equation*}
\begin{split}
T f &= ( - \Delta f_{+} ) \oplus ( - \Delta f_{-} ), \\
\text{Dom}(T) &= \left\{ f \in H^{1/2}(\mathbb{R}^2 \setminus \Sigma) \: \big| \: \Delta f_\pm\in L^2(\Omega_\pm),\,\partial_{\overline{z}} f_\pm \in H^{1/2}(\Omega_\pm)\right\}.
\end{split}
\end{equation*}
The boundary mappings $(\Gamma_0,\Gamma_1)$ are chosen exactly in the way that $T_B=T\upharpoonright \Ker (\Gamma_0+B\Gamma_1)$.
We believe that our triple is of independent interest since it may be used to study transmission conditions with jumps in the Dirichlet traces both of the functions and of their Wirtinger derivatives, whereas other triples usually come with the normal instead of the Wirtinger derivative \cite{BLL12, BR15}. For a summary of basic results for generalized boundary triples in an abstract setting see Section~\ref{section_boundary_triples}. In particular, we will present a useful version of the Birman-Schwinger principle and a Krein type resolvent formula in the case when the parameter $B$ factorizes as $B=B_1 B_2$ with $B_2:\mathcal{G}\to\mathcal{K}$ and $B_1:\mathcal{K}\to\mathcal{G}$, where $\mathcal{G}$ is the boundary space and $\mathcal{K}$ is another (\emph{intermediate}) Hilbert space, which were, to the best of our knowledge, not contained in the literature; cf. Theorem \ref{GBT_TB_SA}. In particular, if $\dim(\mathcal{K})<+\infty$ then the Birman-Schwinger operator is also finite-dimensional.

Formal expressions similar to \eqref{eq:elementary_formal} were examined recently in the relativistic setting \cite{HT23}.  Putting the mass term equal to $1/2$ and introducing the speed of light $c>0$ they would look like
\begin{equation*}
	-ic\sum_{i=1}^2 \sigma_i \partial_i + \sigma_3 \frac{c^2}{2} + c|F\delta_\Sigma\rangle\langle G\delta_\Sigma|,
\end{equation*}
where $\{\sigma_i\}_{i=1}^3$ denote the usual Pauli matrices and $F,G\in L^2(\Sigma;\C^{2\times 2})$ are such that for all $f\in L^2(\Sigma;\C^2)$, $F\int_\Sigma (G^*f)=G\int_\Sigma (F^*f)$. The associated self-adjoint operator $D_{F,G}^c$ is defined properly in \eqref{DefNonLocalDirac}. We will look for its non-relativistic limit, i.e. we will study $\lim_{c\to+\infty}(D^c_{F,G}-c^2/2)$ in a well defined sense. In general, the non-relativistic limit provides a connection between the relativistic theory and the (usually well-studied) non-relativistic theory and thereby allows an interpretation of the relativistic model.  The procedure of the non-relativistic limit for the case of a regular perturbation is discussed in details in \cite[Chap. 6]{Thaller} and for usual (\emph{local}) $\delta$-point and $\delta$-shell interactions in \cite{BEHL18, BHS23, BD94,HT22}. There it was observed that to get a convergence as well as a limit which non-trivially depends on all interaction parameters, one has to scale these parameters appropriately. In our case, we will put $F_c:=S_cF$ and $G_c:=S_c G$ with $S_c:=\diag (c^{-1/2},c^{1/2})$. Then we will show in Theorem~\ref{theorem-nr lim} that, for every $w\in\C\setminus\R$,
\begin{equation*}
\left\lVert\left(D_{F_c,G_c}^c-w -\frac{c^2}{2} \right)^{-1}-(T_B-w)^{-1}\otimes \begin{pmatrix}
1 & 0\\
0 & 0
\end{pmatrix}\right\rVert = \mathcal{O}(c^{-1}),
\end{equation*}
where $B$ is the finite rank operator that acts as $Bf=VF\int_\Sigma (G^*V^* f)$ with $V:=\diag (1,-2i)$. In particular, this shows that the Schr\"odinger operators with non-local interactions in~\eqref{eq:elementary_formal} are indeed the non-relativistic counterpart of the relativistic models with non-local interactions studied in \cite{HT23}.

The paper is organized as follows. Section~\ref{section_preliminaries} is of preliminary nature. After collecting some notations in Section~\ref{section_notations}, Sobolev spaces and suitable trace theorems are discussed in Section~\ref{section_function_spaces}. Notations on distributions are summarized in Section~\ref{section_distributions}. In Section~\ref{section_int.oper.} some frequently used material on integral operators is presented, and in Section~\ref{section_boundary_triples} abstract definitions and results on generalized boundary triples are provided. Section~\ref{section_T_B} is devoted to the analysis of $T_B$ defined in~\eqref{eq:def_TB_alt}. In Section~\ref{section_GBT_T_B} a generalized boundary triple that is suitable for the study of $T_B$ is introduced. This is used in Section~\ref{section_def_selfadjointness} to show the self-adjointness of $T_B$, while the spectral properties of $T_B$ are studied in Section~\ref{section_spectrum}. Finally, in Section~\ref{section_nrl} we show that $T_B$ is the non-relativistic limit of Dirac operators with non-local singular interactions.

\section{Preliminaries} \label{section_preliminaries}

This section is devoted to some preliminary material that is used throughout the paper. First, in Section~\ref{section_notations} we collect some frequently used notations. Section~\ref{section_function_spaces} is devoted to function spaces and trace theorems, and in Section~\ref{section_distributions} we introduce some notations concerning distributions. In Section~\ref{section_int.oper.} we summarize some useful results for integral operators. Finally, in Section~\ref{section_boundary_triples} we give a summary about the theory of generalized boundary triples.

\subsection{Notation} \label{section_notations}
Let $\Omega_+$ be an open bounded set in $\R^2$ with at least Lipschitz continuous boundary $\partial\Omega_+ =\Sigma$ as described in Section~\ref{section_function_spaces}. The unit outer normal vector field and the unit tangent field along $\Sigma$ will be denoted by $n=(n_1,n_2)$ and $t=(t_1,t_2)$, respectively. We put $t=(-n_2,n_1)$, so that the basis $(n,t)$ has the standard orientation of $\R^2$, and $\Omega_-:=\R^2\setminus\overline{\Omega_+}$.   We denote the restriction of a function $f : \mathbb{R}^2 \to \mathbb{C}$ to $\Omega_\pm$ by $f_\pm$. The complexified vector $x=(x_1,x_2)\in\R^2$ will be denoted by $\mathbf{x}= x_1+i x_2\in\C$. Furthermore, the Wirtinger derivatives are given by $\partial_z  = \tfrac{1}{2}( \partial_1 - i\partial_2)$ and $\partial_{\overline{z}} = \tfrac{1}{2} (\partial_1 + i \partial_2)$. In the following we will always choose the square root of a complex number $w \in \mathbb{C} \setminus [0,+\infty)$ in such a way that $\textup{Im }\sqrt{w}  > 0$ applies. Eventually, when convenient $C$ denotes a generic constant whose value may change between different lines.

For a Hilbert space $\mathcal{H}$ we denote the associated inner product by $\langle \cdot, \cdot \rangle_{\mathcal{H}}$ and we use the convention that it is anti-linear in the first entry and linear in the second argument. If $\mathcal{H}$, $\mathcal{G}$ are Hilbert spaces and $A$ is a linear operator from $\mathcal{H}$ to $\mathcal{G}$, then its domain, range, and kernel are denoted by $\Dom(A)$, $\Ran(A)$, and $\Ker(A)$, respectively. For a closed operator $A$ in a Hilbert space $\mathcal{H}$ the resolvent set, spectrum,  and point spectrum  are  $\rho(A)$, $\sigma(A)$, and  $\sigma_\textup{p}(A)$. Moreover, if  $A$ is self-adjoint, then  we denote the discrete and essential spectrum by $\sigma_\textup{disc}(A)$ and $\sigma_\textup{ess}(A)$.

\subsection{Curves, function spaces, and trace theorems} \label{section_function_spaces}

Throughout this paper, we assume that $\Sigma$ is a compact non-self-intersecting \textit{bi-Lipschitz}  curve. By this, we mean that there exists an injective Lipschitz continuous arc-length parametrization $\zeta :  [0,L) \to \mathbb{R}^2$, where $L$ denotes the length of $\Sigma$, extended to $\mathbb{R}$ via periodicity, such  that $\zeta( [0, L)) =\Sigma$, $|\dot{\zeta}| =1 $ a.e. on $[0, L)$, and there exists a constant $C_\zeta>0$ such that 
	\begin{equation}\label{eq_bi_Lipschitz}
		C_\zeta |t-s | \leq   |\zeta(t) - \zeta(s)|  \leq |t-s|\qquad \forall t\in\R,\, \forall s \in   \overline{B(t,L/2)}.
	\end{equation}

\begin{remark} \label{rem_bi_Lipschitz}
If the extended function $\zeta$ is $C^2$-smooth, then \eqref{eq_bi_Lipschitz} is always true. The second inequality is clear even for Lipschitz continuous functions. To show the first inequality, define the function 
\begin{equation*}
r(s,t):=\begin{cases} \frac{|\zeta(t)-\zeta(s)|}{|t-s|}, & t\neq s,\\
		1, & t=s.
		\end{cases}
\end{equation*}
Employing the Taylor expansion, one checks that $r$ is continuous on $\R^2$. Moreover, $r$ is positive  on $\mathscr{M}:=\{(s,t)|\, t\in\R,\, s\in \overline{B(t,L/2)}\}$, due to the injectivity of $\zeta$ on $[0,L)$. Put $\mathscr{M}_L:=\{(s,t)|\, t\in[0,L],\, s\in \overline{B(t,L/2)}\}$. Taking into account also the periodicity of $\zeta$ and the compactness of $\mathscr{M}_L$, we get that ~\eqref{eq_bi_Lipschitz} is true with
$C_\zeta=\inf_{\mathscr{M}}r=\inf_{\mathscr{M}_L}r=\min_{\mathscr{M}_L}r>0$. 
\end{remark}

To shorten notation we use the expression  $\langle\cdot,\cdot\rangle_M$ instead of $\langle\cdot,\cdot\rangle_{L^2(M)}$ for the usual scalar product on $L^2(M)$, which is anti-linear in the first and linear in the second argument, where $M$ is either an open set in $\R^2$ or $\Sigma$; in the latter case the $L^2$ space is equipped with the standard arc-length measure $\sigma$.   The symbol $H^s(M)$ denotes the classical  Sobolev space of order $s \in \R$, where $M$ is an open subset of $\R^2$ or a sufficiently smooth curve $\Sigma$; cf. \cite[Chap.~3]{M00}. For $M= \Sigma$ one has then that
$H^s(\Sigma)$ is the dual space of $H^{-s}(\Sigma)$, which will be used later.

In the following, we introduce some further function spaces that will be used in several places in the course of the paper and on which extensions of the Dirichlet trace operator can be constructed. Define for $s \geq 0$ the Sobolev space
\begin{equation*}
H^{s}_\Delta (\Omega_\pm) := \left\{ f \in H^s(\Omega_\pm) \: \big| \: \Delta f \in L^2(\Omega_\pm) \right\}
\end{equation*}
and equip it with the norm $\|f\|^2_{H^{s}_\Delta (\Omega_\pm)} := \| f\|_{H^s(\Omega_\pm)}^2 + \| \Delta f\|_{L^2(\Omega_\pm)}^2$. Then,  $H^{s}_\Delta (\Omega_\pm)$ is a Hilbert space and $C^\infty_0(\overline{\Omega_\pm})$, which is the set of all functions mapping from $\Omega_\pm$ to $\C$ that have compactly supported $C^\infty$ extensions to $\R^2$, is dense in $H^s_\Delta(\Omega_\pm)$. This follows for $\Omega_+$ from \cite[Lem.~2.13]{BGM23} and can be proven in the same way for $\Omega_-$.  We also define the function spaces $H_{\dzbar}^{s}(\Omega_\pm)$ and  $H_{\dz}^{s}(\Omega_\pm)$ for $s \geq 0$ by
\begin{equation} \label{eq_Hdz_bar}
H_{\dzbar}^{s}(\Omega_\pm) := \left\{ f \in H^{s}(\Omega_\pm) \: \big| \: \dzbar f \in L^2(\Omega_\pm) \right\} 
\end{equation}
and
\begin{equation} \label{eq_Hdz}
H_{\dz}^{s}(\Omega_\pm) := \left\{ f \in H^{s}(\Omega_\pm) \: \big| \: \dz f \in L^2(\Omega_\pm) \right\},
\end{equation}
endowed with the norms 
\begin{equation*}
  \| f \|_{H_{\dz}^s(\Omega_\pm)}^2 := \| f\|_{H^s(\Omega_\pm)}^2 + \| \dz f\|_{L^2(\Omega_\pm)}^2
\end{equation*}
and 
\begin{equation*}
  \| f \|_{H_{\dzbar}^s(\Omega_\pm)}^2 := \| f\|_{H^s(\Omega_\pm)}^2 + \| \dzbar f\|_{L^2(\Omega_\pm)}^2.
\end{equation*}
Furthermore, we identify
\begin{equation}\label{def_H_s_Delta_Sigma}
H^s_\Delta(\mathbb{R}^2\setminus \Sigma)=H^s_\Delta(\Omega_+)\oplus H^s_\Delta(\Omega_-).
\end{equation}
Again, it can be shown as in \cite[Lem.~2.1]{BFSB17} that $H_{\dz}^s(\Omega_{\pm})$ and $H_{\dzbar}^s(\Omega_{\pm})$  are Hilbert spaces and that  the set $C_{0}^{\infty}(\overline{\Omega_{\pm}})$ is dense in $H_{\dz}^s(\Omega_{\pm})$ and $H_{\dzbar}^s(\Omega_{\pm})$. This observation allows to extend the continuous trace operator $\gamma_D^{\pm} : H^1(\Omega_{\pm}) \to H^{1/2}(\Sigma)$.

\begin{lemma} \label{Trace_Extension}
Let $s \in [\frac{1}{2},\frac{3}{2}]$. Then the mapping $C_0^{\infty}(\overline{\Omega_{\pm}}) \ni f \mapsto f |_{\Sigma}$ can be extended to linear and bounded operators 
\begin{equation*} 
\gamma_D^{\pm} : H_{\dzbar}^{s}(\Omega_{\pm}) \to H^{s-1/2}(\Sigma) \quad \text{and} \quad \gamma_D^{\pm} : H_{\dz}^{s}(\Omega_{\pm}) \to H^{s-1/2}(\Sigma).
\end{equation*}
Furthermore, for all $f \in  H_{\dzbar}^{s}(\Omega_{\pm})$ and all $g \in H_{\dz}^{s}(\Omega_{\pm})$ there holds 
\begin{equation} \label{GreenH12}
\langle \dzbar f , g \rangle_{\Omega_{\pm}} = - \langle f , \dz g \rangle_{\Omega_{\pm}} \pm \frac{1}{2} \langle  \bn \gamma_D^{\pm} f , \gamma_D^{\pm} g \rangle_{\Sigma}.
\end{equation}
If $\Sigma$ is a $C^{k}$-boundary with $k \geq 2$ then the above mapping properties remain valid for all $s \in [\tfrac{3}{2}, k]$. 
\end{lemma}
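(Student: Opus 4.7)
My approach follows the scheme of \cite[Lem.~2.1]{BFSB17}, which proves the analogous statement for the second-order space $H^s_\Delta(\Omega_\pm)$, adapted to the first-order Cauchy--Riemann operators. Since density of $C_0^\infty(\overline{\Omega_\pm})$ in $H^s_{\dzbar}(\Omega_\pm)$ and in $H^s_{\dz}(\Omega_\pm)$ has already been recorded just before the lemma, the task reduces to proving the stated trace bounds on smooth functions and then extending by continuity. Green's identity \eqref{GreenH12} itself is verified for $f,g\in C_0^\infty(\overline{\Omega_\pm})$ by rewriting
\[
\langle\dzbar f,g\rangle_{\Omega_\pm}+\langle f,\dz g\rangle_{\Omega_\pm}=\int_{\Omega_\pm}\dz(\overline{f}g)\,dx,
\]
and applying the divergence theorem to the real and imaginary parts; using $2\dz=\partial_1-i\partial_2$, $\bar{\bn}=n_1-in_2$, and the fact that the outer normal to $\Omega_-$ is $-n$, one recovers the $\pm\tfrac12\langle\bn\gamma_D^\pm f,\gamma_D^\pm g\rangle_\Sigma$ boundary term.

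The endpoint $s=\tfrac32$ is immediate, since $H^{3/2}_{\dzbar}(\Omega_\pm)\subset H^{3/2}(\Omega_\pm)$ and the classical trace theorem for Lipschitz boundaries (\cite[Ch.~3]{M00}) already supplies $\gamma_D^\pm:H^{3/2}(\Omega_\pm)\to H^1(\Sigma)$. The critical endpoint is $s=\tfrac12$, where one needs $\|\gamma_D^\pm f\|_{L^2(\Sigma)}\le C\|f\|_{H^{1/2}_{\dzbar}(\Omega_\pm)}$ on smooth $f$. My plan is a Cauchy--Pompeiu decomposition $f=h+h'$ with
\[
h(\mathbf{x}):=-\frac{1}{\pi}\int_{\Omega_\pm}\frac{\dzbar f(\mathbf{w})}{\mathbf{w}-\mathbf{x}}\,dA(\mathbf{w}),\qquad h':=f-h\ \text{holomorphic in}\ \Omega_\pm.
\]
The Cauchy-transform piece satisfies $\|h\|_{H^1(\Omega_\pm)}\le C\|\dzbar f\|_{L^2(\Omega_\pm)}$ via $L^2(\R^2)$-boundedness of the Beurling transform (for $\dz h$) together with standard estimates on the Cauchy kernel against compactly supported data (for $\|h\|_{L^2}$, with a cutoff argument on the unbounded component $\Omega_-$); thus $\gamma_D^\pm h\in H^{1/2}(\Sigma)\subset L^2(\Sigma)$. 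The holomorphic remainder $h'\in H^{1/2}(\Omega_\pm)$ is handled via Hardy-space theory on Lipschitz domains, which identifies $H^{1/2}(\Omega_\pm)$-holomorphic functions with the Hardy space $H^2(\Omega_\pm)$ and yields boundary values in $L^2(\Sigma)$ with $\|\gamma_D^\pm h'\|_{L^2(\Sigma)}\le C\|h'\|_{H^{1/2}(\Omega_\pm)}$. Summing the two contributions and invoking density gives the required extension. The intermediate range $s\in(\tfrac12,\tfrac32)$ is then covered by complex interpolation, and the $C^k$ strengthening for $s\in[\tfrac32,k]$ follows directly from the classical trace theorem on smoother domains.

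The $H^s_{\dz}$ statement is inherited from the $H^s_{\dzbar}$ statement by complex conjugation, using $\overline{\gamma_D^\pm f}=\gamma_D^\pm\overline{f}$ and $\overline{\dz f}=\dzbar\overline{f}$. With both trace maps now continuous on the full spaces, Green's formula \eqref{GreenH12} extends from $C_0^\infty(\overline{\Omega_\pm})$ to all $(f,g)\in H^s_{\dzbar}(\Omega_\pm)\times H^s_{\dz}(\Omega_\pm)$ by passing to the limit in the density.

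The principal obstacle lies in the $L^2(\Sigma)$ bound at $s=\tfrac12$. A routine duality argument testing Green's formula against $g\in H^1(\Omega_\pm)$ lifted from $\gamma_D^\pm g\in H^{1/2}(\Sigma)$ only delivers $\gamma_D^\pm f\in H^{-1/2}(\Sigma)$; the full-derivative upgrade to $L^2(\Sigma)$ must come from a genuinely complex-analytic input, captured here by the Hardy-space behaviour of the holomorphic component in the Cauchy--Pompeiu splitting.
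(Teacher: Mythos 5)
Your proof takes a genuinely different route from the paper. The paper reduces the first-order problem to a known second-order one: since $\Delta f = 4\dz\dzbar f$ and $\dz : L^2(\Omega_\pm) \to H^{-1}(\Omega_\pm)$ is bounded, the hypothesis $\dzbar f \in L^2(\Omega_\pm)$ yields $\Delta f \in H^{-1}(\Omega_\pm)$, so $H^s_{\dzbar}(\Omega_\pm)$ embeds continuously into the space $H^{s,-1}_\Delta(\Omega_\pm) := \{f \in H^s : \Delta f \in H^{-1}\}$; the trace theorem for the latter, valid uniformly in $s \in [\tfrac12, \tfrac32]$ on Lipschitz domains, is taken from \cite[Thm.~3.6]{BGM23}. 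This is short and avoids any endpoint-plus-interpolation argument. Your Cauchy--Pompeiu decomposition into a Beurling-transform piece and a holomorphic Hardy-space piece is a perfectly sensible alternative in spirit, and makes the complex-analytic structure more visible, but it is considerably heavier.

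There is, however, a real gap in the interpolation step. Having the bounds at $s=\tfrac12$ and $s=\tfrac32$, complex interpolation gives boundedness of $\gamma_D^\pm$ from $[H^{1/2}_{\dzbar},H^{3/2}_{\dzbar}]_{\theta}$ to $H^{\theta}(\Sigma)$ for $\theta = s-\tfrac12$, but you also need the identification (or at least the embedding) $H^s_{\dzbar}(\Omega_\pm) \hookrightarrow [H^{1/2}_{\dzbar}(\Omega_\pm), H^{3/2}_{\dzbar}(\Omega_\pm)]_{s-1/2}$. The spaces $H^s_{\dzbar}$ are intersection-type domain spaces, and interpolation of such spaces does not commute with intersections in general; this requires a retraction/coretraction structure or a dedicated argument, neither of which you supply. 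The paper sidesteps this entirely by applying a trace theorem that already covers the whole range $s\in[\tfrac12,\tfrac32]$. Two further technicalities you acknowledge only in passing would also need to be spelled out: (a) the cutoff argument making the Cauchy transform well-defined and $L^2$-controlled on the unbounded component $\Omega_-$ (the raw Cauchy transform of non-compactly-supported $L^2$ data on $\Omega_-$ need not converge absolutely, and even for compactly supported data the transform decays only like $|x|^{-1}$, which is not square-integrable near infinity in $\mathbb{R}^2$); and (b) a citable Hardy-space theorem on Lipschitz domains (including exterior domains) ensuring that $H^{1/2}$-holomorphic functions have $L^2(\Sigma)$ nontangential boundary values with norm control. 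These are all reasonable to expect to be true, but as written they are gaps, and the interpolation one is the most serious.
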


\begin{proof}
We show that the trace operator can be extended to a bounded map $\gamma_D^{\pm} : H_{\dzbar}^{s}(\Omega_{\pm}) \to H^{s-1/2}(\Sigma)$, the claim for $H_{\dz}^{s}(\Omega_{\pm})$ can be shown in the same way. For this, we use similar arguments as in the proof of \cite[Lem.~4.1]{BHSLS23}. Let $s \in [\frac{1}{2},\frac{3}{2}]$ be fixed. Define the space
\begin{equation*}
H^{s,-1}_\Delta (\Omega_{\pm}) := \left\{ f \in H^{s}(\Omega_{\pm}) \: \big| \: \Delta f \in H^{-1}(\Omega_{\pm}) \right\}
\end{equation*}
and equip it with the norm $\| f \|_{H^{s,-1}_\Delta (\Omega_{\pm})}^2 := \| f \|_{H^{s}(\Omega_{\pm})}^2 + \| \Delta f \|_{H^{-1}(\Omega_{\pm})}^2$. Then it follows from \cite[Thm.~3.6]{BGM23} that there exists a continuous extension $\gamma_D^{\pm} : H^{s,-1}_\Delta (\Omega_{\pm}) \to H^{s-1/2}(\Sigma)$ of the trace operator.  Due to $4 \dz \dzbar = \Delta$ and the continuity of the operator $\dz : L^2(\Omega_{\pm}) \to H^{-1}(\Omega_{\pm})$, the space $H^{s}_{\dzbar}(\Omega_{\pm})$ is continuously embedded in $H^{s,-1}_{\Delta}(\Omega_{\pm})$. Thus, the Dirichlet trace admits the claimed continuous extension defined on $H_{\dzbar}^{s}(\Omega_{\pm})$. The Green's identity \eqref{GreenH12} follows directly from the density of $C_0^{\infty}(\overline{\Omega_{\pm}})$ in $H^{s}_{\dzbar}(\Omega_{\pm})$ and $H^{s}_{\dz}(\Omega_{\pm})$, the continuity of the extended Dirichlet trace operator, and the classical integration by parts formula.
Finally, the claim for $C^k$-boundaries follows from the classical trace theorem, see, e.g., \cite[Thm.~3.37]{M00}.
\end{proof}

We will also need a suitable realization of the Neumann trace.  By \cite[Lem.~4.3]{M00} there exists a bounded operator $\gamma_N^{\pm} : H^1_{\Delta}(\Omega_{\pm}) \to H^{-1/2}(\Sigma)$ such that for all $f \in H^1_{\Delta}(\Omega_{\pm})$ and all $g \in H^1(\Omega_{\pm})$,
\begin{equation*} 
\langle \nabla f  , \nabla g \rangle_{\Omega_{\pm}} + \langle \Delta f , g \rangle_{\Omega_{\pm}} = \pm \big\langle \gamma_N^{\pm} f, \gamma_D^{\pm} g \big\rangle_{H^{-1/2}(\Sigma) \times H^{1/2}(\Sigma)}
\end{equation*}
is satisfied, where $\langle \cdot, \cdot \rangle_{H^{-1/2}(\Sigma) \times H^{1/2}(\Sigma)}$ denotes the sesquilinear duality product in $H^{-1/2}(\Sigma) \times H^{1/2}(\Sigma)$.  Note that by \cite[Thm.~5.4]{BGM23} one has for all $f \in H^{3/2}_\Delta(\Omega_{\pm})$ the relation $\gamma_N^{\pm} f = \nu \cdot \gamma_D^{\pm} \nabla f$ and $\gamma_N^\pm: H^{3/2}_\Delta(\Omega_\pm) \rightarrow L^2(\Sigma)$ is continuous. With an interpolation argument, see, e.g., \cite[Thms.~5.1\&14.3]{LM72}, it follows that $\gamma_N^{\pm} : H^s_{\Delta}(\Omega_{\pm}) \to H^{s-3/2}(\Sigma)$ is continuous for all $s \in [1,\frac{3}{2}]$.

\subsection{Distributions}\label{section_distributions}

The space of all test functions in an open set $M \subset \mathbb{R}^2$ is denoted by $\mathcal{D}(M)$, the symbol $\mathcal{D}'(M)$ is used for the distributions in $M$.
The pairing between the distributions and the test functions will be denoted by $(\cdot,\cdot)$. Note that this bracket is linear with respect to both arguments.

The symbol $\delta_\Sigma$ will stand for the single layer distribution supported on the hypersurface $\Sigma$. Furthermore, for $g \in L^2(\Sigma)$ we identify the distributions $g \delta_\Sigma$, $\partial_z (g\delta_\Sigma)$, and $\partial_{\overline{z}} (g \delta_\Sigma)$ with the inner product on $L^2(\Sigma)$ in the following manner:
\begin{equation*}
\left.  \begin{split}
    (g \delta_\Sigma,\varphi) &:= \langle \overline{g}, \varphi \upharpoonright \Sigma \rangle_\Sigma, \\
    (\partial_z(g \delta_\Sigma),\varphi) &:= - \langle \overline{g},\partial_z \varphi \upharpoonright \Sigma\rangle_\Sigma,  \\
    (\partial_{\overline{z}}(g \delta_\Sigma),\varphi) &:= - \langle \overline{g},\partial_{\overline{z}} \varphi \upharpoonright \Sigma\rangle_\Sigma.
  \end{split}\, \right\}\quad\varphi \in \mathcal{D}(\R^2)
\end{equation*}
We will do the same for $\mathbb{C}^2$-valued test functions, i.e., for $G\in L^2(\Sigma;\mathbb{C}^{2 \times 2})$ the distribution $G\delta_\Sigma$ will be defined as 
\begin{equation*}
  (G\delta_\Sigma,\varphi):=\int_\Sigma G (\varphi \upharpoonright \Sigma) \dd \sigma, \quad  \varphi\in\mathcal{D}(\mathbb{R}^2;\mathbb{C}^2) .
\end{equation*}
Note that the distributions above can be extended, similarly as the $\delta$-distribution in \cite{Kur96}, to every  (possibly $\mathbb{C}^2$-valued) function $\varphi$ such that $\varphi_\pm := \varphi \upharpoonright \Omega_\pm$ admit well-defined square integrable Dirichlet traces:

\begin{equation*}
\begin{split}
(g\delta_\Sigma,\varphi)&:= \int_\Sigma g\frac{\gamma_D^+\varphi_+ +\gamma_D^-\varphi_-}{2} \dd \sigma,\\
(\dz(g\delta_\Sigma),\varphi)&:= -\int_\Sigma g\frac{\gamma_D^+(\dz\varphi_+)+\gamma_D^-(\dz\varphi_-)}{2} \dd \sigma,\\
(\dzbar(g\delta_\Sigma),\varphi)&:= -\int_\Sigma g\frac{\gamma_D^+(\dzbar\varphi_+)+\gamma_D^-(\dzbar\varphi_-)}{2}\dd \sigma,
\end{split}
\end{equation*}
and
$$(G\delta_\Sigma,\varphi):=\int_\Sigma G \frac{\gamma_D^+\varphi_+ +\gamma_D^-\varphi_-}{2} \dd \sigma.$$

Moreover, we will extend the usual physical \textit{bra-ket} notation to certain distributions. Let $f,g\in L^2(\Sigma)$ and $F,G\in L^2(\Sigma;\mathbb{C}^{2 \times 2})$ then for every $\varphi$ as above we set

\begin{align*}
|f\delta_\Sigma\rangle\langle g\delta_\Sigma|\varphi &:= (\overline{g}\delta_\Sigma, \varphi)f\delta_\Sigma,\\
|f\delta_\Sigma\rangle\langle\dz (g\delta_\Sigma)|\varphi &:= (\dzbar(\overline{g}\delta_\Sigma), \varphi)f\delta_\Sigma,\\
|\dz(f\delta_\Sigma)\rangle\langle g\delta_\Sigma|\varphi &:= (\overline{g}\delta_\Sigma, \varphi)\dz(f\delta_\Sigma),\\
|\dz(f\delta_\Sigma)\rangle\langle \dz (g\delta_\Sigma)|\varphi &:= (\dzbar(\overline{g}\delta_\Sigma), \varphi)\dz(f\delta_\Sigma), \\
|F\delta_\Sigma\rangle\langle G\delta_\Sigma|\varphi &:= F(G^\ast\delta_\Sigma,\varphi)\delta_\Sigma.
\end{align*}

\subsection{Integral operators}\label{section_int.oper.}

	In this section we collect several useful results about integral operators and their mapping properties. We start with two abstract results which are basically consequences of the Schur test (see \cite[Ex.~III~2.4]{kato} or \cite[Thm.~6.24]{W80}) and that will be used frequently, in particular in the Sections~\ref{section_spectrum}\&\ref{section_nrl}. First, we discuss the boundedness of certain boundary integral operators. Recall that $\Sigma$ is always a compact non-self-intersecting bi-Lipschitz smooth curve of length $L$ as described at the beginning of Section~\ref{section_function_spaces}.

	\begin{proposition}\label{prop.-sing.int.op.on Sigma}
		Let $a:\mathbb{R}^2\to \mathbb{C}$ be a measurable function, $L$ be the length of $\Sigma$,  $C_\zeta >0$ be the constant from~\eqref{eq_bi_Lipschitz}, and  $\alpha: (0,+\infty) \to (0,+\infty)$   be a monotonically decreasing   function  such that $\alpha$ is integrable on $(0, \frac{C_\zeta L}{2})$. Moreover, assume that for almost all $x \in \R^2\setminus\{0\}$ with $|x | \leq  \tfrac{L}{2}$
		\begin{equation}\label{eq.-sing.int.op.on Sigma}
			|a(x)| \leq \alpha(|x|) 
		\end{equation}
		is fulfilled.
		Then the linear operator $A: L^2(\Sigma) \to L^2(\Sigma)$ defined by 
		\begin{equation*}
			\begin{aligned}
				(A \varphi )(x_\Sigma) &= \int_{\Sigma}a(x_\Sigma-y_\Sigma) \varphi(y_\Sigma) \dd \sigma(y_\Sigma), \quad &\varphi \in L^2 (\Sigma),\, x_\Sigma \in \Sigma,
			\end{aligned}
		\end{equation*}
		is well-defined and for the operator norm the  estimate
		\begin{equation*}
			\begin{aligned}
				\| A\| & \leq 2 C_\zeta^{-1} \int_0^{C_\zeta L/2} \alpha(s) \dd s
			\end{aligned}
		\end{equation*}
		is valid.
	\end{proposition}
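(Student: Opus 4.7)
The plan is to apply the Schur test (as cited, \cite[Thm.~6.24]{W80}) to the integral kernel $k(x_\Sigma, y_\Sigma) := a(x_\Sigma - y_\Sigma)$ on $\Sigma \times \Sigma$. Since the required bound involves only $|a|$ and is symmetric in its two arguments, showing one uniform $L^1$-row-bound is enough; the column bound follows by the same calculation applied to the transpose.

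First I would pass to the arc-length parametrization $\zeta$. Fix $x_\Sigma = \zeta(t)$ and write $\int_\Sigma |a(x_\Sigma-y_\Sigma)|\,\dd\sigma(y_\Sigma)$ as $\int_{t-L/2}^{t+L/2}|a(\zeta(t)-\zeta(s))|\,\dd s$, using the periodicity of $\zeta$ so that $s$ ranges over a fundamental domain centered at $t$. By the bi-Lipschitz inequality \eqref{eq_bi_Lipschitz}, $|\zeta(t)-\zeta(s)| \leq |t-s| \leq L/2$ on this range, so the assumption \eqref{eq.-sing.int.op.on Sigma} applies and gives $|a(\zeta(t)-\zeta(s))|\leq \alpha(|\zeta(t)-\zeta(s)|)$. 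Using again the lower bound $|\zeta(t)-\zeta(s)|\geq C_\zeta|t-s|$ together with the assumed monotonicity of $\alpha$ (which is the crucial place where \emph{decreasing} is used), this is further bounded by $\alpha(C_\zeta|t-s|)$.

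Substituting $u = s-t$ and then $v = C_\zeta u$, the integral becomes
\begin{equation*}
\int_{-L/2}^{L/2}\alpha(C_\zeta |u|)\,\dd u \;=\; \frac{2}{C_\zeta}\int_0^{C_\zeta L/2}\alpha(v)\,\dd v,
\end{equation*}
which is finite by the integrability hypothesis on $\alpha$ and, importantly, independent of $t$. Hence the Schur test yields both well-definedness of $A$ on $L^2(\Sigma)$ and the claimed norm bound.

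The only mildly delicate point is the reduction of the integration domain of $s$ to an interval of length $L$ centered at $t$, so that the bi-Lipschitz estimate \eqref{eq_bi_Lipschitz} and the hypothesis \eqref{eq.-sing.int.op.on Sigma} (which require $|x|\leq L/2$ on the Euclidean side and $s\in\overline{B(t,L/2)}$ on the parameter side) are simultaneously applicable; periodicity of $\zeta$ makes this legitimate since $\zeta([0,L))=\Sigma$ is traversed exactly once. Beyond that, the proof is a routine Schur-test calculation; no truly hard step is expected.
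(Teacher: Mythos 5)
Your argument is correct and follows the same route as the paper: the Schur test applied after reducing to the arc-length parametrization, using periodicity to center the integration interval, the bi-Lipschitz bound to replace $|\zeta(t)-\zeta(s)|$ by $C_\zeta|t-s|$, and the monotonicity of $\alpha$. The paper phrases the reduction via the explicit isometric isomorphism $\iota_\zeta:\varphi\mapsto\varphi\circ\zeta$, but the computation is identical, and your explicit check that $|\zeta(t)-\zeta(s)|\le|t-s|\le L/2$ (so that \eqref{eq.-sing.int.op.on Sigma} is applicable) is a welcome clarification.
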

	\begin{proof}
		We start by  introducing the operator
		\begin{equation} \label{def_iota_zeta}
			\iota_{\zeta}: L^2(\Sigma) \to L^2((0,L)), \qquad \varphi \mapsto \varphi \circ \zeta.
		\end{equation}
		Since  $|\dot{\zeta}| = 1$ a.e., $\iota_\zeta$ is  an isometric isomorphism. Thus, $A$ is well-defined if and only if $\widetilde{A} := \iota_{\zeta}A\iota_{\zeta}^{-1}:  L^2((0,L)) \to L^2((0,L))$ is well-defined. A transformation of coordinates shows that $\widetilde{A}$ acts as
		\begin{equation*}
			(\widetilde{A} f)(t) = \int_0^L a(\zeta(t) - \zeta(s)) f(s) \dd s, \qquad f \in L^2((0,L)),~t \in (0,L).
		\end{equation*}
		Define the number $K \in [0, +\infty]$ by
		\begin{equation} \label{eq_Schur_test_1}
			K := \bigg(\sup_{t \in (0,L) } \int_0^L \alpha(|\zeta(t) - \zeta(s)|)\, \dd s\bigg)^{1/2} \bigg(\sup_{s \in (0,L)}  \int_0^L \alpha(|\zeta(t) - \zeta(s)|)\, \dd t\bigg)^{1/2}.
		\end{equation}
		Then, by \eqref{eq.-sing.int.op.on Sigma} and the Schur test, see, e.g., \cite[Ex.~III~2.4]{kato} or \cite[Thm.~6.24]{W80},  $\widetilde{A}$ is well-defined if $K < +\infty$, and in this case $\| A \| = \| \widetilde{A} \| \leq K$.
		Thus, it remains to estimate the constant $K$ in~\eqref{eq_Schur_test_1}. For $t \in (0,L)$
		the periodicity of $\zeta$, \eqref{eq_bi_Lipschitz}, and the monotonicity of $\alpha$ yield
		\begin{equation*}
			\begin{aligned}
				\int_0^L \alpha(|\zeta(t) - \zeta(s)|) \dd s &= \int_{t-L/2}^{t+L/2}  \alpha(|\zeta(t) - \zeta(s)|) \dd s\\
				& \leq \int_{t-L/2}^{t+L/2} \alpha(C_\zeta |t-s|)  \dd s\\
				& = 2C_\zeta^{-1} \int_0^{C_\zeta L/2} \alpha(s) \dd s.
			\end{aligned}
		\end{equation*}
		Since the latter estimate is independent of $t$, we conclude that
		\begin{equation*}
			\bigg(\sup_{t \in (0,L)}\int_0^L |\alpha(\zeta(t) - \zeta(s))| \dd s\bigg)^{1/2} \leq \bigg(2C_\zeta^{-1} \int_0^{C_\zeta L/2} \alpha(s) \dd s\bigg)^{1/2}.
		\end{equation*}
		By symmetry, the second term in \eqref{eq_Schur_test_1} can be estimated in the same way. Hence, we conclude that the constant $K$ in~\eqref{eq_Schur_test_1} can be estimated by
		\begin{equation*}
            K \leq 2 C_\zeta^{-1} \int_0^{C_\zeta L/2} \alpha(s) \dd s,
		\end{equation*}
		which finishes the proof of this proposition.
	\end{proof}

    The next proposition is a counterpart of Proposition~\ref{prop.-sing.int.op.on Sigma} for potential operators.

	\begin{proposition}\label{prop.-pot.int.op.}
		Let $a:\mathbb{R}^2\to \mathbb{C}$ be measurable, $L$ be the length of the curve $\Sigma$,  $C_\zeta >0$ be the constant from \eqref{eq_bi_Lipschitz}, $\theta \in [0,1)$, and  $\alpha: (0,+\infty) \to (0,+\infty)$   be a monotonically decreasing   function  such that $\alpha^{2\theta}$ and $ (0,+\infty) \ni r \mapsto (\alpha(r))^{2(1-\theta)}r$ are integrable on $(0,C_\zeta \frac{L}{4})$ and $(0,+\infty)$, respectively. Moreover, assume that for almost all $x \in \R^2\setminus\{0\}$
		\begin{equation}\label{eq.-pot.int.op.}
			|a(x)| \leq \alpha(|x|)
		\end{equation}
		is fulfilled.
		Then the linear operator $A:L^2(\Sigma)\to L^2(\mathbb{R}^2)$ defined by 
		\begin{equation*}
			\begin{aligned}
				(A \varphi )(x) &= \int_{\Sigma}a(x-y_\Sigma) \varphi(y_\Sigma) \dd \sigma(y_\Sigma), \quad &\varphi \in L^2 (\Sigma), x \in \mathbb{R}^2,
			\end{aligned}
		\end{equation*}
		is well-defined and for the operator norm the  estimate
		\begin{equation*}
			\begin{aligned}
				\| A \| &\leq  \bigg(\frac{4}{C_\zeta}\int_0^{C_\zeta L/4} (\alpha(s))^{2\theta} \dd s\bigg)^{1/2} \bigg( 2\pi \int_0^{\infty} (\alpha(r))^{2(1-\theta)}r \dd r \bigg)^{1/2}
			\end{aligned}
		\end{equation*}
		is valid.
	\end{proposition}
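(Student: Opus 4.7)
The plan is to follow the strategy of Proposition~\ref{prop.-sing.int.op.on Sigma} but, because the range space is now $L^2(\R^2)$ rather than $L^2(\Sigma)$, the standard Schur test with unit weights is no longer available: the integral of $\alpha(|x-y_\Sigma|)^{2(1-\theta)}$ over all of $\R^2$ has to be absorbed via polar coordinates. I would therefore first factorize the kernel as
\begin{equation*}
|a(x-y_\Sigma)|\leq \alpha(|x-y_\Sigma|)^{\theta}\cdot\alpha(|x-y_\Sigma|)^{1-\theta}
\end{equation*}
and apply the Cauchy--Schwarz inequality pointwise in $x\in\R^2$ to get
\begin{equation*}
|(A\varphi)(x)|^2\leq\Big(\int_\Sigma\alpha(|x-y_\Sigma|)^{2\theta}\dd\sigma(y_\Sigma)\Big)\Big(\int_\Sigma\alpha(|x-y_\Sigma|)^{2(1-\theta)}|\varphi(y_\Sigma)|^2\dd\sigma(y_\Sigma)\Big).
\end{equation*}

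The key step is a geometric estimate showing that for every $x\in\R^2$,
\begin{equation*}
\int_\Sigma\alpha(|x-y_\Sigma|)^{2\theta}\dd\sigma(y_\Sigma)\leq\frac{4}{C_\zeta}\int_0^{C_\zeta L/4}\alpha(s)^{2\theta}\dd s.
\end{equation*}
To prove this, I would pick $t_0\in[0,L)$ at which $s\mapsto|x-\zeta(s)|$ attains its minimum $d:=\dist(x,\Sigma)$ (which exists by compactness of $\Sigma$). For $s\in\R$ with $|s-t_0|\leq L/2$, the bi-Lipschitz property~\eqref{eq_bi_Lipschitz} and the triangle inequality give
\begin{equation*}
|x-\zeta(s)|\geq|\zeta(t_0)-\zeta(s)|-d\geq C_\zeta|t_0-s|-d,
\end{equation*}
while the minimality of $t_0$ also yields $|x-\zeta(s)|\geq d$. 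A short case distinction according to whether $C_\zeta|t_0-s|/2\leq d$ or not then shows that $|x-\zeta(s)|\geq C_\zeta|t_0-s|/2$ for all such $s$. Using the $L$-periodicity of $\zeta$ to rewrite the $\Sigma$-integral as an integral over $[t_0-L/2,t_0+L/2]$, the monotonicity of $\alpha$, and the substitution $r=C_\zeta u/2$, the claimed bound follows.

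The remaining step is to integrate over $\R^2$ and invoke Fubini, which gives
\begin{equation*}
\|A\varphi\|_{L^2(\R^2)}^2\leq\frac{4}{C_\zeta}\int_0^{C_\zeta L/4}\alpha(s)^{2\theta}\dd s\cdot\int_\Sigma|\varphi(y_\Sigma)|^2\Big(\int_{\R^2}\alpha(|x-y_\Sigma|)^{2(1-\theta)}\dd x\Big)\dd\sigma(y_\Sigma).
\end{equation*}
The inner integral is translation-invariant in $y_\Sigma$, and a passage to polar coordinates turns it into $2\pi\int_0^\infty\alpha(r)^{2(1-\theta)}r\,\dd r$, finite by hypothesis. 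Taking the square root produces the claimed estimate on $\|A\|$, and well-definedness of $A$ is automatic from the finiteness of both factors. The main technical obstacle is the geometric lower bound $|x-\zeta(s)|\geq C_\zeta|t_0-s|/2$ for $x$ potentially far from $\Sigma$; this loss of a factor of two compared to Proposition~\ref{prop.-sing.int.op.on Sigma} is precisely what produces the limit $C_\zeta L/4$ (rather than $C_\zeta L/2$) in the $\alpha^{2\theta}$-integral.
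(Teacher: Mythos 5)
Your argument is correct and is essentially the paper's proof carried out explicitly: the pointwise Cauchy--Schwarz split of the kernel into $\alpha^\theta\cdot\alpha^{1-\theta}$ followed by Fubini is exactly the version of the Schur test the paper invokes, and the geometric bound $|x-\zeta(s)|\geq\tfrac{C_\zeta}{2}|t_0-s|$ (which the paper obtains slightly more directly from $|\zeta(t_0)-\zeta(s)|\leq 2|x-\zeta(s)|$ via the minimality of $t_0$, without a case split) is the same key estimate. No substantive difference.
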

	\begin{proof}
        As in the proof of Proposition~\ref{prop.-sing.int.op.on Sigma} we make use of the isometric isomorphism $\iota_\zeta$ defined by~\eqref{def_iota_zeta} and consider the operator $\widetilde{A} := A \iota_\zeta^{-1}: L^2((0, L)) \rightarrow L^2(\mathbb{R}^2)$ which acts as
		\begin{equation*}
			(\widetilde{A} f)(x) = \int_0^L a(x - \zeta(s)) f(s) \dd s, \qquad f \in L^2((0, L)),~ x \in \R^2.
		\end{equation*}
		Defining $K \in [0, +\infty]$ by
		\begin{equation} \label{eq_Schur_test_2}
			K:= \bigg(\sup_{x \in \R^2} \int_0^L (\alpha(|x - \zeta(s)|))^{2\theta}\dd s\bigg)^{1/2} \bigg(\sup_{s \in (0,L)}  \int_{\mathbb{R}^2} (\alpha(|x - \zeta(s)|))^{2(1-\theta)} \dd x\bigg)^{1/2},
		\end{equation}
		we get with \eqref{eq.-pot.int.op.} and the Schur test again that $\widetilde{A}$ is well-defined, if $K < +\infty$, and in this case $\| A \| = \| \widetilde{A} \| \leq K$.
		We start by estimating the first term in \eqref{eq_Schur_test_2}. Fix $x \in \R^2$. Choose $t \in (0,L]$ such that $\min_{s\in\R}|x-\zeta(s)| = |x-\zeta(t)|$. Then,
		\begin{equation*}
			|\zeta(t) - \zeta(s) |  \leq |\zeta(t)-x| + |x-\zeta(s)| \leq 2 |x-\zeta(s)| \quad \forall s \in \R. 
		\end{equation*}
		Combining this estimate with \eqref{eq_bi_Lipschitz} gives us
		\begin{equation*}
			\frac{C_\zeta}{2}  |t-s |  \leq |x-\zeta(s)|  \quad \forall s \in \overline{B(t,L/2)}. 
		\end{equation*}
		Thus, we can estimate the first term in \eqref{eq_Schur_test_2} in the same way as in the proof of Proposition~\ref{prop.-sing.int.op.on Sigma} by
		\begin{equation*}
			\bigg(\frac{4}{C_\zeta}\int_0^{C_\zeta L/4} (\alpha(s))^{2\theta} \dd s\bigg)^{1/2},
		\end{equation*}
		which is finite by assumption.
		Furthermore, using polar coordinates one gets for $s \in (0,L)$ that
		\begin{equation*}
			\int_{\mathbb{R}^2} (\alpha(|x - \zeta(s)|))^{2(1-\theta)} \dd x = 2 \pi \int_0^\infty (\alpha(r))^{2(1-\theta)} r  \dd r,
		\end{equation*} 
		and therefore the second term in \eqref{eq_Schur_test_2} can be estimated by the square root of $2 \pi \int_0^\infty (\alpha(r))^{2(1-\theta)} r \, \dd r$. This finishes the proof of the proposition.
	\end{proof}

	Throughout this paper, integral operators with kernels, where the modified Bessel functions $K_0$ and $K_1$ of the second kind appear, will play an important role. Hence, we summarize some of their properties:
	
	\begin{lemma} \label{lemma_Bessel_functions}
		The following statements are valid for the modified Bessel functions $K_0$ and $K_1$.
		\begin{itemize}
			\item[\textup{(i)}] For all $t > 0$ one has $K_0(t)>0$ and $K_1(t) > 0$.
			\item[\textup{(ii)}] $K_0'(t) = -K_1(t)$ and $t K_1'(t) = - t K_0(t) - K_1(t)$ for all $t \in \mathbb{C} \setminus (- \infty,0]$.
			\item[\textup{(iii)}] There exist analytic functions $g_1, g_2 : \mathbb{C} \to \mathbb{C}$ such that for all $t \in \mathbb{C} \setminus (- \infty,0]$
			\begin{equation*} 
				K_1(t) = \frac{1}{t} + t  g_1(t^2) \ln(t) + t g_2(t^2).
			\end{equation*}
			\item[\textup{(iv)}]   There exist constants $\kappa_1,\,\tilde\kappa_1 > 0$ such that for all $t \in \mathbb{C} \setminus (- \infty,0]$
			\begin{equation*}
				|K_0(t)| \leq \kappa_1 \frac{ 1+ |\ln|t||  }{1+ \sqrt{|t|}} \ee^{-\textup{Re}(t)}\leq \tilde\kappa_1 |t|^{-1/4}\ee^{-\textup{Re}(t)}.
			\end{equation*}
			\item[\textup{(v)}]  There exists a constant $\kappa_2 > 0$ such that for all $t \in \mathbb{C} \setminus(- \infty,0]$ and $j \in \{0,1\}$
			\begin{equation*}
				\left| \frac{d^j}{d t^j} K_1(t) \right| \leq \kappa_2 \left(\frac{1}{\sqrt{|t|}} + \frac{1-j}{|t|} + \frac{j}{|t|^2} \right) \ee^{-\textup{Re}(t)}.
			\end{equation*}
		\end{itemize}
	\end{lemma}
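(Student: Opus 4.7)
The plan is to assemble all five items from three standard ingredients: the integral representation $K_\nu(t)=\int_0^\infty \ee^{-t\cosh s}\cosh(\nu s)\,\dd s$ valid for $\Re t>0$, the classical series expansion of $K_1$ near $t=0$, and the large-$|t|$ asymptotic $K_\nu(t)\sim\sqrt{\pi/(2t)}\,\ee^{-t}$. Items (i)--(iii) come essentially for free from these three facts; the substantive work is concentrated in (iv) and (v), where uniform bounds on $\mathbb{C}\setminus(-\infty,0]$ must be patched together from the small-$|t|$ and large-$|t|$ regimes.

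First I would dispatch (i) by reading positivity directly off the integral representation for $t\in(0,+\infty)$. For (ii), the identity $K_0'=-K_1$ follows by differentiating under the integral sign and comparing with the $\nu=1$ kernel, while the formula for $tK_1'(t)$ is a direct consequence of the classical recurrences $K_\nu'(t)=-\tfrac12(K_{\nu-1}(t)+K_{\nu+1}(t))$ together with $K_2(t)=K_0(t)+\tfrac{2}{t}K_1(t)$; both identities extend from $(0,+\infty)$ to $\mathbb{C}\setminus(-\infty,0]$ by analytic continuation. For (iii), I would invoke the standard series
\begin{equation*}
K_1(t)=\frac{1}{t}+\ln(t/2)\,I_1(t)-\frac{1}{2}\sum_{k=0}^{\infty}\frac{(t/2)^{2k+1}}{k!(k+1)!}\bigl(\psi(k+1)+\psi(k+2)\bigr),
\end{equation*}
observe that $I_1(t)=t\,\tilde I(t^2)$ for an entire $\tilde I$ while the last series has the form $t\,\tilde S(t^2)$, and then absorb $\ln 2$ into the non-logarithmic part to obtain the stated decomposition with entire $g_1,g_2$.

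For (iv) and (v) I would split $\mathbb{C}\setminus(-\infty,0]$ into the regions $\{0<|t|\leq 1\}$ and $\{|t|\geq 1\}$. On the bounded region the series from (iii) and the analogous expansion $K_0(t)=-\ln(t/2)-\gamma+t^2 h_1(t^2)\ln t+t^2 h_2(t^2)$ produce the logarithmic and $1/|t|$, $1/|t|^2$ blow-ups directly, while $\ee^{-\Re t}$ is bounded above and below by constants. On the unbounded region I would derive $|K_\nu(t)|\leq C|t|^{-1/2}\ee^{-\Re t}$ from the integral representation by using $\cosh s\geq 1+s^2/2$: after extracting $\ee^{-t}$ the remaining kernel is dominated by a Gaussian in $s$ whose integration yields the factor $|t|^{-1/2}$. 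The derivative bound in (v) then follows from (ii) combined with the separate bounds on $K_0$ and $K_1$. Gluing the two regimes gives the uniform estimates of (iv) and (v), and the second inequality in (iv) with $|t|^{-1/4}$ is an elementary calculus observation: $(1+|\ln r|)/(1+\sqrt r)$ is dominated by a constant times $r^{-1/4}$ on $(0,+\infty)$.

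The main obstacle, though technical rather than conceptual, is to get clean uniform constants across the transition $|t|\approx 1$ and to track the branches of $\sqrt{t}$ and $\ln t$ implicit in the asymptotics when $\arg t$ approaches $\pm\pi$. I would handle this by working throughout on $\mathbb{C}\setminus(-\infty,0]$ with the principal branch and noting that all the required estimates depend only on $|t|$ and $\Re t$, so they are stable under rotation of $\arg t$ within the slit domain. The remaining care is bookkeeping to ensure that the bounds from the two regimes glue to a single constant uniform over the whole slit plane.
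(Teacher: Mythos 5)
Items (i)--(iii) are fine and take essentially the paper's route: the paper simply cites the DLMF (\S\S10.29, 10.31, 10.32) for the integral representation, the derivative recurrences, and the power series with logarithmic term, which is exactly what you reproduce. The substantive difficulty lies in your treatment of (iv)--(v) on the unbounded region. The integral representation $K_\nu(t)=\int_0^\infty \ee^{-t\cosh s}\cosh(\nu s)\,\dd s$ converges only for $\Re t>0$; the Gaussian-domination argument you sketch (pull out $\ee^{-t}$ and estimate what remains via $\cosh s\geq 1+s^2/2$) therefore yields the bound $|K_\nu(t)|\leq C|t|^{-1/2}\ee^{-\Re t}$ on the closed right half plane only. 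The lemma, however, asserts these estimates for all $t\in\mathbb{C}\setminus(-\infty,0]$, and the region $\{|t|\geq 1,\ \Re t\leq 0\}$ is exactly where $\ee^{-\Re t}$ is large and the claim is not a trivial consequence of boundedness. Your closing remark that the estimates ``depend only on $|t|$ and $\Re t$, so they are stable under rotation of $\arg t$'' is not a proof; an inequality established on a sub-sector does not analytically continue to the rest of the slit plane.

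To close the gap, you can do one of two things. The paper's route is to invoke the large-argument Poincar\'e asymptotics of $K_\nu$ with their uniform error bounds (DLMF \S10.25(ii), \S10.40), which are valid on $|\arg t|<\pi$ and yield the bound directly throughout the slit plane. Alternatively, you can keep your integral-representation argument on $\Re t>0$ and cover the left half plane via the analytic-continuation formula $K_0(z\ee^{\pm i\pi})=K_0(z)\mp i\pi I_0(z)$ (and the analogous identity for $K_1$): writing $t=z\ee^{\pm i\pi}$ with $\Re z>0$, one has $\ee^{-\Re t}=\ee^{\Re z}$, and combining your bound $|K_\nu(z)|\leq C|z|^{-1/2}\ee^{-\Re z}$ with the standard estimate $|I_\nu(z)|\leq C|z|^{-1/2}\ee^{\Re z}$ on $\Re z>0$, $|z|\geq 1$, gives $|K_0(t)|\leq C|t|^{-1/2}\ee^{-\Re t}$ on the remaining sector. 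Everything else in your proposal --- the small-$|t|$ bound from the series, the elementary inequality $(1+|\ln r|)/(1+\sqrt r)\leq C r^{-1/4}$ on $(0,+\infty)$, and the derivation of the $j=1$ case of (v) from (ii) and the $K_0$, $K_1$ bounds --- is sound.
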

	\begin{proof}
		Item~(i) can be deduced from the integral representations of $K_0$ and $K_1$ in \cite[\S~10.32]{DLMF} and item~(ii) follows from\cite[\S~10.29(i)]{DLMF}. For the proof of item~(iii) one merely has to use the power series expansion of $K_1$ in \cite[\S~10.31]{DLMF}, cf. the proof of \cite[Lem.~3.2]{BHOP20} for details. The first claimed inequality in~(iv) follows from the asymptotic expansion of $K_0$, see, e.g., \cite[\S~10.25(ii) and \S~10.30(i)]{DLMF}, the second inequality is then obtained immediately from the first one. Similarly, the asymptotic expansion of $K_1$ from \cite[\S~10.25(ii) and \S~10.30(i)]{DLMF}  leads to the inequality in item~(v) for $j=0$. For the case $j=1$ one obtains the claimed inequality from the recurrence relation of the modified Bessel functions in  item~(ii)  and the asymptotics of $K_0$ and $K_1$ for small and large arguments. 
	\end{proof}

In the remaining part of this subsection we introduce and discuss several families of integral operators that will be used throughout the paper. We define for $w \in \mathbb{C} \setminus [0, +\infty)$ the single layer potential $SL(w)$ acting on $\varphi \in L^2(\Sigma)$ by 
\begin{equation} \label{def_single_layer_potential}
  SL(w) \varphi(x) = \frac{1}{2 \pi} \int_\Sigma  K_0(-i\sqrt{w} |x-y_\Sigma|) \varphi(y_\Sigma) \dd \sigma(y_\Sigma), \quad x \in \mathbb{R}^2 \setminus \Sigma.
\end{equation}
Taking the second bound from Lemma~\ref{lemma_Bessel_functions} (iv) and Proposition~\ref{prop.-pot.int.op.} with $\theta=1/2$ into account, one finds that $SL(w)$ is bounded from $L^2(\Sigma)$ to $L^2(\mathbb{R}^2)$.  
In the next lemma we collect the properties of the single layer potential needed for the further considerations in this paper.
	\begin{lemma} \label{Properties_SL}
		Let $w \in \mathbb{C} \setminus [0,+\infty)$  and $SL(w)$ be defined as in \eqref{def_single_layer_potential}. Then the following statements hold.
		\begin{itemize}
			\item[\textup{(i)}] $SL(w): L^2(\Sigma) \rightarrow H^{3/2}_\Delta(\mathbb{R}^2 \setminus \Sigma)$ is bounded and $\Ran (SL(w)) \subset H^1(\mathbb{R}^2)$. Moreover, if 
    $\Sigma$ is a $C^{k}$-boundary with $k \geq 2$, then 
	\begin{equation} \label{single_layer_potential_mapping_properties}
		SL(w): H^{s-1/2}(\Sigma) \rightarrow H^{s+1}_\Delta(\mathbb{R}^2 \setminus \Sigma), \quad s \in \left[\frac{1}{2},k-1\right],
\end{equation}
is well-defined and bounded.
			\item[\textup{(ii)}] $(-\Delta - w) SL(w) \varphi = 0$ in $\mathbb{R}^2 \setminus \Sigma$ for all $\varphi \in L^2(\Sigma)$.
        \item[\textup{(iii)}]
        For every $\varphi \in L^2(\Sigma)$ there holds  
			\begin{equation*}
				\gamma_D^{+}\big( SL(w) \varphi \big)_{+} - \gamma_D^{-} \big( SL(w) \varphi \big)_{-} = 0, \:  \gamma_N^{+}\big( SL(w) \varphi \big)_{+} - \gamma_N^{-} \big( SL(w) \varphi \big)_{-} = \varphi.
			\end{equation*}
		\end{itemize}
	\end{lemma}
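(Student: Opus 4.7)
The plan is to prove the three items in the order (ii), (i), (iii), since the distributional content of (ii) feeds directly into the regularity analysis of (i) and the jump relations of (iii).

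For (ii), the first observation is that $G_w(x) := \frac{1}{2\pi} K_0(-i\sqrt{w}|x|)$ is a fundamental solution of $-\Delta - w$ in $\mathbb{R}^2$, which one verifies by a direct computation of the Laplacian in polar coordinates together with the recurrence relation in Lemma~\ref{lemma_Bessel_functions}(ii). For any fixed $x \in \mathbb{R}^2 \setminus \Sigma$ the distance $\dist(x,\Sigma) > 0$ remains bounded below on a neighborhood of $x$, hence the pointwise bounds in Lemma~\ref{lemma_Bessel_functions}(iv)--(v) legitimize differentiation under the integral sign and yield $(-\Delta - w)SL(w)\varphi(x) = 0$ there.

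For the $L^2(\mathbb{R}^2)$-boundedness in (i) I would apply Proposition~\ref{prop.-pot.int.op.} with $\theta = 1/2$ and, exploiting $\Re(-i\sqrt{w}) = \Im\sqrt{w} > 0$, the majorant $\alpha(r) = \tilde\kappa_1 r^{-1/4}\ee^{-\Im\sqrt{w}\, r}$ from Lemma~\ref{lemma_Bessel_functions}(iv); both integrability hypotheses of the proposition are then satisfied. An analogous Schur-type estimate, this time with some $\theta \in (0, 1/2)$ absorbing the $r^{-1}$ singularity of the gradient kernel coming from Lemma~\ref{lemma_Bessel_functions}(v), shows $\nabla SL(w)\varphi \in L^2(\mathbb{R}^2 \setminus \Sigma)$, so $SL(w)\varphi \in H^1(\Omega_\pm)$; combined with $\Delta SL(w)\varphi = w\, SL(w)\varphi \in L^2$ from (ii) this gives boundedness into $H^1_\Delta(\mathbb{R}^2 \setminus \Sigma)$. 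The upgrade to $H^{3/2}_\Delta(\mathbb{R}^2 \setminus \Sigma)$ on a merely bi-Lipschitz curve is the main obstacle and I would deal with it by invoking the established regularity theory of boundary layer potentials on Lipschitz domains developed in \cite[Chap.~6]{M00} and \cite{BGM23}, which provide $SL(w)\varphi \in H^{3/2}(\Omega_\pm)$ for $\varphi \in L^2(\Sigma)$. The inclusion $\Ran(SL(w)) \subset H^1(\mathbb{R}^2)$ then follows from gluing the two $H^1(\Omega_\pm)$ pieces through the Dirichlet no-jump of (iii). For $C^k$-boundaries with $k \geq 2$, the scale \eqref{single_layer_potential_mapping_properties} reduces, together with (ii), to the classical smooth mapping property $SL(w) : H^{s-1/2}(\Sigma) \to H^{s+1}(\Omega_\pm)$ (see e.g. \cite[Thm.~6.12]{M00}).

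For (iii), the Dirichlet no-jump follows because $SL(w)\varphi$ is continuous across $\Sigma$: for $\varphi \in C(\Sigma)$ this is the classical fact for weakly singular convolution kernels, and it extends to $\varphi \in L^2(\Sigma)$ by density combined with the $H^{3/2}(\Omega_\pm)$-boundedness established above and the continuity of the two Dirichlet traces from Lemma~\ref{Trace_Extension}. For the Neumann jump, since $G_w$ is a fundamental solution one has $(-\Delta - w)SL(w)\varphi = \varphi \otimes \delta_\Sigma$ in $\mathcal{D}'(\mathbb{R}^2)$; pairing with any $\psi \in \mathcal{D}(\mathbb{R}^2)$ gives the right-hand side $\langle \overline{\varphi}, \gamma_D \psi \rangle_\Sigma$. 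Computing the same pairing by the Green identity piecewise on $\Omega_\pm$, using (ii) to eliminate the volume $(-\Delta - w)$-terms and the Dirichlet no-jump to cancel the $\gamma_N \psi$-contributions (which agree from both sides), one is left with $\langle \gamma_N^+ SL(w)\varphi_+ - \gamma_N^- SL(w)\varphi_-, \gamma_D \psi \rangle_\Sigma$. The density of $\{\gamma_D \psi : \psi \in \mathcal{D}(\mathbb{R}^2)\}$ in $H^{1/2}(\Sigma)$ then forces $\gamma_N^+ SL(w)\varphi_+ - \gamma_N^- SL(w)\varphi_- = \varphi$, completing the proof.
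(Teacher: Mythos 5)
Your proofs of items (ii) and (iii) are self-contained and essentially sound, and they differ from the paper's argument, which for Lipschitz boundaries simply defers all three claims to three references (\cite[Eq.~(3.17) and Lem.~3.3~(i)]{HU20}, \cite[Thm.~6.11]{M00}, and \cite[Thm.~7.4]{MT00}). Your fundamental-solution computation for~(ii), the Schur estimates via Proposition~\ref{prop.-pot.int.op.} for the $L^2\to L^2$ and $L^2\to H^1(\Omega_\pm)$ mapping, and the distributional derivation of the Neumann jump in~(iii) are all correct in outline (modulo the sign slip $\Delta SL(w)\varphi = -w\,SL(w)\varphi$, which is harmless). What you buy by this route is transparency: one sees precisely where the Lipschitz geometry, the Bessel asymptotics, and Green's identity enter, at the cost of still needing to cite the $H^{3/2}$ Lipschitz regularity from the literature; here your references are imprecise, because the paper obtains $L^2(\Sigma)\to H^{3/2}(\Omega_\pm)$ from \cite{MT00}, whereas \cite[Chap.~6]{M00} alone only delivers $H^1(\Omega_\pm)$ for rough boundaries.

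There is, however, a genuine gap in your treatment of the $C^k$ case of~(i). You reduce \eqref{single_layer_potential_mapping_properties} to ``the classical smooth mapping property $SL(w):H^{s-1/2}(\Sigma)\to H^{s+1}(\Omega_\pm)$'' and cite \cite[Thm.~6.12]{M00}, but $\Omega_-$ is unbounded and the smooth layer-potential regularity in McLean is local (or formulated with compactly supported cutoffs); it does not, by itself, control the global $H^{s+1}(\Omega_-)$ norm. The paper resolves this by introducing a cutoff $\chi\in C_0^\infty(\R^2)$ with $\chi\equiv 1$ near $\overline{\Omega_+}$, applying \cite[Cor.~6.14]{M00} to $\chi\,SL(w)\varphi$ to get the near-boundary regularity, and then exploiting the exponential decay of $K_0$ and its derivatives (cf.\ \cite[\S~10.25(ii) and \S~10.29(i)]{DLMF}) to show $(1-\chi)SL(w)\varphi\in H^k(\Omega_\pm)$; finally boundedness of \eqref{single_layer_potential_mapping_properties} follows by the closed graph theorem combined with the already-established $L^2(\Sigma)\to H^{3/2}_\Delta(\R^2\setminus\Sigma)$ bound. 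Your argument needs this decomposition, or some other explicit control of the tail of $SL(w)\varphi$ at infinity, for the $C^k$ mapping property to hold on the unbounded exterior component.
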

	\begin{proof}
		If $\Sigma$ is a Lipschitz boundary, then the asserted claims follow immediately from \cite[Eq.~ (3.17) and Lem.~3.3~(i)]{HU20}, \cite[Thm.~6.11]{M00}, and \cite[Thm.~7.4]{MT00}. Thus it remains to show the improved mapping properties from item~(i) if $\Sigma$ is a $C^{k}$-boundary and if $s \in [\tfrac{1}{2}, k-1]$.  For this purpose, let $\chi \in C_0^{\infty}(\R^2)$ be a suitable cut-off function with $\chi \equiv 1$ on an open neighbourhood of $ \overline{\Omega_{+}}$.  Then it follows from \cite[Cor.~6.14]{M00} that $(\chi SL(w) \varphi)_\pm \in H^{s+1}(\Omega_{\pm})$ is valid for all $\varphi \in H^{s-1/2}(\Sigma)$.  Furthermore, it follows from the large argument asymptotic expansions of the derivatives of the modified Bessel function $K_0$ from \cite[\S~10.25(ii) and \S~10.29(i)]{DLMF} and the integral representation of the single layer potential in \eqref{def_single_layer_potential} that $(1-\chi) SL(w) \varphi \in H^k(\Omega_{\pm})$. Hence, the mapping in~\eqref{single_layer_potential_mapping_properties} is well-defined. Since $SL(w): L^2(\Sigma) \rightarrow H^{3/2}_\Delta(\mathbb{R}^2 \setminus \Sigma)$ is bounded, it follows with the closed graph theorem, that also the mapping in~\eqref{single_layer_potential_mapping_properties} is bounded.
	\end{proof}

We will also make use of the integral operators $WL(w)$ and $\widetilde{WL}(w)$ that are formally defined via their action on $\varphi \in L^2(\Sigma)$ by
\begin{equation} \label{def_Psi}
  WL(w) \varphi(x) := \frac{i \sqrt{w}}{4 \pi} \int_\Sigma \frac{\overline{{\bf x}} - \overline{{\bf y}_\Sigma}}{|x-y_\Sigma|}  K_1(-i\sqrt{w} |x-y_\Sigma|) \varphi(y_\Sigma) \dd \sigma(y_\Sigma), \quad x \in \mathbb{R}^2 \setminus \Sigma,
\end{equation}
and 
\begin{equation} \label{def_Xi}
  \widetilde{WL}(w) \varphi(x) := \frac{i \sqrt{w}}{4 \pi} \int_\Sigma \frac{{\bf x} - {\bf y}_\Sigma}{|x-y_\Sigma|}  K_1(-i\sqrt{w} |x-y_\Sigma|) \varphi(y_\Sigma) \dd \sigma(y_\Sigma), \quad x \in \mathbb{R}^2 \setminus \Sigma.
\end{equation}
While it follows from Lemma~\ref{lemma_Bessel_functions}~(v) and Proposition~\ref{prop.-pot.int.op.} (for $\theta = 1/4$) that $WL(w)$ and $\widetilde{WL}(w)$ are bounded between $L^2$-spaces, one can even show better mapping properties. 

\begin{proposition} \label{proposition_Psi}
  Let $w \in \mathbb{C} \setminus [0, +\infty)$ and $WL(w), \widetilde{WL}(w)$ be defined by~\eqref{def_Psi} and~\eqref{def_Xi}, respectively. Then, the following holds:
  \begin{itemize}
    \item[\textup{(i)}] $WL(w) = \partial_z SL(w)$. In particular, $WL(w): L^2(\Sigma) \rightarrow H^{1/2}_\Delta(\mathbb{R}^2 \setminus \Sigma)$ and $\partial_{\overline{z}} WL(w) = \frac{1}{4} \Delta SL(w) = -\frac{w}{4} SL(w): L^2(\Sigma) \rightarrow H^{3/2}_\Delta(\mathbb{R}^2 \setminus \Sigma)$ are bounded and $\Ran (\partial_{\overline{z}} WL(w)) \subset H^1(\mathbb{R}^2)$. Moreover, if 
    $\Sigma$ is a $C^{k}$-boundary with $k \geq 2$, then 
	\begin{equation} \label{WL_C_k}
		WL(w) : H^{s-1/2}(\Sigma) \to H^{s}_{\dzbar}(\R^2 \setminus \Sigma), \quad s \in \left[\frac{1}{2},k-1\right],
\end{equation}
is well-defined and bounded.
    \item[\textup{(ii)}] $\widetilde{WL}(w) = \partial_{\overline{z}} SL(w)$. In particular, $\widetilde{WL}(w): L^2(\Sigma) \rightarrow H^{1/2}_\Delta(\mathbb{R}^2 \setminus \Sigma)$ and $\partial_{z} \widetilde{WL}(w) = \frac{1}{4} \Delta SL(w) = -\frac{w}{4} SL(w): L^2(\Sigma) \rightarrow H^{3/2}_\Delta(\mathbb{R}^2 \setminus \Sigma)$ are bounded and $\Ran (\partial_{z} \widetilde{WL}(w)) \subset H^1(\mathbb{R}^2)$. Moreover, if 
    $\Sigma$ is a $C^{k}$-boundary with $k \geq 2$, then 
\begin{equation} \label{WL_tilde_C_k}
		\widetilde{WL}(w) : H^{s-1/2}(\Sigma) \to H^{s}_{\dz}(\R^2 \setminus \Sigma), \quad s \in \left[\frac{1}{2},k-1\right],
\end{equation}
is well-defined and bounded.
    \item[\textup{(iii)}] $(-\Delta - w) WL(w) \varphi = 0$ and $(-\Delta - w) \widetilde{WL}(w) \varphi = 0$ in $\mathbb{R}^2 \setminus \Sigma$ for all $\varphi \in L^2(\Sigma)$.
  \end{itemize}
\end{proposition}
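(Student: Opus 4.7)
The plan is to reduce every assertion about $WL(w)$ and $\widetilde{WL}(w)$ to the corresponding statement about $SL(w)$ from Lemma~\ref{Properties_SL} by identifying these operators as Wirtinger derivatives of the single layer potential.

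First, I would prove the identity $WL(w)=\partial_z SL(w)$ (and analogously $\widetilde{WL}(w)=\partial_{\bar z}SL(w)$). For fixed $x\in\R^2\setminus\Sigma$ the kernel $y_\Sigma\mapsto K_0(-i\sqrt w|x-y_\Sigma|)$ is smooth in $x$ uniformly in $y_\Sigma\in\Sigma$ on a neighbourhood of $x$, so differentiation under the integral sign is legal. Using the chain rule together with $K_0'(t)=-K_1(t)$ from Lemma~\ref{lemma_Bessel_functions}~(ii) and $\partial_{x_j}|x-y_\Sigma|=(x_j-(y_\Sigma)_j)/|x-y_\Sigma|$, one obtains
\begin{equation*}
 \partial_z K_0(-i\sqrt w|x-y_\Sigma|)=\frac{i\sqrt w}{2}\,K_1(-i\sqrt w|x-y_\Sigma|)\,\frac{\overline{\mathbf{x}}-\overline{\mathbf{y}_\Sigma}}{|x-y_\Sigma|},
\end{equation*}
which is precisely $2\pi/\cdot$ times the kernel of $WL(w)$. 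The analogous computation with $\partial_{\bar z}$ yields $\widetilde{WL}(w)=\partial_{\bar z}SL(w)$.

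Next, the Lipschitz-case mapping properties in (i) and (ii) follow by composing with Lemma~\ref{Properties_SL}~(i). Since $SL(w):L^2(\Sigma)\to H^{3/2}_\Delta(\R^2\setminus\Sigma)\subset H^{3/2}(\R^2\setminus\Sigma)$ is bounded, so is $\partial_z SL(w):L^2(\Sigma)\to H^{1/2}(\R^2\setminus\Sigma)$. Because $4\partial_{\bar z}\partial_z=\Delta$ and $(-\Delta-w)SL(w)\varphi=0$ classically on $\R^2\setminus\Sigma$, one gets
\begin{equation*}
\partial_{\bar z}WL(w)\varphi=\tfrac14\Delta SL(w)\varphi=-\tfrac{w}{4}\,SL(w)\varphi,
\end{equation*}
and hence $\Delta WL(w)\varphi=4\partial_z\partial_{\bar z}WL(w)\varphi=-w\,WL(w)\varphi\in H^{1/2}(\R^2\setminus\Sigma)\subset L^2(\R^2\setminus\Sigma)$. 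This shows $WL(w):L^2(\Sigma)\to H^{1/2}_\Delta(\R^2\setminus\Sigma)$ is bounded. The boundedness of $\partial_{\bar z}WL(w)=-\frac{w}{4}SL(w):L^2(\Sigma)\to H^{3/2}_\Delta(\R^2\setminus\Sigma)$ and the inclusion $\operatorname{Ran}(\partial_{\bar z}WL(w))\subset H^1(\R^2)$ then come directly from Lemma~\ref{Properties_SL}~(i). The statements for $\widetilde{WL}(w)$ in (ii) are obtained by interchanging the roles of $\partial_z$ and $\partial_{\bar z}$.

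The improved mapping properties \eqref{WL_C_k} and \eqref{WL_tilde_C_k} in the $C^k$-case are handled analogously, starting from $SL(w):H^{s-1/2}(\Sigma)\to H^{s+1}_\Delta(\R^2\setminus\Sigma)$ for $s\in[\tfrac12,k-1]$. Applying $\partial_z$ gives a bounded map into $H^s(\Omega_\pm)$, while $\partial_{\bar z}\partial_z SL(w)\varphi=-\tfrac{w}{4}SL(w)\varphi$ belongs to $L^2(\Omega_\pm)$, so $WL(w)\varphi\in H^{s}_{\bar z}(\Omega_\pm)$ with a control of the norm by the closed graph theorem. Finally, claim (iii) is immediate: since Wirtinger derivatives commute with $\Delta$, one has $(-\Delta-w)WL(w)\varphi=\partial_z(-\Delta-w)SL(w)\varphi=0$ on $\R^2\setminus\Sigma$ by Lemma~\ref{Properties_SL}~(ii), and likewise for $\widetilde{WL}(w)$. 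The only point requiring some care is the pointwise differentiation under the integral leading to the kernel identities, but this is routine thanks to the smoothness of the Bessel kernel off the diagonal.
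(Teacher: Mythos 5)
Your proposal is correct and self-contained, but takes a more hands-on route than the paper. The paper disposes of the $WL(w)$-claims by the single remark that $-2i\,WL(w)$ equals the operator $\Psi_w$ from \cite{BHS23} and cites \cite[Prop.~2.1]{BHS23} verbatim, so it never re-derives the kernel identity $WL(w)=\partial_z SL(w)$ the way you do (differentiation under the integral sign, chain rule with $K_0'=-K_1$ and $\partial_z|x-y|=\tfrac12(\overline{\mathbf{x}}-\overline{\mathbf{y}})/|x-y|$). For $\widetilde{WL}(w)$ the paper uses the conjugation symmetry $\widetilde{WL}(w)\varphi=\overline{WL(\overline{w})\,\overline{\varphi}}$ to transfer the results in one line, whereas you run the mirror computation with $\partial_{\bar z}$ in place of $\partial_z$; both are fine, the conjugation trick is slightly shorter. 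The $C^k$-improvement is treated the same way in both: compose Lemma~\ref{Properties_SL}~(i) with a Wirtinger derivative and control $\partial_{\bar z}WL(w)=-\tfrac{w}{4}SL(w)$. Two small remarks on your writeup: the phrase ``$2\pi/\cdot$ times the kernel of $WL(w)$'' is garbled (you mean the computed expression is $2\pi$ times the kernel, which cancels the $1/(2\pi)$ prefactor of $SL(w)$); and the invocation of the closed graph theorem for \eqref{WL_C_k} is unnecessary, since the bound $\|\partial_{\bar z}WL(w)\varphi\|_{L^2}\le\tfrac{|w|}{4}\|SL(w)\varphi\|_{L^2}\le C\|\varphi\|_{H^{s-1/2}(\Sigma)}$ already gives the norm estimate directly.
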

\begin{proof}
The claims for $WL(w)$ can be shown in exactly the same way as \cite[Prop.~2.1]{BHS23} taking into account that $-2 i WL(w) = \Psi_w$, where $\Psi_w$ is the operator considered in \cite{BHS23}, we omit the details here. The claims for $\widetilde{WL}(w)$ follow from the fact that $\widetilde{WL}(w) \varphi = \overline{WL(\overline{w}) \overline{\varphi}}$ holds for all $\varphi \in L^2(\Sigma)$.
  The improved mapping properties for $C^{k}$-boundaries $\Sigma$ are a simple consequence of Lemma \ref{Properties_SL} (i), $WL(w) = \partial_z SL(w)$, and  $\widetilde{WL}(w) = \partial_{\overline{z}} SL(w)$.
\end{proof}

We will also make use of boundary integral operator versions of $SL(w)$, $WL(w)$, and $\widetilde{WL}(w)$. We define the single layer boundary integral operator 
\begin{equation} \label{def_single_layer_boundary_integral_operator}
  S(w) := \gamma_D SL(w): L^2(\Sigma) \rightarrow H^1(\Sigma)
\end{equation}
and note that this operator acts on $\varphi \in L^2(\Sigma)$ as 
\begin{equation} \label{IntegralRepresSLBIO} 
  S(w) \varphi(x_\Sigma) = \frac{1}{2 \pi} \int_\Sigma  K_0(-i\sqrt{w} |x_\Sigma-y_\Sigma|) \varphi(y_\Sigma) \dd \sigma(y_\Sigma), \quad x_\Sigma \in \Sigma.
\end{equation}
If $\Sigma$ is a $C^{k}$-boundary with $k \geq 2$, then the mapping properties of $S(w)$ improve. Using the mapping properties of the single layer potential in Lemma \ref{Properties_SL} (i) and Lemma~\ref{Trace_Extension} one gets that
	\begin{equation} \label{S_C_k}
		S(w) : H^{s-1/2}(\Sigma) \to H^{s+1/2}(\Sigma), \quad s \in [0,k-1],
	\end{equation}
is well-defined and bounded.

The operator $S(w)$ is also related to certain jump relations for $WL(w)$. The following lemma can be also shown in exactly the same way as \cite[Prop.~2.1]{BHS23} using the same arguments as for the previous proposition. Therefore, we will leave out its proof.

\begin{lemma} \label{lemma_jump_relations}
  Let $w \in \mathbb{C} \setminus [0, +\infty)$ and $WL(w), \widetilde{WL}(w)$ be defined by~\eqref{def_Psi} and~\eqref{def_Xi}, respectively. Then for any $\varphi \in L^2(\Sigma)$ the jump relations
    \begin{equation*}
      \begin{split}
        2 {\bf n} \big(\gamma_D^+ (WL(w) \varphi)_+ - \gamma_D^- (WL(w) \varphi)_- \big) &= \varphi, \\
        \frac{1}{2} \big(\gamma_D^+ (\partial_{\overline{z}} WL(w) \varphi)_+ + \gamma_D^- (\partial_{\overline{z}} WL(w) \varphi)_- \big) &= -\frac{w}{4} S(w) \varphi,
      \end{split}
    \end{equation*}
    and
    \begin{equation*}
      \begin{split}
        2 \overline{\bn} \big(\gamma_D^+ (\widetilde{WL}(w) \varphi)_+ - \gamma_D^- (\widetilde{WL}(w) \varphi)_- \big) &= \varphi, \\
        \frac{1}{2} \big(\gamma_D^+ (\partial_{z} \widetilde{WL}(w) \varphi)_+ + \gamma_D^- (\partial_{z} \widetilde{WL}(w) \varphi)_- \big) &= -\frac{w}{4} S(w) \varphi,
      \end{split}
    \end{equation*}
    hold.
\end{lemma}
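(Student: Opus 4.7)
The plan is to deduce both sets of jump relations from Proposition~\ref{proposition_Psi} together with the classical jump relations for the single layer potential from Lemma~\ref{Properties_SL}. The starting point is the identity $WL(w) = \partial_z SL(w)$ (and $\widetilde{WL}(w) = \partial_{\bar z} SL(w)$), so the task reduces to splitting the known jumps of $\nabla SL(w)\varphi$ into their Wirtinger components.

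First I would express the normal and tangential derivative operators in the Wirtinger basis. Writing $n \cdot \nabla = \mathbf{n} \partial_z + \bar{\mathbf{n}} \partial_{\bar z}$ and, using $t = (-n_2, n_1)$, the orthogonal relation $t \cdot \nabla = i\mathbf{n}\partial_z - i\bar{\mathbf{n}}\partial_{\bar z}$. Since $SL(w)\varphi \in H^1(\mathbb{R}^2)$ by Lemma~\ref{Properties_SL}~(i), its tangential derivative along $\Sigma$ has no jump, which gives
\begin{equation*}
\mathbf{n}\bigl[\gamma_D^+(\partial_z SL(w)\varphi)_+ - \gamma_D^-(\partial_z SL(w)\varphi)_-\bigr] = \bar{\mathbf{n}}\bigl[\gamma_D^+(\partial_{\bar z} SL(w)\varphi)_+ - \gamma_D^-(\partial_{\bar z} SL(w)\varphi)_-\bigr].
\end{equation*}
On the other hand, the Neumann jump relation in Lemma~\ref{Properties_SL}~(iii), combined with the identification $\gamma_N^\pm f = n\cdot \gamma_D^\pm \nabla f$ for functions in $H^{3/2}_\Delta(\Omega_\pm)$, reads
\begin{equation*}
\mathbf{n}\bigl[\gamma_D^+(\partial_z SL(w)\varphi)_+ - \gamma_D^-(\partial_z SL(w)\varphi)_-\bigr] + \bar{\mathbf{n}}\bigl[\gamma_D^+(\partial_{\bar z} SL(w)\varphi)_+ - \gamma_D^-(\partial_{\bar z} SL(w)\varphi)_-\bigr] = \varphi.
\end{equation*}
Adding and subtracting these two identities immediately yields $2\mathbf{n}\bigl[\gamma_D^+(WL(w)\varphi)_+ - \gamma_D^-(WL(w)\varphi)_-\bigr] = \varphi$ and the analogous statement with $\bar{\mathbf{n}}$ for $\widetilde{WL}(w)$, which are the first jump relations.

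For the second (non-singular) jump relations I would invoke the identities $\partial_{\bar z} WL(w) = -\frac{w}{4} SL(w)$ and $\partial_{z} \widetilde{WL}(w) = -\frac{w}{4} SL(w)$ established in Proposition~\ref{proposition_Psi}. Since $SL(w)\varphi \in H^1(\mathbb{R}^2)$, the two Dirichlet traces coincide and both equal $S(w)\varphi$ by the definition~\eqref{def_single_layer_boundary_integral_operator}, so
\begin{equation*}
\tfrac{1}{2}\bigl(\gamma_D^+(\partial_{\bar z} WL(w)\varphi)_+ + \gamma_D^-(\partial_{\bar z} WL(w)\varphi)_-\bigr) = -\tfrac{w}{4}\,\gamma_D SL(w)\varphi = -\tfrac{w}{4} S(w)\varphi,
\end{equation*}
and the same computation works for $\widetilde{WL}(w)$. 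All Wirtinger derivatives are well-defined thanks to the mapping properties $WL(w), \widetilde{WL}(w)\colon L^2(\Sigma)\to H^{1/2}_\Delta(\mathbb{R}^2\setminus\Sigma)$ and the trace extensions of Lemma~\ref{Trace_Extension}. The only point one must be careful about is the sign convention in the Neumann trace on $\Omega_-$, which the identification $\gamma_N^\pm f = n\cdot\gamma_D^\pm\nabla f$ (with the same $n$ outward to $\Omega_+$) already accounts for in the formulation of the jump relation in Lemma~\ref{Properties_SL}~(iii); no separate obstacle is expected.
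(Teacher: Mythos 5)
Your argument is correct and it takes a genuinely different route from the paper. The paper proves the lemma by following the approach of \cite[Prop.~2.1]{BHS23}, which works directly with the integral kernel of $WL(w)$: one expands $K_1$ via Lemma~\ref{lemma_Bessel_functions}~(iii), observes that the leading singularity of the kernel is $\tfrac{1}{-4\pi}\overline{\mathbf{x}-\mathbf{y}_\Sigma}/|x-y_\Sigma|^2$, and then invokes the non-tangential limit formula from \cite{EFV92} to read off the jump $\pm\tfrac14\overline{\bn}\varphi$ (exactly as is done in the proof of Lemma~\ref{lemma_W} for the principal-value part). Your proof instead exploits $WL(w)=\partial_z SL(w)$ and the decompositions $n\cdot\nabla=\bn\partial_z+\overline{\bn}\partial_{\bar z}$, $t\cdot\nabla=i\bn\partial_z-i\overline{\bn}\partial_{\bar z}$, reducing the jump relations to the classical Dirichlet/Neumann jump relations for $SL(w)$ from Lemma~\ref{Properties_SL}~(iii), with the absence of a tangential jump coming from $\Ran SL(w)\subset H^1(\R^2)$. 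This is cleaner in that it avoids any new non-tangential limit computations, but it relies on the identifications $\gamma_N^\pm f = n\cdot\gamma_D^\pm\nabla f$ and $\partial_t\gamma_D^\pm f = t\cdot\gamma_D^\pm\nabla f$ on $H^{3/2}_\Delta(\Omega_\pm)$, which the paper establishes (via \cite[Thm.~5.4]{BGM23} and \cite[Lem.~A.1]{GM08}) but uses only later, in the proof of Theorem~\ref{SpectrumTB}~(iii). The paper's kernel-level approach is self-contained at this stage, whereas yours is shorter and more conceptual given those trace identities; both are valid.
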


Since it follows from Proposition~\ref{proposition_Psi} that $WL(w): L^2(\Sigma) \rightarrow H^{1/2}_{\dzbar}(\mathbb{R}^2 \setminus \Sigma)$ and $\widetilde{WL}(w): L^2(\Sigma) \rightarrow H^{1/2}_{\dz}(\mathbb{R}^2 \setminus \Sigma)$, we can define with Lemma~\ref{Trace_Extension} for $w \in \mathbb{C} \setminus [0, +\infty)$ the operators
\begin{equation} \label{def_L}
  W(w): L^2(\Sigma) \rightarrow L^2(\Sigma), \quad W(w) \varphi := \frac{1}{2} \big( \gamma_D^+ (WL(w) \varphi)_+ + \gamma_D^- (WL(w) \varphi)_- \big),
\end{equation}
and
\begin{equation} \label{def_tilde_L}
  \widetilde{W}(w): L^2(\Sigma) \rightarrow L^2(\Sigma), \quad \widetilde{W}(w) \varphi := \frac{1}{2} \big( \gamma_D^+ (\widetilde{WL}(w) \varphi)_+ + \gamma_D^- (\widetilde{WL}(w) \varphi)_- \big).
\end{equation}
If $\Sigma$ is a $C^{k}$-boundary with $k \geq 2$, then the mapping properties of $W(w)$ and $\widetilde{W}(w)$ improve. Indeed, one can show with the help of~Proposition~\ref{proposition_Psi}~(i) and the boundedness of the  trace operator $\gamma_D^{\pm} : H^s_{\dzbar}(\Omega_{\pm}) \to H^{s-1/2}(\Sigma)$ from Lemma \ref{Trace_Extension} that
	\begin{equation} \label{W_C_k}
		W(w) : H^{s-1/2}(\Sigma) \to H^{s-1/2}(\Sigma), \quad s \in [\tfrac{1}{2},k-1],
	\end{equation}
	is well-defined and bounded. Similarly, Proposition~\ref{proposition_Psi}~(ii) implies that
	\begin{equation} \label{W_tilde_C_k}
		\widetilde{W}(w) : H^{s-1/2}(\Sigma) \to H^{s-1/2}(\Sigma), \quad s \in [\tfrac{1}{2},k-1],
	\end{equation}
	is well-defined and bounded as well.

In the following lemma we will see that $W(w)$ and $\widetilde{W}(w)$ can be represented as strongly singular boundary integral operators and that $\widetilde{W}(w) = -W(\overline{w})^*$.

\begin{lemma} \label{lemma_W}
  Let $w \in \mathbb{C} \setminus [0, +\infty)$. Then, for all $\varphi \in L^2(\Sigma)$ and almost all $x_\Sigma \in \Sigma$ the representations 
  \begin{equation*}
    W(w) \varphi(x_\Sigma) = \frac{i \sqrt{w}}{4 \pi} \lim_{\varepsilon \rightarrow 0+} \int_{\Sigma \setminus B(x_\Sigma, \varepsilon)} \frac{\overline{{\bf x}_\Sigma} - \overline{{\bf y}_\Sigma}}{|x_\Sigma-y_\Sigma|}  K_1(-i\sqrt{w} |x_\Sigma-y_\Sigma|) \varphi(y_\Sigma) \dd \sigma(y_\Sigma)
  \end{equation*}
  and
  \begin{equation*}
    \widetilde{W}(w) \varphi(x_\Sigma) = \frac{i \sqrt{w}}{4 \pi} \lim_{\varepsilon \rightarrow 0+} \int_{\Sigma \setminus B(x_\Sigma, \varepsilon)} \frac{{\bf x}_\Sigma - {\bf y}_\Sigma}{|x_\Sigma-y_\Sigma|}  K_1(-i\sqrt{w} |x_\Sigma-y_\Sigma|) \varphi(y_\Sigma) \dd \sigma(y_\Sigma)
  \end{equation*}
  hold, where $B(x_\Sigma, \varepsilon)$ denotes the ball with center $x_\Sigma$ and radius $\varepsilon$. In particular, $\widetilde{W}(w) = -W(\overline{w})^*$.
\end{lemma}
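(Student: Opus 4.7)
The strategy is to analyze the Dirichlet traces of $WL(w)\varphi$ from both sides of $\Sigma$ by combining the integral representation~\eqref{def_Psi} with a principal-value/Sokhotski-Plemelj analysis of the Cauchy-type singular part of the kernel. Using Lemma~\ref{lemma_Bessel_functions}(iii), the kernel of $WL(w)$ decomposes as
\[
\frac{i\sqrt w}{4\pi}\cdot\frac{\overline{\mathbf x}-\overline{\mathbf y}}{|x-y|}K_1(-i\sqrt w|x-y|)=-\frac{1}{4\pi(\mathbf x-\mathbf y)}+k_{\mathrm{reg}}(x,y),
\]
where I use the identity $\overline{\mathbf x-\mathbf y}/|x-y|^2=1/(\mathbf x-\mathbf y)$, and $k_{\mathrm{reg}}$ is only weakly singular (bounded up to factors of $|x-y|\ln|x-y|$ by Lemma~\ref{lemma_Bessel_functions}(iii)). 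So the leading singular part is precisely the Cauchy kernel with respect to arc-length measure.

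For the PV representation of $W(w)$, I would fix $\varphi\in L^2(\Sigma)$ and $x_\Sigma\in\Sigma$, and for $\varepsilon>0$ decompose $\varphi=\varphi_\varepsilon^{\mathrm{in}}+\varphi_\varepsilon^{\mathrm{out}}$ with $\varphi_\varepsilon^{\mathrm{in}}$ the restriction to $\Sigma\cap B(x_\Sigma,\varepsilon)$. The contribution of $\varphi_\varepsilon^{\mathrm{out}}$ to the averaged Dirichlet trace at $x_\Sigma$ has continuous kernel on a neighbourhood of $x_\Sigma$ and so equals exactly the truncated integral over $\Sigma\setminus B(x_\Sigma,\varepsilon)$. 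For $\varphi_\varepsilon^{\mathrm{in}}$, the regular contribution coming from $k_{\mathrm{reg}}$ tends to $0$ as $\varepsilon\to 0^+$ by dominated convergence. The Cauchy-kernel contribution is handled via Sokhotski-Plemelj on Lipschitz curves: the $L^2$-boundedness of the Cauchy singular integral on Lipschitz curves (Coifman-McIntosh-Meyer) and the existence of non-tangential boundary values a.e. imply that the averaged trace of this part equals the corresponding PV integral of the Cauchy kernel against $\varphi_\varepsilon^{\mathrm{in}}$, which also vanishes as $\varepsilon\to 0^+$ at every Lebesgue point of $\varphi$. Adding the two contributions and letting $\varepsilon\to 0^+$ yields the claimed representation for $W(w)$.

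For $\widetilde W(w)$ I would invoke the identity $\widetilde{WL}(w)\varphi=\overline{WL(\overline w)\overline\varphi}$ already used in the proof of Proposition~\ref{proposition_Psi}; passing to Dirichlet traces it gives $\widetilde W(w)\varphi=\overline{W(\overline w)\overline\varphi}$. Substituting the PV representation just proved for $W(\overline w)$, together with the two conjugation identities $\overline{\sqrt{\overline w}}=-\sqrt w$ (which follows from the convention $\Im\sqrt{\cdot}>0$) and $\overline{K_1(z)}=K_1(\overline z)$ (since $K_1$ has real power-series coefficients, cf. Lemma~\ref{lemma_Bessel_functions}(iii)), produces exactly the stated formula for $\widetilde W(w)$.

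The identity $\widetilde W(w)=-W(\overline w)^*$ is then a direct Fubini computation on the PV integrals: the Schwartz kernel of $W(\overline w)^*$ at $(x_\Sigma,y_\Sigma)$ is $\overline{k_{W(\overline w)}(y_\Sigma,x_\Sigma)}$, and the same two conjugation identities combined with the swap $(x_\Sigma\leftrightarrow y_\Sigma)$ (which flips the sign of $\overline{\mathbf x_\Sigma}-\overline{\mathbf y_\Sigma}$) reproduce exactly minus the kernel of $\widetilde W(w)$. The main technical obstacle is the Sokhotski-Plemelj analysis of the Cauchy part on the merely bi-Lipschitz curve $\Sigma$: on a $C^{1,1}$ curve this is classical, but for Lipschitz $\Sigma$ one has to appeal to the Coifman-McIntosh-Meyer $L^2$-boundedness of the Cauchy integral together with the non-tangential-limit theory for Cauchy integrals on Lipschitz graphs, after which the remaining bookkeeping is routine.
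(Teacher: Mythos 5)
Your proposal follows essentially the same route as the paper: split the kernel of $WL(w)$ via Lemma~\ref{lemma_Bessel_functions}~(iii) into the Cauchy-type leading singularity plus a weakly singular remainder, dispatch the remainder by dominated convergence, and handle the Cauchy part by a Plemelj-type jump formula on Lipschitz curves; the $\widetilde W(w)$ statements then follow by complex conjugation and inspection of the kernels, exactly as you indicate (and your conjugation identities $\overline{\sqrt{\overline w}}=-\sqrt w$ and $\overline{K_1(z)}=K_1(\overline z)$ are correct and lead to the right sign). The paper cites the jump formula of Escauriaza--Fabes--Verchota \cite{EFV92} directly for the non-tangential traces of $\partial_z$ of the logarithmic potential, whereas your $\varphi=\varphi_\varepsilon^{\mathrm{in}}+\varphi_\varepsilon^{\mathrm{out}}$ decomposition in effect re-derives the same Plemelj statement from the $L^2$-boundedness and a.e.\ existence of the Cauchy principal value (Coifman--McIntosh--Meyer). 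Either way the mathematical content is the same.

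One step is left implicit in your write-up and deserves to be flagged. The operator $W(w)$ in~\eqref{def_L} is defined via the \emph{abstract} Dirichlet traces $\gamma_D^\pm$ extended to $H^{1/2}_{\partial_{\bar z}}(\Omega_\pm)$ by Lemma~\ref{Trace_Extension}, not a priori via non-tangential boundary values. To replace those abstract traces by the non-tangential limits (and then invoke the Plemelj formula) one needs the fact that, for functions in $H^{1/2}_\Delta(\Omega_\pm)$, the extended Dirichlet trace coincides a.e.\ with the non-tangential boundary value; the paper secures this by invoking \cite[Thm.~3.6~(ii)]{BGM23}. Your phrase ``the existence of non-tangential boundary values a.e.'' is pointing in the right direction, but the identification of $\gamma_D^\pm$ with those non-tangential values is a separate assertion that should be cited explicitly. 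With that reference inserted, your argument is complete.
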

\begin{proof}
  We prove the claimed integral representation of $W(w)$, the one of $\widetilde{W}(w)$ can be proved in the same way. Having these implies then also $\widetilde{W}(w) = -W(\overline{w})^*$ due to the explicit form of the integral kernels. 

  Let $\varphi \in L^2(\Sigma)$ and $x_\Sigma \in \Sigma$ be fixed. Recall that by Lemma~\ref{lemma_Bessel_functions}~(iii) there exist analytic functions $g_1$ and $g_2$ such that for all $t \in \mathbb{C} \setminus (- \infty,0]$
  \begin{equation} \label{ExpansionBesselK}
    K_1(t) = \frac{1}{t} + t  g_1(t^2) \ln(t) + t g_2(t^2).
  \end{equation}
  By dominated convergence we have
  \begin{equation} \label{limit_g}
    \begin{split}
      \lim_{\Omega_{\pm} \ni x \to x_\Sigma} &\frac{i \sqrt{w}}{4 \pi}  \int_{\Sigma}   \big(-i\sqrt{w} |x-y_\Sigma| \big) \Big( g_1(-w |x-y_\Sigma|^2) \ln (-i\sqrt{w} |x-y_\Sigma|) \\
      & \quad\qquad+ g_2(-w |x-y_\Sigma|^2) \Big)\frac{\overline{{\bf x}} - \overline{{\bf y}_\Sigma}}{|x-y_\Sigma|}  \varphi(y_\Sigma) \dd \sigma(y_\Sigma) \\
      &= \frac{i \sqrt{w}}{4 \pi}  \int_{\Sigma}   \big(-i\sqrt{w} |x_\Sigma-y_\Sigma| \big) \Big( g_1(-w |x_\Sigma-y_\Sigma|^2) \ln (-i\sqrt{w} |x_\Sigma-y_\Sigma|) \\
      & \quad\qquad+ g_2(-w |x_\Sigma-y_\Sigma|^2) \Big)\frac{\overline{{\bf x}_\Sigma} - \overline{{\bf y}_\Sigma}}{|x_\Sigma-y_\Sigma|}  \varphi(y_\Sigma) \dd \sigma(y_\Sigma).
    \end{split}
  \end{equation}
  Moreover, by \cite[p.~1071]{EFV92} we have for the non-tangential limit at $x_{\Sigma}$
  \begin{equation} \label{NTLimitsdzSL}
    \begin{split}
      -\lim_{\Omega_{\pm} \ni x \to x_\Sigma}& \frac{1}{4 \pi}  \int_{\Sigma}   \frac{\overline{{\bf x}} - \overline{{\bf y}_\Sigma}}{|x-y_\Sigma|^2}  \varphi(y_\Sigma) \dd \sigma(y_\Sigma) \\
      &= \pm \frac{1}{4} \overline{{\bf n}(x_\Sigma)} \varphi(x_\Sigma) - \frac{1}{4 \pi} \lim_{\varepsilon \to 0^{+}} \int_{\Sigma \setminus B(x_\Sigma,\varepsilon)}  \frac{\overline{{\bf x}_\Sigma} - \overline{{\bf y}_\Sigma}}{|x_\Sigma-y_\Sigma|^2}  \varphi(y_\Sigma) \dd \sigma(y_\Sigma).
    \end{split}
  \end{equation}
  By combining~\eqref{limit_g} with~\eqref{NTLimitsdzSL} and using once more~\eqref{ExpansionBesselK}, we find for the non-tangential limit at $x_{\Sigma}$  
  \begin{equation*}
    \begin{split}
      \lim_{\Omega_{\pm} \ni x \to x_\Sigma}& WL(w) \varphi( x) = \pm \frac{1}{4} \overline{{\bf n}(x_\Sigma)} \varphi(x_\Sigma) \\
      &\quad + \lim_{\varepsilon \to 0^{+}} \int_{\Sigma \setminus B(x_\Sigma,\varepsilon)} \frac{i \sqrt{w}}{4 \pi} \frac{ \overline{{\bf x}_\Sigma}-\overline{{\bf y}_\Sigma}}{|x_\Sigma-y_\Sigma|} K_1(- i \sqrt{w} |x_\Sigma-y_\Sigma|) \varphi(y_\Sigma) \dd \sigma(y_\Sigma).
    \end{split}
  \end{equation*}
  Since $WL(w) \varphi \in H^{1/2}_\Delta(\mathbb{R}^2 \setminus \Sigma)$, we conclude with \cite[Thm.~3.6~(ii)]{BGM23} that the latter non-tangential limit coincides with $\gamma_D^\pm (WL(w) \varphi)_\pm$. Therefore, we finally get that
\begin{equation*} 
\begin{split}
W(w) \varphi(x_\Sigma) &= \frac{1}{2} \big( \gamma_D^{+} (WL(w) \varphi)_{+} +  \gamma_D^{-} (WL(w) \varphi)_{-} \big)(x_{\Sigma})  \\
&= \lim_{\varepsilon \to 0^{+}} \int_{\Sigma \setminus B(x_\Sigma,\varepsilon)} \frac{i \sqrt{w}}{4 \pi} \frac{ \overline{{\bf x}_\Sigma}-\overline{{\bf y}_\Sigma}}{|x_\Sigma-y_\Sigma|} K_1(- i \sqrt{w} |x_\Sigma-y_\Sigma|) \varphi(y_\Sigma) \dd \sigma(y_\Sigma),
\end{split}
\end{equation*}
 which is the claimed identity.
\end{proof}

\subsection{Generalized boundary triples} \label{section_boundary_triples}
In this section we recall the concept of generalized boundary triples, which are a useful tool from the extension and spectral theory of symmetric and self-adjoint operators. Generalized boundary triples were introduced in  \cite{DM95} and used, e.g., in \cite{BEHL18, BFKLR17, PS18, BHSLS23, BMN17, HT23, MZ24} to study differential operators with boundary or transmission conditions. The results presented here are formulated in such a way that they can be applied directly in the situations we are considering; for further results and proofs we refer to \cite{BL07, DM95}. Many of the presented results are known for closely related ordinary or quasi boundary triples \cite{BHS20, BL07}, but not formulated for generalized boundary triples, and thus, we provide them, where necessary, with proofs. In the following $\mathcal{H}$ is always a Hilbert space. 

\begin{definition} \label{Definition_QBT_GBT}
Let $S$ be a closed and symmetric operator in the Hilbert space $\mathcal{H}$ and $A$ be an operator with $\overline{A} = S^{\ast}$. A triple $\{\mathcal{G} , \Gamma_0 , \Gamma_1\}$ is called generalized boundary triple for $S^\ast$, if $\mathcal{G}$ is a Hilbert space and $\Gamma_0 ,\Gamma_1 : \Dom(A) \to \mathcal{G}$ are linear mappings  which satisfy the following conditions:
\begin{enumerate}
\item[\textup{(i)}] The restriction $A_0 := A \upharpoonright \Ker(\Gamma_0)$ is self-adjoint in $\mathcal{H}$.

\item[\textup{(ii)}] $\Ran(\Gamma_0) = \mathcal{G}$.

\item[\textup{(iii)}] For all $f,g \in \Dom(A)$ there holds the abstract Green's identity 
\begin{equation} \label{GBT_Abstract_Green}
\langle A f , g \rangle_{\mathcal{H}} - \langle f , A g \rangle_{\mathcal{H}} = \langle \Gamma_1 f, \Gamma_0 g \rangle_{\mathcal{G}} - \langle \Gamma_0 f , \Gamma_1 g \rangle_{\mathcal{G}}.
\end{equation}
\end{enumerate}
\end{definition}

The next result, which is useful in applications, allows us to check whether a given operator $A$ and two boundary maps $\Gamma_0, \Gamma_1 : \Dom(A) \to \mathcal{G}$ form a generalized boundary triple for a suitable operator $S$. It can be seen as a variant of \cite[Thm.~2.3]{BL07} or \cite[Thm.~2.1.9]{BHS20} for generalized boundary triples.

\begin{proposition} \label{GBT_RateThm}
Let $A$ be an operator in $\mathcal{H}$. Furthermore, let  $\mathcal{G}$ be a Hilbert space and $\Gamma_0, \Gamma_1 : \Dom(A) \to \mathcal{G}$ be linear mappings such that the following holds:
\begin{enumerate}
\item[\textup{(i)}] $A \upharpoonright \Ker(\Gamma_0)$ is an extension of a self-adjoint operator $A_0$ in $\mathcal{H}$.

\item[\textup{(ii)}] $\Ran(\Gamma_0) =\mathcal{G}$ and $\Ker(\Gamma_0) \cap \Ker(\Gamma_1)$ is dense in $\mathcal{H}$.

\item[\textup{(iii)}] For all $f,g \in \Dom(A)$ there holds the abstract Green's identity 
\begin{equation*}
\langle A f , g \rangle_{\mathcal{H}} - \langle f , A g\rangle_{\mathcal{H}} = \langle \Gamma_1 f, \Gamma_0 g \rangle_{\mathcal{G}} - \langle \Gamma_0 f , \Gamma_1 g \rangle_{\mathcal{G}}.
\end{equation*}
\end{enumerate}
Then $S := A \upharpoonright \Ker(\Gamma_0) \cap \Ker(\Gamma_1)$ is a closed and symmetric operator in $\mathcal{H}$ such that $S^{\ast} = \overline{A}$. In particular, $\{\mathcal{G} , \Gamma_0 ,\Gamma_1\}$ is a generalized boundary triple for $S^{\ast}$ and $A_0 = A \upharpoonright \Ker(\Gamma_0)$.
\end{proposition}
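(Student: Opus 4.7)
The plan is to establish the stronger identity $A^{\ast}=S$, from which closedness of $S$ and the resolvent-style conclusion $\overline{A}=S^{\ast}$ both fall out. Two preliminary observations come almost for free from (iii). First, $S$ is densely defined by (ii) and symmetric: for $f,g\in\Dom(S)$, Green's identity reduces to $\langle Af,g\rangle_{\mathcal{H}}=\langle f,Ag\rangle_{\mathcal{H}}$ because $\Gamma_0f=\Gamma_1f=\Gamma_0g=\Gamma_1g=0$. Second, the easy inclusion $S\subset A^{\ast}$ follows in exactly the same way: for $f\in\Dom(S)$ and $h\in\Dom(A)$, the right-hand side of \eqref{GBT_Abstract_Green} vanishes because $\Gamma_0f=\Gamma_1f=0$, so $f\in\Dom(A^{\ast})$ with $A^{\ast}f=Sf$.

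The heart of the argument is the reverse inclusion $A^{\ast}\subset S$, where assumption (ii) is used crucially. Take $g\in\Dom(A^{\ast})$. Testing against $f\in\Dom(A_0)\subset\Dom(A)$ gives $\langle A_0f,g\rangle_{\mathcal{H}}=\langle Af,g\rangle_{\mathcal{H}}=\langle f,A^{\ast}g\rangle_{\mathcal{H}}$, and self-adjointness of $A_0$ then places $g\in\Dom(A_0)$ with $A_0g=A^{\ast}g$. In particular $g\in\Ker(\Gamma_0)$ and $Ag=A_0g=A^{\ast}g$. Plugging this back into Green's identity against arbitrary $h\in\Dom(A)$, the left-hand side is zero and one is left with $\langle \Gamma_0h,\Gamma_1g\rangle_{\mathcal{G}}=0$ for every $h\in\Dom(A)$. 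Now the surjectivity $\Ran(\Gamma_0)=\mathcal{G}$ from (ii) enters decisively: it forces $\Gamma_1g=0$, so $g\in\Ker(\Gamma_0)\cap\Ker(\Gamma_1)\cap\Dom(A)=\Dom(S)$ with $A^{\ast}g=Sg$. Combining both directions, $A^{\ast}=S$.

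Closedness of $S$ is then automatic, since every adjoint is closed. Moreover $A$ is densely defined, as $\Dom(A)\supset\Dom(S)$, and closable, since $A\subset S^{\ast}$ and $S^{\ast}$ is closed; consequently $\overline{A}=A^{\ast\ast}=S^{\ast}$. Finally I would verify $A_0=A\upharpoonright\Ker(\Gamma_0)$: one inclusion is (i), while for the other I take $f\in\Dom(A)\cap\Ker(\Gamma_0)$ and apply Green's identity against any $g\in\Dom(A_0)\subset\Ker(\Gamma_0)$ to obtain $\langle Af,g\rangle_{\mathcal{H}}=\langle f,A_0g\rangle_{\mathcal{H}}$, placing $f$ in $\Dom(A_0^{\ast})=\Dom(A_0)$. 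At that point all three items of Definition~\ref{Definition_QBT_GBT} have been verified and the conclusion follows. The only subtle move I anticipate is recognising that the leftover pairing $\langle \Gamma_0h,\Gamma_1g\rangle_{\mathcal{G}}$ is precisely where the surjectivity hypothesis becomes indispensable—without it one could not conclude $\Gamma_1g=0$ and the inclusion $A^{\ast}\subset S$ would break down.
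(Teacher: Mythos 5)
Your proof is correct, and it takes a genuinely different (and arguably cleaner) route than the paper. The paper's proof proceeds in three steps: first it shows $A_0 = A\upharpoonright\Ker(\Gamma_0)$ from Green's identity and the fact that a symmetric extension of a self-adjoint operator must coincide with it; second it invokes a lemma on dual pairs (from a cited reference) to establish that $\Ran(\Gamma_0,\Gamma_1)$ is dense in $\mathcal{G}\times\mathcal{G}$; and third it applies a theorem on quasi boundary triples (again cited) to conclude $S^*=\overline{A}$. You instead compute $A^*=S$ directly by hand, and $S^*=\overline{A}$ then follows from $A^{**}=\overline{A}$ and closability of $A$. Your argument is self-contained and avoids both of the cited external results, at the modest cost of a somewhat longer chain of elementary adjoint manipulations. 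One small stylistic observation: the identity $A_0=A\upharpoonright\Ker(\Gamma_0)$, which you verify at the end, could be established more quickly the way the paper does it — Green's identity shows $A\upharpoonright\Ker(\Gamma_0)$ is symmetric, and a symmetric extension of a self-adjoint operator equals that operator — but your argument via $\Dom(A_0^*)=\Dom(A_0)$ is equally valid. The point you flag at the end, that surjectivity of $\Gamma_0$ is what forces $\Gamma_1 g=0$ and hence $A^*\subset S$, is indeed precisely where the hypothesis $\Ran(\Gamma_0)=\mathcal{G}$ does its work.
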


\begin{proof}
First, by the abstract Green's identity in~(iii) the operator $A \upharpoonright \Ker(\Gamma_0)$ is symmetric in $\mathcal{H}$. Since $A \upharpoonright \Ker(\Gamma_0)$ is an extension of a self-adjoint operator $A_0$ by~(i), this can only be true if $A_0 = A \upharpoonright \Ker(\Gamma_0)$. 

Next, we show that the assumption $\Ran(\Gamma_0) = \mathcal{G}$ implies that $\Ran(\Gamma_0, \Gamma_1)$ is dense in $\mathcal{G} \times \mathcal{G}$. Indeed, as $A_0 = A \upharpoonright \Ker(\Gamma_0)$ is self-adjoint, we see that $\{ \mathcal{G}, (\Gamma_0, \Gamma_1), (\Gamma_0, \Gamma_1) \}$ is a triple for the dual pair $\{ \overline{S}, \overline{S} \}$ in the sense of \cite[Def.~2.1]{B23}. Hence, taking the assumptions (i)--(iii) into account, we can apply \cite[Lem.~2.3]{B23}, which shows that $\Ran(\Gamma_0, \Gamma_1)$ is dense in $\mathcal{G} \times \mathcal{G}$.

By the shown density of $\Ran(\Gamma_0, \Gamma_1)$  in $\mathcal{G} \times \mathcal{G}$ and the assumptions in~(i) and~(iii) we can apply \cite[Thm.~2.3]{BL07}, which implies that
 $S := A \upharpoonright \Ker(\Gamma_0) \cap \Ker(\Gamma_1)$ is a closed and symmetric operator in $\mathcal{H}$ such that $S^{\ast} = \overline{A}$. Thus, all assumptions in Definition~\ref{Definition_QBT_GBT} are fulfilled, $\{\mathcal{G} , \Gamma_0 ,\Gamma_1\}$ is a generalized boundary triple for $S^{\ast}$, and all claims are shown. 
\end{proof}

Next, the $\gamma$-field and the Weyl function of a given generalized boundary triple will be defined. These operator-valued functions are used in applications to study spectral properties of the self-adjoint extensions of the symmetric operator $S$. Let $\{ \mathcal{G}, \Gamma_0 , \Gamma_1 \}$ be a generalized boundary triple for $\overline{A} = S^{\ast}$ and $A_0 = A \upharpoonright \Ker(\Gamma_0)$. Then for each $w \in \rho(A_0)$ the direct sum decomposition 
\begin{equation}\label{Decomposition_DomA}
\Dom(A) = \Dom(A_0) \dot + \Ker(A-w) = \Ker(\Gamma_0) \dot + \Ker(A-w)
\end{equation}
is valid. In particular, the operator $\Gamma_0 \upharpoonright \Ker(A-w) : \Ker(A-w) \to \mathcal{G}$ is bijective. 

\begin{definition} \label{Definition_Gamma_Weyl}
Let $\{ \mathcal{G} , \Gamma_0 , \Gamma_1\}$ be a generalized boundary triple for $S^{\ast}$. Then the corresponding $\gamma$-field $\gamma$ and Weyl function $M$ are defined by
\begin{equation*}
\rho(A_0) \ni w \mapsto \gamma(w) := \big( \Gamma_0 \upharpoonright \Ker(A-w)  \big)^{-1} : \mathcal{G} \to \mathcal{H}
\end{equation*}
and 
\begin{equation*}
\rho(A_0) \ni w \mapsto M(w) := \Gamma_1 \big( \Gamma_0 \upharpoonright \Ker(A-w)  \big)^{-1}  : \mathcal{G} \to \mathcal{G},
\end{equation*}
respectively.
\end{definition}

The next result contains a number of useful properties of the $\gamma$-field and the Weyl function. The proof of these statements can be found in \cite[Prop.~2.6 and Sect.~3.2]{BL07} noting that each generalized boundary triple is also a quasi boundary triple in the sense of \cite{BL07}, see \cite[Sect.~3.2]{BL07}.

\begin{proposition} \label{Properties_Gamma_Weyl}
Let $\{ \mathcal{G} , \Gamma_0 , \Gamma_1\}$ be a generalized boundary triple for $\overline{A} = S^{\ast}$, $A_0 := A \upharpoonright \Ker(\Gamma_0)$, and denote by $\gamma$ and $M$ the associated $\gamma$-field and Weyl function, respectively. Then the following holds:
\begin{enumerate}
\item[\textup{(i)}]  $\gamma(w) : \mathcal{G} \to \mathcal{H}$ is bounded and $\Ran(\gamma(w)) \subseteq \Ker(A - w)$ for every $w \in \rho(A_0)$.

\item[\textup{(ii)}] For every $w \in \rho(A_0)$ one has $\gamma(w)^{\ast} = \Gamma_1 (A_0 - \overline{w})^{-1}$. 

\item[\textup{(iii)}]  $M(w) : \mathcal{G} \to \mathcal{G}$ is a linear and bounded operator for every $w \in \rho(A_0)$. Furthermore, for all $w , z \in \rho(A_0)$
\begin{equation*}
M(z) - M(w)^{\ast} =(z - \overline{w}) \gamma(w)^{\ast} \gamma(z)
\end{equation*}
holds. In particular, $M(w) = M(\overline{w})^{\ast}$ and $w \mapsto M(w)$ is holomorphic.
\end{enumerate}
\end{proposition}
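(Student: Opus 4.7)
The plan is to prove the three items by applying the abstract Green's identity \eqref{GBT_Abstract_Green}, in the spirit of the classical treatment for quasi boundary triples in \cite[Prop.~2.6 and Sect.~3.2]{BL07}.

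For (i), the inclusion $\Ran(\gamma(w)) \subseteq \Ker(A-w)$ is immediate from the definition of $\gamma(w)$, and the direct sum decomposition \eqref{Decomposition_DomA} together with $\Ran(\Gamma_0) = \mathcal{G}$ shows that $\Gamma_0 \upharpoonright \Ker(A-w)$ is a bijection, so $\gamma(w)$ is defined on all of $\mathcal{G}$. Boundedness then follows from the closed graph theorem once I verify closedness of $\gamma(w)$: if $\varphi_n \to \varphi$ in $\mathcal{G}$ and $\gamma(w)\varphi_n \to h$ in $\mathcal{H}$, closability of $A$ (which holds since $\overline{A} = S^{\ast}$) yields $h \in \Dom(\overline{A})$ with $\overline{A} h = w h$; a careful use of the decomposition of $\Dom(\overline{A})$ relative to $A_0$ then places $h$ back into $\Dom(A) \cap \Ker(A-w)$ with $\Gamma_0 h = \varphi$, whence $h = \gamma(w)\varphi$.

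For (ii), I fix $\varphi \in \mathcal{G}$ and $h \in \mathcal{H}$ and apply \eqref{GBT_Abstract_Green} to $f := \gamma(w)\varphi$ and $g := (A_0-\overline{w})^{-1} h \in \Dom(A_0) = \Ker(\Gamma_0)$. Since $Af = wf$ and $\Gamma_0 g = 0$, the identity collapses to
\begin{equation*}
\langle \gamma(w)\varphi, h \rangle_{\mathcal{H}} = \langle \gamma(w)\varphi, (A_0-\overline{w})g \rangle_{\mathcal{H}} = \langle \varphi, \Gamma_1 (A_0-\overline{w})^{-1} h \rangle_{\mathcal{G}},
\end{equation*}
which identifies $\gamma(w)^{\ast} = \Gamma_1 (A_0-\overline{w})^{-1}$. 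For (iii), a similar application with $f = \gamma(z)\varphi$ and $g = \gamma(w)\psi$, using $Af = zf$, $Ag = wg$, $\Gamma_0 f = \varphi$, $\Gamma_0 g = \psi$, $\Gamma_1 f = M(z)\varphi$, $\Gamma_1 g = M(w)\psi$, and antilinearity of the inner product in the first slot, gives
\begin{equation*}
(\overline{z}-w)\langle \gamma(z)\varphi, \gamma(w)\psi \rangle_{\mathcal{H}} = \langle M(z)\varphi, \psi \rangle_{\mathcal{G}} - \langle \varphi, M(w)\psi \rangle_{\mathcal{G}}.
\end{equation*}
Pulling the scalar across the first inner product by antilinearity (which produces the coefficient $z - \overline{w}$) and rewriting $\langle \gamma(z)\varphi, \gamma(w)\psi\rangle_{\mathcal{H}} = \langle \gamma(w)^{\ast}\gamma(z)\varphi, \psi\rangle_{\mathcal{G}}$ and $\langle \varphi, M(w)\psi\rangle_{\mathcal{G}} = \langle M(w)^{\ast}\varphi, \psi\rangle_{\mathcal{G}}$, one arrives at $M(z) - M(w)^{\ast} = (z - \overline{w}) \gamma(w)^{\ast} \gamma(z)$ upon dropping arbitrary $\varphi, \psi$. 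The special case $z = \overline{w}$ yields $M(w) = M(\overline{w})^{\ast}$, and boundedness of $M(w) = \Gamma_1\gamma(w)$ together with holomorphy of $w \mapsto M(w)$ follow standardly from the just-derived resolvent-type identity combined with holomorphy of $(A_0 - w)^{-1}$ and the elementary relation $\gamma(w) - \gamma(z_0) = (w-z_0)(A_0-w)^{-1}\gamma(z_0)$ for any fixed $z_0 \in \rho(A_0)$.

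The main technical obstacle I anticipate is the boundedness assertion in (i): for ordinary boundary triples it is near-automatic, whereas in the generalized setting $A$ need not be closed, so one must carefully exploit closability and the decomposition of $\Dom(\overline{A})$ to ensure that the limiting element $h$ lands in $\Dom(A)$ rather than merely in $\Dom(\overline{A})$. Once this is established, items (ii) and (iii) reduce to formal algebraic manipulations of the abstract Green's identity.
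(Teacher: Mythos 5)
The paper itself gives no proof of this proposition and simply defers to \cite[Prop.~2.6 and Sect.~3.2]{BL07}, so your attempt is a self-contained reconstruction. Your Green's-identity computations for (ii) and (iii) are correct (I checked the antilinearity bookkeeping; the factor $(z-\overline{w})$ does come out right). The genuine gap is in (i). You correctly deduce that a limit $h$ of $\gamma(w)\varphi_n$ lies in $\Ker(\overline{A}-w)$, but the step ``a careful use of the decomposition of $\Dom(\overline{A})$ relative to $A_0$ then places $h$ back into $\Dom(A)\cap\Ker(A-w)$'' is not available. The decomposition $\Dom(\overline{A}) = \Dom(A_0)\dotplus\Ker(\overline{A}-w)$ only confirms that the $\Dom(A_0)$-component of $h$ vanishes; it gives no mechanism to descend from $\Ker(\overline{A}-w)$ to $\Ker(A-w)$, and in general this inclusion is strict because $A$ need not be closed and $\Gamma_0$ is only defined on $\Dom(A)$. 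This is precisely the feature that separates generalized and quasi boundary triples from ordinary ones, so it cannot be waved away.

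The repair is to reorder the argument: prove (ii) first, since that computation with \eqref{GBT_Abstract_Green} is purely algebraic and uses no boundedness of $\gamma(w)$, yielding $\langle \gamma(w)\varphi, h\rangle_{\mathcal{H}} = \langle \varphi, \Gamma_1(A_0-\overline{w})^{-1}h\rangle_{\mathcal{G}}$ for all $\varphi\in\mathcal{G}$, $h\in\mathcal{H}$. Closedness of the everywhere-defined $\gamma(w)$ then follows immediately: if $\varphi_n\to\varphi$ and $\gamma(w)\varphi_n\to g$, then for every $h$ one has $\langle g,h\rangle = \lim_n\langle\varphi_n,\Gamma_1(A_0-\overline{w})^{-1}h\rangle = \langle\varphi,\Gamma_1(A_0-\overline{w})^{-1}h\rangle = \langle\gamma(w)\varphi,h\rangle$, so $g=\gamma(w)\varphi$, and the closed graph theorem gives boundedness. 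Note that this does not even require boundedness of $\Gamma_1(A_0-\overline{w})^{-1}$. A parallel remark applies to the boundedness of $M(w)$ in (iii), which you pass over: the identity $M(\overline{w}) = M(w)^{\ast}$ exhibits $M(\overline{w})$ as an everywhere-defined adjoint, hence closed, and the closed graph theorem then gives boundedness; the resolvent-type identity alone, which only bounds differences $M(z)-M(w)^{\ast}$, does not suffice.
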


Assume that $\{ \mathcal{G}, \Gamma_0, \Gamma_1 \}$ is a fixed generalized boundary triple for $\overline{A}  = S^*$, and let $B$ be a compact and self-adjoint operator in $\mathcal{G}$. In the following, we consider the extension 
\begin{equation} \label{def_A_B_abstract}
  A_B := A \upharpoonright \Ker(\Gamma_0 + B \Gamma_1)
\end{equation}
of $S$, i.e. $f \in \Dom(A_B)$ if and only if $f \in \Dom(A)$ satisfies the abstract boundary conditions $\Gamma_0 f + B \Gamma_1 f = 0$. In the next theorem we discuss how the spectral properties of $A_B$ can be described with the help of the $\gamma$-field and Weyl function associated with the triple $\{ \mathcal{G}, \Gamma_0, \Gamma_1 \}$. It can be seen as a variant of \cite[Thms.~2.6.1\&2.6.2]{BHS20} or \cite[Thm.~2.8]{BL07} for generalized boundary triples, and it is useful in the situation that $B$ admits a splitting $B = B_1 B_2$ with $B_1 : \mathcal{K} \to \mathcal{G}$ and $B_2 : \mathcal{G} \to \mathcal{K}$, where $\mathcal{K}$ is another Hilbert space.
In particular, item~(i) contains an abstract version of the Birman-Schwinger principle to characterize the  eigenvalues of $A_B$ and in assertion~(iii) a Krein type resolvent formula is stated.

\begin{theorem} \label{GBT_TB_SA}
Let $B  = B^{\ast} : \mathcal{G} \to \mathcal{G}$ be a compact operator, $\mathcal{K}$ be a Hilbert space and $B_1 : \mathcal{K} \to \mathcal{G}$ and $B_2 : \mathcal{G} \to \mathcal{K}$ be bounded operators such that $B = B_1 B_2$. Then, the operator $A_B$ defined by~\eqref{def_A_B_abstract} is self-adjoint in $\mathcal{H}$ and the following holds.
\begin{enumerate}
\item[\textup{(i)}] For every $w \in \rho(A_0)$ one has
\begin{equation} \label{Birman_Schwinger}
\Ker(A_B - w) = \gamma(w) B_1 \Ker (I + B_2 M(w) B_1).
\end{equation}
In particular, $w \in \sigma_{\textup{p}}(A_B)$ if and only if $0 \in \sigma_{\textup{p}}(I + B_2 M(w) B_1)$.

\item[\textup{(ii)}] For every $w \in \rho(A_0) \setminus \sigma_{\textup{p}}(A_B)$ the operator $I + B_2 M(w) B_1 : \mathcal{K} \to \mathcal{K}$ is boundedly invertible.

\item[\textup{(iii)}] There holds $\rho(A_0) \setminus \sigma_{\textup{p}}(A_B) = \rho(A_0) \cap \rho(A_B)$ and for every $w \in \rho(A_B) \cap \rho(A_0)$ the resolvent representation
\begin{equation*}
( A_B - w )^{-1} = ( A_0 - w )^{-1} - \gamma(w) B_1 \big( I + B_2 M(w) B_1 \big)^{-1} B_2 \gamma(\overline{w})^{\ast}
\end{equation*}
is valid.
\end{enumerate}
\end{theorem}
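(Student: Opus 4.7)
My plan is to first establish that $A_B$ is symmetric via the abstract Green identity together with $B = B^{*}$, then prove, as the technical heart, that $I + B M(w) : \mathcal{G} \to \mathcal{G}$ is boundedly invertible for every $w \in \C \setminus \R$, and finally derive self-adjointness together with (i), (ii), and (iii) from this by means of an algebraic ``swap'' between $(I + BM(w))$ on $\mathcal{G}$ and $(I + B_2 M(w) B_1)$ on $\mathcal{K}$.

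For symmetry, I plug $\Gamma_0 f = -B\Gamma_1 f$ and $\Gamma_0 g = -B\Gamma_1 g$ into~\eqref{GBT_Abstract_Green}; the right-hand side vanishes because $B$ is self-adjoint. For the key invertibility, $M(w)$ is bounded by Proposition~\ref{Properties_Gamma_Weyl}~(iii) and $B$ is compact, so $BM(w)$ is compact and $I + BM(w)$ is Fredholm of index $0$. Injectivity is obtained by a mini Birman--Schwinger argument: if $(I + BM(w))u = 0$ then $f := \gamma(w) u$ satisfies $\Gamma_0 f = u$ and $\Gamma_1 f = M(w) u$, hence $f \in \Dom(A_B)$ and $A_B f = w f$; the already-proven symmetry together with $w \notin \R$ forces $f = 0$, and then $u = \Gamma_0 \gamma(w) u = 0$.

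I then use this to solve $(A_B - w) f = g$ for arbitrary $g \in \mathcal{H}$ and $w \in \C \setminus \R$: setting $u_0 := (A_0 - w)^{-1} g$ and writing $f = u_0 + v$ with $v \in \Ker(A - w)$, the boundary condition $\Gamma_0 f + B\Gamma_1 f = 0$ together with $\Gamma_1 u_0 = \gamma(\overline{w})^{*} g$ (Proposition~\ref{Properties_Gamma_Weyl}~(ii)) reduces to $(I + BM(w))\Gamma_0 v = -B \gamma(\overline{w})^{*} g$, uniquely solvable by the previous step. This proves $\Ran(A_B - w) = \mathcal{H}$ and yields the preliminary resolvent formula
\begin{equation*}
(A_B - w)^{-1} = (A_0 - w)^{-1} - \gamma(w)(I + BM(w))^{-1} B \gamma(\overline{w})^{*}.
\end{equation*}
The algebraic identities
\begin{equation*}
(I + B_2 M(w) B_1)^{-1} = I - B_2 M(w)(I + BM(w))^{-1} B_1, \qquad (I + BM(w))^{-1} B = B_1 (I + B_2 M(w) B_1)^{-1} B_2,
\end{equation*}
verified by direct multiplication using $B = B_1 B_2$, then transfer the invertibility to $\mathcal{K}$ and cast the resolvent in the form claimed in (iii).

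Finally, (i) follows because $w \in \rho(A_0)$ forces any $f \in \Ker(A_B - w)$ to lie in $\Ker(A - w)$ via the decomposition~\eqref{Decomposition_DomA}, hence $f = \gamma(w)\Gamma_0 f$ with $\Gamma_0 f \in \Ker(I + BM(w)) \subset \Ran(B_1)$; writing $\Gamma_0 f = B_1 \xi$ rephrases this as $\xi \in \Ker(I + B_2 M(w) B_1)$, and the opposite inclusion is a direct check using the definitions of $\gamma$ and $M$. For (ii) I repeat the Fredholm and Birman--Schwinger argument at any $w \in \rho(A_0) \setminus \sigma_{\textup{p}}(A_B)$, where injectivity of $I + BM(w)$ is now immediate from the hypothesis, and apply the swap identity. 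The main technical obstacle I anticipate is the ordering of the steps: the invertibility of $I + BM(w)$ must be established using only the symmetry of $A_B$, not its self-adjointness, since self-adjointness itself is being derived from it; and the equivalence $\rho(A_0) \setminus \sigma_{\textup{p}}(A_B) = \rho(A_0) \cap \rho(A_B)$ in (iii) must be read off from the explicit resolvent formula to avoid circularity.
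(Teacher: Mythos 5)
Your proof is correct, but it is organized quite differently from the paper's. You take symmetry of $A_B$ as the starting point, use it together with compactness of $B$ to show that $I + BM(w)$ is Fredholm of index $0$ and injective for $w \in \C \setminus \R$, derive the preliminary resolvent formula with $(I + BM(w))^{-1}B$, and then move everything over to $\mathcal{K}$ via the algebraic swap identities $(I + B_2 M(w) B_1)^{-1} = I - B_2 M(w)(I + BM(w))^{-1} B_1$ and $(I + BM(w))^{-1}B = B_1(I + B_2 M(w) B_1)^{-1} B_2$. The paper instead proves the kernel identity~(i) first and completely independently of self-adjointness, by identifying $\Ker(A_B - w)$ with $\gamma(w)B_1\Ker(I + B_2 M(w) B_1)$ directly; the technical point there is that if $\Gamma_0 f = B_1\psi_1$, then $(I + B_2 M(w) B_1)\psi_1$ need only lie in $\Ker(B_1)$, not vanish, so they replace $\psi_1$ by $\psi = \psi_1 - (I + B_2 M(w) B_1)\psi_1$. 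For~(ii) they then invoke (i) with $B_1 = I$, $B_2 = B$ together with the spectral identity $\sigma(B_2 M(w) B_1)\setminus\{0\} = \sigma(BM(w))\setminus\{0\}$ and the compactness of $BM(w)$, rather than reproving a Fredholm/injectivity argument. Self-adjointness is deduced last, from symmetry and surjectivity at $\pm i$, as in your proof.

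Both approaches work and buy slightly different things: yours gives the explicit intertwining formulas between the resolvent expressions in $\mathcal{G}$ and in $\mathcal{K}$, which is informative; the paper's kernel computation in (i) is self-contained and does not pass through invertibility of $I + BM(w)$ at all. One small point you should tighten: when proving (i) you write ``$\Gamma_0 f \in \Ker(I + BM(w)) \subset \Ran(B_1)$; writing $\Gamma_0 f = B_1\xi$ rephrases this as $\xi \in \Ker(I + B_2 M(w) B_1)$.'' An arbitrary preimage $\xi$ under $B_1$ satisfies only $B_1(I + B_2 M(w) B_1)\xi = 0$, so $\xi$ need not lie in $\Ker(I + B_2 M(w) B_1)$; you must either choose the specific preimage $\xi = -B_2 M(w) \Gamma_0 f$ (for which the computation closes) or, as the paper does, correct a general $\xi$ by an element of $\Ker(B_1)$.
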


\begin{proof}
One could follow directly the arguments in the proof of, e.g., \cite[Thm.~2.8]{BL07}. However, since this special situation (where $B$ is decomposed as a product $B=B_1B_2$, where $B_1, B_2$ act in different Hilbert spaces) has not been, to the best of our knowledge, treated in the literature so far, we give a full proof here.

To show (i), assume first that $f \in \Ker(A_B - w) \setminus \{ 0 \}$ for some $w \in \rho(A_0)$. As the inclusion $\Ker(A_B - w) \subset \Ker(A - w)$ holds, there exists, by the definition of the $\gamma$-field, $\varphi \in \mathcal{G}$ such that $f = \gamma(w) \varphi$. Since $f$ also belongs to $\Dom(A_B)$, we have
\begin{equation} \label{Birman_Schwinger1}
  0 = (\Gamma_0 + B_1 B_2 \Gamma_1) f = \big( I + B_1 B_2 M(w) \big) \varphi \quad \Leftrightarrow \quad \varphi  = -B_1 B_2 M(w) \varphi.
\end{equation}
In particular, $\varphi \in \Ran(B_1)$, i.e. there exists $\psi_1 \in \mathcal{K}$ such that $\varphi = B_1 \psi_1$. Using this representation in~\eqref{Birman_Schwinger1}, we get that
\begin{equation*} 
  0 = B_1 \big( I + B_2 M(w) B_1  \big) \psi_1,
\end{equation*}
i.e. $\psi_2 := ( I + B_2 M(w) B_1  ) \psi_1 \in \Ker(B_1)$. Therefore, $\psi := \psi_1 - \psi_2$ satisfies $f = \gamma(w) \varphi = \gamma(w) B_1 \psi$, so in particular $\psi \neq 0$, and
\begin{equation*}
  \begin{split}
    \big( I + B_2 M(w) B_1  \big) \psi &= \big( I + B_2 M(w) B_1 \big) \psi_1 - \big( I + B_2 M(w) B_1  \big) \psi_2 \\
    &= \big( I + B_2 M(w) B_1 \big) \psi_1 - \psi_2 = 0,
  \end{split}
\end{equation*}
where $\psi_2 \in \Ker(B_1)$ was used in the second step. This shows the inclusion '$\subset$' in~\eqref{Birman_Schwinger}.

For the converse inclusion, assume that $\psi \in \Ker( I + B_2 M(w) B_1 ) \setminus \{ 0 \}$, which implies $B_1 \psi \neq 0$. Thus, $f := \gamma(w) B_1 \psi \neq 0$. Then, $f \in \Ker(A-w)$ and
\begin{equation*}
  (\Gamma_0 + B \Gamma_1)f = B_1 \psi + B_1 B_2 M(w) B_1 \psi = 0.
\end{equation*}
Therefore, $f \in \Ker(A-w) \cap \Dom (A_B) = \Ker (A_B - w)$, which shows that also the inclusion '$\supset$' in~\eqref{Birman_Schwinger} is true and finishes the proof of item~(i).

To show~(ii), assume that $w \in \rho(A_0) \setminus \sigma_{\textup{p}}(A_B)$ is fixed. Then by~\eqref{Birman_Schwinger} applied with $B_1 = I$ and $B_2=B$ one has $-1 \notin \sigma_{\textup{p}}(B M(w))$. Since $B$ is compact and $M(w)$ is bounded by Proposition~\ref{Properties_Gamma_Weyl}~(iii), it follows with \cite[Prop.~2.1.8]{P94} 
\begin{equation*}
\sigma(B_2 M(w) B_1 ) \setminus \{ 0\} = \sigma(B M(w)) \setminus \{0 \} = \sigma_{\textup{p}}(B M(w)) \setminus \{0 \}.
\end{equation*}
Consequently, $-1 \in \rho(B_2 M(w) B_1)$, i.e. $I + B_2 M(w) B_1 : \mathcal{K} \to \mathcal{K}$ is boundedly invertible.

Next, we will prove item~(iii). The inclusion $\rho(A_0) \setminus \sigma_{\textup{p}}(A_B) \supset \rho(A_0) \cap \rho(A_B)$ is evident. We will show the inclusion $\rho(A_0) \setminus \sigma_{\textup{p}}(A_B) \subset \rho(A_0) \cap \rho(A_B)$ and the resolvent formula at once. For this purpose, let $w \in \rho(A_0) \setminus \sigma_{\textup{p}}(A_B)$ be fixed. Then $A_B - w :\Dom(A_B) \to \mathcal{H}$ is injective. To show the surjectivity of this operator, we define for a given $f \in \mathcal{H}$ the element $g \in \mathcal{H}$ by
	\begin{equation*}
		g := (A_0 - w )^{-1} f - \gamma(w) B_1(I+B_2M(w)B_1)^{-1} B_2 \gamma(\overline{w})^\ast f.
	\end{equation*}
	Note that $g$ is indeed well-defined due to the bijectivity of $I + B_2 M(w) B_1 : \mathcal{K} \to \mathcal{K}$ shown above and that $g \in \Dom(A)$. The definition of the Weyl function and Proposition \ref{Properties_Gamma_Weyl} (ii) yield
	\begin{equation*}
		\begin{split}
		  \Gamma_0 g + B \Gamma_1 g  
		  &=  -B_1(I+B_2M(w)B_1)^{-1} B_2 \gamma(\overline{w})^\ast f + B \Gamma_1 (A_0 - w )^{-1} f \\
		  & \qquad  - B M(w) B_1(I+B_2M(w)B_1)^{-1} B_2 \gamma(\overline{w})^\ast f \\
		  &= 0,
        \end{split}
	\end{equation*}
which implies $g \in \Dom(A_B)$. Eventually, it follows from Proposition \ref{Properties_Gamma_Weyl} (i) that $(A_B - w) g = f$ holds. This yields the bijectivity of the operator $A_B - w : \Dom(A_B) \to \mathcal{H}$, i.e. $w \in \rho(A_0) \cap \rho(A_B)$, and the resolvent formula
	\begin{equation*}
		(A_B-w)^{-1}f = g = (A_0-w)^{-1}f- \gamma(w) B_1(I+B_2M(w)B_1)^{-1} B_2 \gamma(\overline{w})^\ast f 
	\end{equation*}
	for all $f \in \mathcal{H}$.

Finally, we will prove the self-adjointness of $A_B$. Since $B$ is self-adjoint, it follows from the abstract Green's  identity \eqref{GBT_Abstract_Green} that 
 	\begin{equation*}
 		\begin{split}
 			\langle A_B f , g \rangle_{\mathcal{H}} - \langle f , A_B g \rangle_{\mathcal{H}} &= \langle \Gamma_1 f, \Gamma_0 g \rangle_{\mathcal{G}} - \langle \Gamma_0 f , \Gamma_1 g \rangle_{\mathcal{G}} \\
 			&= - \langle\Gamma_1 f, B \Gamma_1 g \rangle_{\mathcal{G}} + \langle B\Gamma_1 f , \Gamma_1 g \rangle_{\mathcal{G}} = 0
 		\end{split}
 \end{equation*}
 is valid for all $f,g \in \Dom(A_B)$, i.e.  $A_B$ is symmetric.  Thus, $\sigma_{\textup{p}}(A_B) \subseteq \mathbb{R}$. Moreover, it was deduced in the previous paragraph that, for every $w \in \rho(A_0) \setminus \sigma_{\textup{p}}(A_B)$, $\Ran(A_B - w) =\mathcal{H}$. Choosing $w=\pm i$, we conclude that $A_B$ is self-adjoint.
\end{proof}

\section{Definition and spectral properties of $T_B$} \label{section_T_B}

In this section we rigorously define and study the operator $T_B$ that is defined by \eqref{eq:def_TB_alt}. For this, we construct first in Section~\ref{section_GBT_T_B} a generalized boundary triple that is suitable to investigate $T_B$. With the help of this generalized boundary triple we show the self-adjointness of $T_B$ in Section~\ref{section_def_selfadjointness}. Finally, Section~\ref{section_spectrum} is devoted to the spectral analysis of $T_B$. Recall that $\Omega_+ \subset \mathbb{R}^2$ is a bounded open set with a Lipschitz continuous boundary $\partial \Omega_+ = \Sigma$ as described in Section~\ref{section_function_spaces}. Moreover, $\Omega_- = \mathbb{R}^2 \setminus \overline{\Omega_+}$ and hence $ \partial \Omega_- = \partial \Omega_+ = \Sigma$.

\subsection{A generalized boundary triple suitable for the study of $T_B$} \label{section_GBT_T_B}

Define in $L^2(\mathbb{R}^2)$ the operator $T$ by
\begin{equation} \label{def_T}
\begin{split}
T f &= ( - \Delta f_{+} ) \oplus ( - \Delta f_{-} ), \\
\text{Dom}(T) &= \left\{ f \in H_{\Delta}^{1/2}(\mathbb{R}^2 \setminus \Sigma) \: \big| \: \partial_{\overline{z}} f_\pm \in H^{1/2}(\Omega_\pm) \right\},
\end{split}
\end{equation}
where $H_{\Delta}^{1/2}(\mathbb{R}^2 \setminus \Sigma)$ is defined by~\eqref{def_H_s_Delta_Sigma}.
We note that, using $4 \dz \dzbar = \Delta$,  one can represent $\Dom(T)$ equivalently as 
\begin{equation*}
\text{Dom}(T) = \left\{ f \in H_{\dzbar}^{1/2}(\Omega_{+}) \oplus H_{\dzbar}^{1/2}(\Omega_{-}) \: \big| \: \dzbar f_{\pm} \in   H_{\dz}^{1/2}(\Omega_{\pm}) \right\},
\end{equation*}
where the spaces $H_{\dz}^{1/2}(\Omega_{\pm})$ and $H_{\dzbar}^{1/2}(\Omega_{\pm})$ are defined by \eqref{eq_Hdz_bar} and \eqref{eq_Hdz}.
Hence, taking Lemma~\ref{Trace_Extension} into account, we can define the maps $\Gamma_0, \Gamma_1 : \text{Dom}(T) \to L^2(\Sigma; \mathbb{C}^2)$ acting on $f \in \Dom(T)$ by
\begin{equation} \label{def_Gamma}
\Gamma_0 f:=2 \begin{pmatrix}
				 \bar{\bn} ( \gamma_D^{+} \dzbar f_{+} - \gamma_D^{-} \dzbar f_{-} ) \\
				\bn (\gamma_D^{+} f_{+} - \gamma_D^{-} f_{-} ) 
				\end{pmatrix} \quad \text{and} \quad
\Gamma_1 f:= \frac{1}{2} \begin{pmatrix}
				 \gamma_D^{+} f_{+} +  \gamma_D^{-} f_{-}   \\
				- \gamma_D^{+} \dzbar f_{+} - \gamma_D^{-} \dzbar f_{-} 
				\end{pmatrix},
\end{equation}
where $\bn = n_1 + i n_2$ with $n=(n_1,n_2)$ being the unit outward normal vector to $\Omega_+$.
In the following proposition we show that $\{ L^2(\Sigma; \mathbb{C}^2), \Gamma_0, \Gamma_1 \}$ is a generalized boundary triple for $\overline{T}$. Recall that for $w \in \mathbb{C} \setminus [0, +\infty)$ the operators $SL(w)$ and $WL(w)$ are defined by~\eqref{def_single_layer_potential} and~\eqref{def_Psi}, respectively, and that $S(w)$ and $W(w)$ are the boundary integral operators introduced in~\eqref{def_single_layer_boundary_integral_operator} and~\eqref{def_L}, respectively.

\begin{proposition} \label{IsQBT}
The triple $\{L^2(\Sigma; \mathbb{C}^2), \Gamma_0, \Gamma_1\}$ is a generalized boundary triple for $\overline{T}$ such that $T \upharpoonright \Ker ( \Gamma_0) = - \Delta$, where $-\Delta$ is the free Laplacian defined on $\Dom(-\Delta) = H^2(\mathbb{R}^2)$. Moreover, for all $w\in \mathbb{C}\setminus[0,+\infty)$ the following is true:
\begin{itemize}
  \item[\textup{(i)}] The values of the $\gamma$-field $\gamma(w): L^2(\Sigma; \mathbb{C}^2) \rightarrow L^2(\mathbb{R}^2)$ are acting on $\varphi = (\varphi_1, \varphi_2) \in L^2(\Sigma; \mathbb{C}^2)$ as
  \begin{equation*}
    \gamma(w) \varphi = SL(w) \varphi_1 + WL(w) \varphi_2.
  \end{equation*}

  \item[\textup{(ii)}] For $f \in L^2(\mathbb{R}^2)$ there holds the representation 
  \begin{equation*}
    \gamma(w)^{\ast} f = \begin{pmatrix}
SL(w)^{\ast} f \\
WL(w)^{\ast} f 
\end{pmatrix} =  \begin{pmatrix}
	  \gamma_D (- \Delta - \overline{w} )^{-1} f \\
	   - \gamma_D \dzbar (- \Delta - \overline{w} )^{-1} f
    \end{pmatrix}.
  \end{equation*}
  In particular, $\gamma(w)^{\ast}$ gives rise to a bounded operator $\gamma(w)^{\ast} : L^2(\mathbb{R}^2) \to H^{1}(\Sigma) \times H^{1/2}(\Sigma)$.

  \item[\textup{(iii)}] The values of the Weyl function $M(w): L^2(\Sigma; \mathbb{C}^2) \rightarrow L^2(\Sigma; \mathbb{C}^2)$ are acting on $\varphi = (\varphi_1, \varphi_2) \in L^2(\Sigma; \mathbb{C}^2)$ as
  \begin{equation*}
    M(w) \varphi = \begin{pmatrix}
	  S(w) & W(w) \\
	   W(\overline{w})^{\ast} & \frac{w}{4} S(w)
    \end{pmatrix} \begin{pmatrix} \varphi_1 \\ \varphi_2 \end{pmatrix}.
  \end{equation*}
\end{itemize}
\end{proposition}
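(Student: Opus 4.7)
The plan is to apply Proposition~\ref{GBT_RateThm} with $A = T$ and reference self-adjoint operator $A_0 = -\Delta$ on $H^2(\mathbb{R}^2)$. Three hypotheses must be checked: (a) $-\Delta \subseteq T \upharpoonright \Ker(\Gamma_0)$; (b) $\Ran(\Gamma_0) = L^2(\Sigma;\mathbb{C}^2)$ and $\Ker(\Gamma_0) \cap \Ker(\Gamma_1)$ is dense in $L^2(\mathbb{R}^2)$; (c) the abstract Green identity. Items (a) and the density part of (b) are immediate: for $f \in H^2(\mathbb{R}^2)$ the interior and exterior Dirichlet traces of $f$ and of $\dzbar f$ coincide, so $\Gamma_0 f = 0$; and $C_0^\infty(\mathbb{R}^2 \setminus \Sigma) \subseteq \Ker(\Gamma_0) \cap \Ker(\Gamma_1)$ is dense in $L^2(\mathbb{R}^2)$.

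For the Green identity, I would write $-\Delta = -4\dz\dzbar$ and invoke~\eqref{GreenH12} from Lemma~\ref{Trace_Extension} twice on each $\Omega_\pm$: first I peel $\dz$ off $\dzbar f_\pm$ in $\langle Tf,g\rangle$ using the complex-conjugate version of~\eqref{GreenH12} (producing $\bar{\bn}$-weighted boundary terms), and then I peel $\dzbar$ off $g_\pm$ in $\langle f, Tg\rangle$ using the stated $\bn$-weighted form. The two volume contributions $-4\langle\dzbar f_\pm,\dzbar g_\pm\rangle_{\Omega_\pm}$ cancel in the difference, and after expanding the boundary terms into ``jump$\,\times\,$mean'' pieces one checks that the prefactors $2$ and $\tfrac12$ in~\eqref{def_Gamma}, together with $|\bn|^2=1$, exactly reproduce $\langle \Gamma_1 f,\Gamma_0 g\rangle_{L^2(\Sigma;\mathbb{C}^2)} - \langle \Gamma_0 f,\Gamma_1 g\rangle_{L^2(\Sigma;\mathbb{C}^2)}$.

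The central step, which simultaneously yields surjectivity of $\Gamma_0$ and assertion~(i), is to exhibit a right inverse of $\Gamma_0$ inside $\Ker(T-w)$. Given $\varphi=(\varphi_1,\varphi_2)\in L^2(\Sigma;\mathbb{C}^2)$ and $w\in\mathbb{C}\setminus[0,+\infty)$, I take $u:=SL(w)\varphi_1+WL(w)\varphi_2$. By Lemma~\ref{Properties_SL} and Proposition~\ref{proposition_Psi}, $u\in\Dom(T)$ and $(T-w)u=0$. Reading off the four traces using Lemma~\ref{Properties_SL}~(iii) (no Dirichlet jump of $SL(w)\varphi_1$), the jump relations from Lemma~\ref{lemma_jump_relations} for both $WL(w)\varphi_2$ and $\widetilde{WL}(w)\varphi_1=\dzbar SL(w)\varphi_1$, and the identity $\dzbar WL(w)=-\tfrac{w}{4}SL(w)$ from Proposition~\ref{proposition_Psi}~(i), one obtains $\Gamma_0 u=\varphi$. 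By the direct-sum decomposition~\eqref{Decomposition_DomA} applied with $A_0=-\Delta$, this $u$ is precisely $\gamma(w)\varphi$, which proves~(i).

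Given~(i), the first representation of $\gamma(w)^\ast$ in~(ii) is obtained by taking the componentwise adjoint; the second follows from Proposition~\ref{Properties_Gamma_Weyl}~(ii) applied to $u:=(-\Delta-\bar w)^{-1}f\in H^2(\mathbb{R}^2)$, which has no jumps so that $\Gamma_1 u$ collapses to $(\gamma_D u,-\gamma_D\dzbar u)$; the claimed mapping property of $\gamma(w)^\ast$ is then immediate from the classical trace theorem applied to $u$ and $\dzbar u\in H^1(\mathbb{R}^2)$. Finally, $M(w)=\Gamma_1\gamma(w)$ is evaluated columnwise from the formula in~(i): for $\varphi_2=0$ the two-sided averages of $\gamma_D SL(w)\varphi_1$ and $\gamma_D\widetilde{WL}(w)\varphi_1$ give $S(w)\varphi_1$ and $\widetilde{W}(w)\varphi_1=-W(\bar w)^\ast\varphi_1$ by Lemma~\ref{lemma_W}; for $\varphi_1=0$ the average of $\gamma_D WL(w)\varphi_2$ is $W(w)\varphi_2$, whereas $\dzbar WL(w)\varphi_2=-\tfrac{w}{4}SL(w)\varphi_2$ contributes $\tfrac{w}{4}S(w)\varphi_2$. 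I expect the only real obstacle to be the bookkeeping in the Green identity; everything else is a direct application of the jump and averaging formulas gathered in Section~\ref{section_int.oper.}.
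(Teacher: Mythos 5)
Your proposal is correct and follows essentially the same route as the paper: apply Proposition~\ref{GBT_RateThm} with $A_0=-\Delta$ on $H^2(\R^2)$, verify Green's identity by a double application of~\eqref{GreenH12}, establish surjectivity of $\Gamma_0$ and assertion~(i) simultaneously via $u=SL(w)\varphi_1+WL(w)\varphi_2$ together with Lemmas~\ref{Properties_SL},~\ref{lemma_jump_relations} and Proposition~\ref{proposition_Psi}, and then derive~(ii) from Proposition~\ref{Properties_Gamma_Weyl}~(ii) and~(iii) by computing $\Gamma_1\gamma(w)$ columnwise using Lemma~\ref{lemma_W}. The only place where your write-up is thinner than the paper is the explicit bookkeeping in the Green identity, which you acknowledge; the paper carries this out in full but the plan you describe is exactly what it executes.
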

\begin{proof}
First, we show that $\{L^2(\Sigma; \mathbb{C}^2), \Gamma_0, \Gamma_1\}$ is a generalized boundary triple, for which we apply Proposition~\ref{GBT_RateThm}.
We begin by proving abstract Green's  identity. Let $f,g \in \Dom(T)$ be fixed. With the integration by parts  formula \eqref{GreenH12}, applied for $f_{\pm} \in H_{\dzbar}^{1/2}(\Omega_{\pm})$ and $\dzbar g_{\pm} \in   H_{\dz}^{1/2}(\Omega_{\pm})$, and $\Delta = 4 \dz \dzbar$ one finds that
	 \begin{equation*}
	 	\begin{aligned}
	 	\langle \partial_{\overline{z}} f_\pm, \partial_{\overline{z}} g_\pm \rangle_{\Omega_\pm} &= \pm\frac{1}{2} \langle \bn \gamma_D^{\pm} f_\pm, \gamma_D^{\pm} \partial_{\overline{z}} g_\pm \rangle_\Sigma  - \langle  f_\pm, \partial_z \partial_{\overline{z} }g_\pm \rangle_{\Omega_\pm}\\
	 	&= \pm\frac{1}{2} \langle \bn \gamma_D^{\pm} f_\pm, \gamma_D^{\pm} \partial_{\overline{z}} g_\pm \rangle_\Sigma  + \frac{1}{4} \langle  f_\pm, - \Delta g_\pm \rangle_{\Omega_\pm}.
	 	\end{aligned}
	 \end{equation*}
  Analogously, \eqref{GreenH12} applied for $\dzbar f_{\pm} \in H_{\dz}^{s}(\Omega_{\pm})$ and $ g_{\pm} \in   H_{\dzbar}^{s}(\Omega_{\pm})$  yields, after a complex conjugation,
		 \begin{equation*}
			\langle \partial_{\overline{z}} f_\pm, \partial_{\overline{z}} g_\pm \rangle_{\Omega_\pm} =  \pm \frac{1}{2} \langle \overline{\bn} \gamma_D^{\pm} \partial_{\overline{z}}f_\pm, \gamma_D^{\pm} g_\pm \rangle_\Sigma  + \frac{1}{4} \langle  - \Delta  f_\pm, g_\pm \rangle_{\Omega_\pm}.
	\end{equation*}
	Hence,
	\begin{equation*}
		 \langle  - \Delta  f_\pm, g_\pm \rangle_{\Omega_\pm} - \langle  f_\pm,- \Delta g_\pm \rangle_{\Omega_\pm} = \pm 2\Big( \langle \bn \gamma_D^{\pm} f_\pm, \gamma_D^{\pm} \partial_{\overline{z}}  g_\pm  \rangle_\Sigma -  \langle \overline{\bn}  \gamma_D^{\pm} \partial_{\overline{z}} f_\pm , \gamma_D^{\pm} g_\pm \rangle_\Sigma \Big).
	\end{equation*}
	By adding this equation for $\Omega_+$ and $\Omega_-$, one gets that
	\begin{equation*}
	  \begin{split}
		 \langle  T  f, g \rangle_{\mathbb{R}^2} - \langle  f, T g \rangle_{\mathbb{R}^2} &= 2\Big( \langle \bn \gamma_D^{+} f_+, \gamma_D^{+} \partial_{\overline{z}}  g_+  \rangle_\Sigma -  \langle \overline{\bn}  \gamma_D^{+} \partial_{\overline{z}} f_+ , \gamma_D^{+} g_+ \rangle_\Sigma \Big) \\
		 &\quad - 2\Big( \langle \bn \gamma_D^{-} f_-, \gamma_D^{-} \partial_{\overline{z}}  g_-  \rangle_\Sigma -  \langle \overline{\bn}  \gamma_D^{-} \partial_{\overline{z}} f_- , \gamma_D^{-} g_- \rangle_\Sigma \Big).
      \end{split}
	\end{equation*}
	On the other hand, a direct calculation involving the definition of $\Gamma_0$ and $\Gamma_1$ shows that 
	\begin{equation*}
	  \begin{split}
  	  \langle  \Gamma_1 f, \Gamma_0 g \rangle_{\Sigma} - \langle  \Gamma_0 f, \Gamma_1 g \rangle_{\Sigma} &= 2\Big( \langle \bn \gamma_D^{+} f_+, \gamma_D^{+} \partial_{\overline{z}}  g_+  \rangle_\Sigma -  \langle \overline{\bn}  \gamma_D^{+} \partial_{\overline{z}} f_+ , \gamma_D^{+} g_+ \rangle_\Sigma \Big) \\
		 &\quad - 2\Big( \langle \bn \gamma_D^{-} f_-, \gamma_D^{-} \partial_{\overline{z}}  g_-  \rangle_\Sigma -  \langle \overline{\bn}  \gamma_D^{-} \partial_{\overline{z}} f_- , \gamma_D^{-} g_- \rangle_\Sigma \Big).
      \end{split}
	\end{equation*}
	Therefore, the last two displayed formulas imply that the abstract Green's identity is fulfilled.

  Next, as $C^{\infty}_0(\Omega_{+}) \oplus C^{\infty}_0(\Omega_{-}) \subseteq \Ker(\Gamma_0) \cap \Ker(\Gamma_1)$, the set  $\Ker(\Gamma_0) \cap \Ker(\Gamma_1)$ is dense in $L^2(\mathbb{R}^2)$. Moreover,  $-\Delta \subseteq T \upharpoonright \Ker (\Gamma_0)$.

  In the next step we show $\text{Ran}(\Gamma_0) = L^2(\Sigma; \mathbb{C}^2)$. Let $w \in \mathbb{C} \setminus [0, +\infty)$ and $\varphi = (\varphi_1, \varphi_2) \in L^2(\Sigma; \mathbb{C}^2)$ be fixed. Define 
  \begin{equation} \label{def_f_w}
    f_w := SL(w) \varphi_1 + WL(w) \varphi_2.
  \end{equation}
  Then, taking the mapping properties of $SL(w)$ and $WL(w)$ from Lemma~\ref{Properties_SL} (i) and Proposition~\ref{proposition_Psi}~(i) into account, one finds that $f_w \in \Dom(T)$. Moreover, using $\partial_{\overline{z}} SL(w) = \widetilde{WL}(w)$ with $\widetilde{WL}(w)$ defined by~\eqref{def_Xi}, cf. Proposition~\ref{proposition_Psi}~(ii), Lemma~\ref{Properties_SL} (ii),~Lemma~\ref{lemma_jump_relations}, and $\dzbar WL(w) \varphi_2 \in H^1(\mathbb{R}^2)$, see Proposition~\ref{proposition_Psi}~(i), we find that 
  \begin{equation*}
    2\overline{\bn} ( \gamma_D^{+} \dzbar f_{w,+} - \gamma_D^{-} \dzbar f_{w,-} ) = 2\overline{\bn} ( \gamma_D^{+} (\widetilde{WL}(w) \varphi_1)_+ - \gamma_D^{-} (\widetilde{WL}(w) \varphi_1)_- ) = \varphi_1.
  \end{equation*}
  In a similar way, as $SL(w) \varphi_1 \in H^1(\mathbb{R}^2)$, one concludes with Lemma~\ref{lemma_jump_relations} that
  \begin{equation*}
    2 \bn (\gamma_D^{+} f_{w,+} - \gamma_D^{-} f_{w,-} ) = 2\bn (\gamma_D^{+} (WL(w) \varphi_2)_+ - \gamma_D^{-} (WL(w) \varphi_2)_- ) = \varphi_2.
  \end{equation*}
  By combining the latter two displayed formulas, we find that
  \begin{equation} \label{range_Gamma_0}
    \Gamma_0 f_w = \varphi,
  \end{equation}
  which shows the claimed surjectivity of $\Gamma_0$. Thus, all assumptions in Proposition~\ref{GBT_RateThm} are fulfilled. Therefore, $\{L^2(\Sigma; \mathbb{C}^2), \Gamma_0, \Gamma_1\}$ is a generalized boundary triple for $\overline{T}$, and $-\Delta = T \upharpoonright \Ker (\Gamma_0)$.
  
  To show the claim in~(i), let $\varphi = (\varphi_1, \varphi_2) \in L^2(\Sigma; \mathbb{C}^2)$, $w \in \mathbb{C} \setminus [0, +\infty)$, and $f_w \in \Dom(T)$ be defined by~\eqref{def_f_w}. Then, by Lemma~\ref{Properties_SL}~(ii) and Proposition~\ref{proposition_Psi}~(iii) we have that $(-\Delta - w) f_{w, \pm} = 0$ in $\mathbb{R}^2 \setminus \Sigma$ and thus, $f_w \in \Ker(T - w)$. Furthermore, by~\eqref{range_Gamma_0} also $\Gamma_0 f_w = \varphi$ holds. Therefore, the definition of $\gamma(w)$ in Definition~\ref{Definition_Gamma_Weyl} implies that
  \begin{equation*}
    \gamma(w) \varphi = f_w = SL(w) \varphi_1 + WL(w) \varphi_2.
  \end{equation*}
  The claim in~(ii) is a direct consequence of Proposition \ref{Properties_Gamma_Weyl} (ii), $T \upharpoonright \Ker(\Gamma_0) = -\Delta$, and the definition of $\Gamma_1$.
  
  It remains to show the claimed formula for the Weyl function $M$ in~(iii). Let $\varphi = (\varphi_1, \varphi_2) \in L^2(\Sigma; \mathbb{C}^2)$ be fixed. By using the definition of $M(w)$, the formula for $\gamma(w)$ from~(i), and $SL(w) \varphi_1, \partial_{\overline{z}} WL(w) \varphi_2 \in H^1(\mathbb{R}^2)$ we obtain  that
  \begin{equation} \label{equation_Weyl_function}
    \begin{split}
      M(w) \varphi &= \Gamma_1 \gamma(w) \varphi \\
      &= \begin{pmatrix}
	\gamma_D SL(w) \varphi_1 + \frac{1}{2} \big( \gamma_D^{+} (WL(w) \varphi_2)_{+} +  \gamma_D^{-} (WL(w) \varphi_2 )_{-} \big) \\
	- \frac{1}{2} \big( \gamma_D^{+} \dzbar (SL(w) \varphi_1)_{+} +  \gamma_D^{-} \dzbar (SL(w) \varphi_1 )_{-} \big) -\gamma_D \partial_{\overline{z}} WL(w) \varphi_2
\end{pmatrix} \\
     &= \begin{pmatrix}
	S(w) \varphi_1 + \frac{1}{2} \big( \gamma_D^{+} (WL(w) \varphi_2)_{+} +  \gamma_D^{-} (WL(w) \varphi_2 )_{-} \big) \\
	- \frac{1}{2} \big( \gamma_D^{+} \dzbar (SL(w) \varphi_1)_{+} +  \gamma_D^{-} \dzbar (SL(w) \varphi_1 )_{-} \big) + \frac{w}{4} S(w) \varphi_2
\end{pmatrix},
    \end{split}
  \end{equation}
  where~\eqref{def_single_layer_boundary_integral_operator} and Proposition~\ref{proposition_Psi}~(i) were used in the last step. Moreover, by~\eqref{def_L} we have
  \begin{equation*}
    \frac{1}{2} \big( \gamma_D^{+} (WL(w) \varphi_2)_{+} +  \gamma_D^{-} (WL(w) \varphi_2 )_{-} = W(w) \varphi_2.
  \end{equation*}
  Eventually, employing Proposition~\ref{proposition_Psi}~(ii),~\eqref{def_tilde_L}, and Lemma~\ref{lemma_W}, we obtain that
  \begin{equation*}
    \begin{split}
      - \frac{1}{2} \big( \gamma_D^{+} \dzbar &(SL(w) \varphi_1)_{+} +  \gamma_D^{-} \dzbar (SL(w) \varphi_1 )_{-} \big) \\
      &= - \frac{1}{2} \big( \gamma_D^{+} (\widetilde{WL}(w) \varphi_1)_{+} +  \gamma_D^{-} (\widetilde{WL}(w) \varphi_1 )_{-} \big) 
      = -\widetilde{W}(w) \varphi_1 = W(\overline{w})^* \varphi_1.
    \end{split}
  \end{equation*}
  By using the last two displayed formulas in~\eqref{equation_Weyl_function}, we find that the claimed expression for $M(w)$ is true. This finishes the proof of the proposition.
\end{proof}

\begin{remark} \label{Better_Mapping_Properties}
If $\Sigma$ is a $C^{k}$-boundary with $k \geq 2$, the mapping properties of  $\gamma(w)$ and $M(w)$ improve. First, with Lemma \ref{Properties_SL} (i)\&(ii) and Proposition~\ref{proposition_Psi}~(i) one gets for any $s,t \in [0,k-1]$ that
\begin{equation*}
\gamma(w) : H^{s-1/2}(\Sigma) \times H^{t-1/2}(\Sigma) \to H^{\min\{s+1, t\}}_\Delta(\R^2 \setminus \Sigma)
\end{equation*}
is a well-defined and bounded operator.
Furthermore, as by Proposition~\ref{proposition_Psi} we have $\dzbar \gamma(w) \varphi = \widetilde{WL}(w) \varphi_1 - \frac{w}{4} SL(w) \varphi_2$, we get with Lemma \ref{Properties_SL} (i)\&(ii) and~Proposition~\ref{proposition_Psi}~(ii) for all   $s,t \in [0, k-1]$ that
\begin{equation*}
 \dzbar \gamma(w) : H^{s-1/2}(\Sigma) \times H^{t-1/2}(\Sigma) \to H^{\min\{s, t +1\}}_\Delta(\R^2 \setminus \Sigma)
\end{equation*}
is well-defined and continuous.
Similarly, it follows with~\eqref{S_C_k},~\eqref{W_C_k},~\eqref{W_tilde_C_k},  and $\widetilde{W}(w) = -W(\overline{w})^*$ due to Lemma \ref{lemma_W} for all $s,t \in [\frac{1}{2},k-1]$ that
\begin{equation*}
	M(w) : H^{s-1/2}(\Sigma) \times H^{t-1/2}(\Sigma) \to H^{\min\{s+1/2,t-1/2\}}(\Sigma) \times H^{\min\{s-1/2,t+1/2\}}(\Sigma)
\end{equation*}
is a well-defined and bounded operator.
\end{remark}

In the following lemma, we state a useful result about the range of $\Gamma_1$.

\begin{lemma} \label{DensityOfGamma1H2inL2}
  The set $\Gamma_1(H^2(\R^2))$ is dense in $L^2(\Sigma;\C^2)$.
\end{lemma}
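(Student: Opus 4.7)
The plan is to exploit the generalized boundary triple structure established in Proposition~\ref{IsQBT}. The crucial observation is the identification $\Ker(\Gamma_0)\cap \Dom(T)=H^2(\R^2)$: indeed, Proposition~\ref{IsQBT} asserts $T \upharpoonright \Ker(\Gamma_0) = -\Delta$ with $\Dom(-\Delta)=H^2(\R^2)$, so an element $f\in\Dom(T)$ satisfies $\Gamma_0 f=0$ iff $f\in H^2(\R^2)$. (This is also transparent from the very definition of $\Gamma_0$: for $f\in H^2(\R^2)$ both $f$ and $\dzbar f\in H^1(\R^2)$ have no jump across $\Sigma$, and the converse follows from elliptic regularity encoded in the identification with $-\Delta$.) Hence it suffices to show $\Gamma_1(\Ker(\Gamma_0))$ is dense in $L^2(\Sigma;\C^2)$.

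Next, I would invoke the auxiliary density statement produced inside the proof of Proposition~\ref{GBT_RateThm}: because $\Ran(\Gamma_0)=L^2(\Sigma;\C^2)$, the joint range $\Ran(\Gamma_0,\Gamma_1)$ is dense in $L^2(\Sigma;\C^2)\times L^2(\Sigma;\C^2)$. Given an arbitrary target $\psi\in L^2(\Sigma;\C^2)$, I pick a sequence $(f_n)\subset\Dom(T)$ with $\Gamma_0 f_n\to 0$ and $\Gamma_1 f_n\to \psi$.

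The proof is then completed by a one-line correction through the $\gamma$-field associated with the triple. Fix any $w\in\C\setminus[0,+\infty)$ and define
\begin{equation*}
g_n := f_n - \gamma(w)\Gamma_0 f_n \in \Dom(T).
\end{equation*}
By construction $\Gamma_0\gamma(w)=I_{L^2(\Sigma;\C^2)}$, so $\Gamma_0 g_n = 0$, i.e.\ $g_n\in H^2(\R^2)$. Moreover, using $\Gamma_1\gamma(w)=M(w)$,
\begin{equation*}
\Gamma_1 g_n = \Gamma_1 f_n - M(w)\Gamma_0 f_n,
\end{equation*}
and since $M(w)$ is bounded on $L^2(\Sigma;\C^2)$ by Proposition~\ref{Properties_Gamma_Weyl}~(iii), the term $M(w)\Gamma_0 f_n$ tends to zero. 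Therefore $\Gamma_1 g_n \to \psi$, which exhibits $\psi$ as a limit of elements of $\Gamma_1(H^2(\R^2))$ and proves the density.

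I do not anticipate a real obstacle; the only item worth a sanity check is that the density of $\Ran(\Gamma_0,\Gamma_1)$ in $L^2(\Sigma;\C^2)\times L^2(\Sigma;\C^2)$ is indeed available for this triple, but this is precisely what is extracted in the proof of Proposition~\ref{GBT_RateThm} from the surjectivity of $\Gamma_0$.
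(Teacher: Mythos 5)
Your proof is correct, but it takes a genuinely different route from the paper's. You proceed ``constructively'': you recall that $\Ker(\Gamma_0)=\Dom(-\Delta)=H^2(\R^2)$, invoke the density of the joint range $\Ran(\Gamma_0,\Gamma_1)$ in $L^2(\Sigma;\C^2)\times L^2(\Sigma;\C^2)$ (established in the proof of Proposition~\ref{GBT_RateThm} via \cite[Lem.~2.3]{B23}), and then project any approximating sequence onto its $\Ker(\Gamma_0)$-component through the decomposition $\Dom(T)=H^2(\R^2)\dotplus\Ker(T-w)$, which is exactly what $g_n := f_n-\gamma(w)\Gamma_0 f_n$ achieves; the boundedness of $M(w)$ closes the argument. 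The paper instead argues by duality: given $\varphi\in L^2(\Sigma;\C^2)$ orthogonal to $\Gamma_1(H^2(\R^2))$, it forms $f:=\gamma(-1)\varphi$ and applies the abstract Green's identity to show that $f$ is a distributional solution of $(-\Delta+1)f=0$ tested against all of $H^2(\R^2)$, hence $f\in H^2(\R^2)$ and $f=0$, so $\varphi=\Gamma_0 f=0$. The paper's argument is shorter and self-contained at this point, using only the triple's Green's identity and the self-adjointness of $-\Delta$; yours is more structural and recycles the joint-range density lemma, which makes the mechanism transparent (decompose and discard the $\Ker(T-w)$-part) but outsources the key density fact to the external reference \cite{B23} that the paper deliberately quarantines inside the proof of Proposition~\ref{GBT_RateThm}. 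Both are valid.
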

\begin{proof} 
Let $\varphi \in  L^2(\Sigma; \mathbb{C}^2)$ with  $\langle \varphi , \Gamma_1 g \rangle_{\Sigma} = 0$ for all $g \in H^2(\mathbb{R}^2)$. Set $f := \gamma(-1) \varphi \in \text{Ker}(T+1)$, which implies $\Gamma_0 f = \varphi$. Due to the abstract Green's identity we find that
\begin{equation*}
\begin{split}
  0 = \langle \varphi , \Gamma_1 g \rangle_{\Sigma} &= \langle \Gamma_0 f , \Gamma_1 g \rangle_{\Sigma} - \langle \Gamma_1 f , \Gamma_0 g \rangle_{\Sigma} \\
  &= \langle f , T g \rangle_{\mathbb{R}^2} - \langle T f , g \rangle_{\mathbb{R}^2} = \langle f , - \Delta g \rangle_{\mathbb{R}^2} + \langle f , g \rangle_{\mathbb{R}^2} \quad \forall g \in H^2(\mathbb{R}^2),
\end{split}
\end{equation*}
where $\Gamma_0 g = 0$ was used. Because of the self-adjointness of the free Laplacian $-\Delta$, this yields $f \in H^2(\mathbb{R}^2)$ and $(-\Delta + 1) f = 0$, i.e. $f \in \Ker(- \Delta +1) = \{0\}$. This shows $\varphi = \Gamma_0 f = 0$ and hence,  the set $\{ \Gamma_1 g \: | \: g \in H^2(\mathbb{R}^2) \}$ is dense in $L^2(\Sigma; \mathbb{C}^2)$.
\end{proof}

\subsection{Definition and Self-adjointness of $T_B$}\label{section_def_selfadjointness}

For a compact and self-adjoint operator $B: L^2(\Sigma; \mathbb{C}^2) \rightarrow L^2(\Sigma; \mathbb{C}^2)$
one may rewrite the definition \eqref{eq:def_TB_alt}, using the boundary mappings~\eqref{def_Gamma}, as
\begin{equation} \label{eq:def_TB}
  \begin{split}
    T_B f &= (-\Delta f_+) \oplus (-\Delta f_-), \\
    \Dom(T_B) &= \big\{ f \in H^{1/2}_\Delta(\mathbb{R}^2 \setminus \Sigma) \,| \, \partial_{\overline{z}} f_\pm \in H^{1/2}(\Omega_\pm), \Gamma_0 f + B \Gamma_1 f = 0 \big\},
  \end{split}
\end{equation}
where $H^{1/2}_\Delta(\mathbb{R}^2 \setminus \Sigma)$ is defined by~\eqref{def_H_s_Delta_Sigma}. In this section, we will show that the distributional action of \eqref{eq:compact_formal} on $f\in\Dom(T)$ belongs to $L^2(\R^2)$ if and only if one has $f\in\Dom(T_B)$, i.e. the transmission conditions \eqref{eq:TC_sub}, which may be also written as $\Gamma_0 f+B\Gamma_1f=0$, are satisfied. This will justify our definition of $T_B$. Then, with the help of the generalized boundary triple $\{ L^2(\Sigma; \mathbb{C}^2), \Gamma_0, \Gamma_1 \}$, see Proposition~\ref{IsQBT}, we will show that $T_B$ is self-adjoint.

\begin{proposition} \label{PropFormalDiffExpression}
	Let $T$ be defined by~\eqref{def_T} and $\{L^2(\Sigma; \mathbb{C}^2), \Gamma_0, \Gamma_1 \}$ be the generalized boundary triple given by~\eqref{def_Gamma}. Moreover, let $B : L^2(\Sigma;\C^2) \to L^2(\Sigma;\C^2)$ be compact and self-adjoint, $b_1,b_2,\dots \in \mathbb{R}$  be its eigenvalues, and $\varphi_1, \varphi_2, \dots \in L^2(\Sigma; \mathbb{C}^2)$ be the corresponding orthonormal eigenfunctions. Finally, denote  for $j \in \{1,2\}$ and $n \in \N$ by $\varphi_{n}^j \in L^2(\Sigma)$ the $j$-th component of $\varphi_n$. Then, for $f \in \Dom(T)$ the expression $\widetilde{f}$ defined by
\begin{equation}\label{FormalDiffExpressionB}
\begin{split}
\widetilde{f} = - \Delta f 
 + \sum_{n=1}^{\infty} b_n \Big( &| \varphi_{n}^1 \delta_{\Sigma} \rangle \langle \varphi_{n}^1 \delta_{\Sigma} | +   | \varphi_{n}^1 \delta_{\Sigma} \rangle \langle \dz (\varphi_{n}^2 \delta_{\Sigma}) | \\
 &+  | \dz (\varphi_{n}^2 \delta_{\Sigma}) \rangle \langle \varphi_{n}^1 \delta_{\Sigma} | + | \dz (\varphi_{n}^2 \delta_{\Sigma}) \rangle  \langle \dz (\varphi_{n}^2 \delta_{\Sigma}) | \Big) f
\end{split}
\end{equation}
is a well-defined distribution and $\widetilde{f} \in L^2(\R^2)$ if and only if 
		\begin{equation*}
			\Gamma_0 f + B\Gamma_1f = 0 \qquad \text{in }L^2(\Sigma;\C^2).
	\end{equation*}
\end{proposition}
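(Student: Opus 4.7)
The strategy is: (i) reorganize the series in \eqref{FormalDiffExpressionB} as $h_1\delta_\Sigma + \dz(h_2\delta_\Sigma)$ with $(h_1,h_2)^T = B\Gamma_1 f$; (ii) compute the distributional Laplacian of $f$ on all of $\R^2$ and observe that its singular part has exactly the same structure, with coefficients given by the two components of $\Gamma_0 f$; (iii) deduce the claim from a linear independence argument for $\delta_\Sigma$ and $\dz\delta_\Sigma$.

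\textbf{Steps.} For (i), the extended pairings of Section~\ref{section_distributions} together with the explicit formula for $\Gamma_1$ in~\eqref{def_Gamma} yield for each $n \in \N$ that the four bra-kets block in~\eqref{FormalDiffExpressionB}, when applied to $f \in \Dom(T)$, evaluates to $\langle \varphi_n,\Gamma_1 f\rangle_{L^2(\Sigma;\C^2)}\bigl(\varphi_n^1\delta_\Sigma + \dz(\varphi_n^2\delta_\Sigma)\bigr)$. Since $B\Gamma_1 f = \sum_n b_n\langle\varphi_n,\Gamma_1 f\rangle_{L^2(\Sigma;\C^2)} \varphi_n$ converges in $L^2(\Sigma;\C^2)$ and the maps $g\mapsto g\delta_\Sigma$, $g\mapsto\dz(g\delta_\Sigma)$ are continuous from $L^2(\Sigma)$ into $\mathcal{D}'(\R^2)$, setting $(h_1,h_2)^T := B\Gamma_1 f$ gives convergence of the whole series in $\mathcal{D}'(\R^2)$ to $h_1\delta_\Sigma + \dz(h_2\delta_\Sigma)$. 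For (ii), rewriting $-\Delta = -4\dz\dzbar$ and applying Green's identity from Lemma~\ref{Trace_Extension} twice against a test function (first using $\dzbar f_\pm \in H^{1/2}_{\dz}(\Omega_\pm)$ to move the outer $\dz$, then $f_\pm \in H^{1/2}_{\dzbar}(\Omega_\pm)$ to move the remaining $\dzbar$) leads, after identifying the boundary contributions via the pairings of Section~\ref{section_distributions}, to
\begin{equation*}
-\Delta f = (-\Delta f_+)\oplus(-\Delta f_-) + (\Gamma_0 f)_1\,\delta_\Sigma + \dz\bigl((\Gamma_0 f)_2\,\delta_\Sigma\bigr) \quad \text{in } \mathcal{D}'(\R^2),
\end{equation*}
with the first summand in $L^2(\R^2)$ since $f \in \Dom(T)$. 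Adding this to the conclusion of (i),
\begin{equation*}
\widetilde{f} = (-\Delta f_+)\oplus(-\Delta f_-) + \bigl(\Gamma_0 f + B\Gamma_1 f\bigr)_1\,\delta_\Sigma + \dz\Bigl(\bigl(\Gamma_0 f + B\Gamma_1 f\bigr)_2\,\delta_\Sigma\Bigr).
\end{equation*}
For (iii), any $L^2(\R^2)$-function supported on the Lebesgue-null set $\Sigma$ vanishes, so the condition $\widetilde f \in L^2(\R^2)$ reduces to the distributional identity $a\delta_\Sigma + \dz(b\delta_\Sigma)=0$ with $(a,b)^T := \Gamma_0 f + B\Gamma_1 f \in L^2(\Sigma;\C^2)$. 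Using the decomposition $\dz = \tfrac12 \overline{\bn}(\partial_n - i\partial_t)$ on $\Sigma$ and testing first against $\varphi \in C_0^\infty(\R^2)$ with $\varphi\upharpoonright\Sigma = 0$ and arbitrary $\partial_n\varphi\upharpoonright\Sigma$ forces $b \equiv 0$; the remaining equation $a\delta_\Sigma=0$ then forces $a \equiv 0$.

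\textbf{Main obstacle.} The delicate technical point is the double integration by parts in step~(ii): the two Green's identities must be organised in the unique order compatible with the regularity contained in $\Dom(T)$, namely $f_\pm \in H^{1/2}_{\dzbar}(\Omega_\pm)$ and $\dzbar f_\pm \in H^{1/2}_{\dz}(\Omega_\pm)$, and one has to keep careful track of the placement of $\bn$ versus $\overline{\bn}$ and the opposite signs of the boundary contributions from $\Omega_+$ and $\Omega_-$, so that the resulting coefficients match exactly the two components of $\Gamma_0 f$ in~\eqref{def_Gamma}. A secondary nuisance in step~(iii) is that $\Sigma$ is only Lipschitz, so the test functions with independently prescribed Dirichlet and normal traces must be produced through local bi-Lipschitz charts provided by~\eqref{eq_bi_Lipschitz} combined with smooth cut-offs in the transverse direction.
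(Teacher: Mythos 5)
Your argument is correct in substance but is organized genuinely differently from the paper's. The paper never explicitly computes the distributional singular part of $-\Delta f$; instead it establishes the identity $(\widetilde f,g)=\langle -\Delta\overline g,f\rangle_{\R^2}+\langle\Gamma_1\overline g,B\Gamma_1 f\rangle_\Sigma$ for test functions $g$, and then uses the abstract Green's identity of the boundary triple (which hides all the $\dz$/$\dzbar$ integration by parts inside the proof of Proposition~\ref{IsQBT}) to reduce the claim to the statement that $\langle\Gamma_1\overline g,\Gamma_0 f+B\Gamma_1 f\rangle_\Sigma=0$ for all $g$, which is handled by the density Lemma~\ref{DensityOfGamma1H2inL2}. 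You instead compute the singular part of $-\Delta f$ explicitly as $(\Gamma_0 f)_1\delta_\Sigma+\dz\big((\Gamma_0 f)_2\delta_\Sigma\big)$, recognize the series as $(B\Gamma_1 f)_1\delta_\Sigma+\dz\big((B\Gamma_1 f)_2\delta_\Sigma\big)$, and reduce everything to a linear independence fact about $\delta_\Sigma$ and $\dz\delta_\Sigma$. Both routes work; yours is more explicit and gives the cleaner closed formula for $\widetilde f$, while the paper's is shorter because it reuses the boundary triple machinery it already built.

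The one place where your version is noticeably rougher is the linear independence step~(iii). You propose to produce, by hand, test functions with $\varphi|_\Sigma=0$ and arbitrary $\partial_n\varphi|_\Sigma$; for merely Lipschitz $\Sigma$ this is delicate (the naive $\varphi=(x_2-h(x_1))\psi(x)$ in local charts is only Lipschitz, not smooth, and one has to smooth carefully). Note, however, that what you need is exactly the statement that $\langle\Gamma_1\overline\varphi,(a,b)^T\rangle_\Sigma=0$ for all $\varphi\in\mathcal D(\R^2)$ forces $(a,b)=0$: one checks that $(a\delta_\Sigma+\dz(b\delta_\Sigma),\varphi)=\langle\Gamma_1\overline\varphi,(a,b)^T\rangle_\Sigma$, so step~(iii) is precisely the density of $\Gamma_1(\mathcal D(\R^2))$ in $L^2(\Sigma;\C^2)$. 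This is Lemma~\ref{DensityOfGamma1H2inL2} (combined with density of $\mathcal D(\R^2)$ in $H^2(\R^2)$ and continuity of $\Gamma_1$ there), and that lemma is proved by a short abstract argument using $\gamma(-1)$ and self-adjointness of $-\Delta$, with no construction of test functions on $\Sigma$. Citing it would remove the only technically fragile point of your proof without changing its structure.
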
 
\begin{proof}
	Let $f \in \Dom(T)$ be fixed and $\widetilde{f}$ be defined by~\eqref{FormalDiffExpressionB}. First, we verify that $\widetilde{f} \in \mathcal{D}'(\mathbb{R}^2)$. Clearly, $-\Delta f \in \mathcal{D}'(\mathbb{R}^2)$ and also every summand of the series belongs to $\mathcal{D}'(\mathbb{R}^2)$. Hence, it suffices to show that the series converges in $\mathcal{D}'(\mathbb{R}^2)$. For this, let  $g \in \mathcal{D}(\R^2)$ and $m \in \N$ be fixed. To shorten notation we write $f|_\Sigma$ and $\partial_{\overline{z}}f|_\Sigma$ instead of $\tfrac{1}{2}( \gamma_D^{+} f_{+} + \gamma_D^{-}f_{-})$ and $\tfrac{1}{2}(\gamma_D^{+} \partial_{\overline{z}} f_{+} + \gamma_D^{-} \partial_{\overline{z}} f_{-})$. Then, using the definitions from Section \ref{section_distributions} and the compactness of $B$ we get
		\begin{equation*}
			\begin{aligned}
				& \bigg(\sum_{n=1}^{m} b_n \Big( | \varphi_{n}^1 \delta_{\Sigma} \rangle \langle \varphi_{n}^1 \delta_{\Sigma} | +   | \varphi_{n}^1 \delta_{\Sigma} \rangle \langle \dz (\varphi_{n}^2 \delta_{\Sigma})\ | \\
				&\hspace{100 pt}+  | \dz (\varphi_{n}^2 \delta_{\Sigma}) \rangle \langle \varphi_{n}^1 \delta_{\Sigma} | + | \dz (\varphi_{n}^2 \delta_{\Sigma}) \rangle  \langle \dz (\varphi_{n}^2 \delta_{\Sigma}) | \Big) f,g\bigg)\\
				&=\sum_{n=1}^m b_n \big( (\varphi_{n}^1 \delta_\Sigma, g)  +  ( \partial_{z}(\varphi_{n}^2 \delta_\Sigma),g)  \big)\big( (\overline{\varphi_{n}^1 } \delta_\Sigma ,f ) + ( \partial_{\overline{z}}(\overline{ \varphi_{n}^2} \delta_\Sigma) , f)\big) \\
				&=\sum_{n=1}^m b_n \big( \langle  \overline{g}|_\Sigma, \varphi_{n}^1 \rangle_{\Sigma}  - \langle \partial_{\overline{z}}\overline{g}|_\Sigma, \varphi_{n}^2 \rangle_{\Sigma}  \big)\big( \langle\varphi_{n}^1, f|_\Sigma \rangle_{\Sigma} - \langle  \varphi_{n}^2 , \partial_{\overline{z}} f|_\Sigma \rangle_{\Sigma}\big) \\
				& = \sum_{n=1}^m b_n \langle  (\overline{g}|_\Sigma,- \dzbar \overline{g}|_\Sigma )^T, (\varphi_{n}^1,\varphi_{n}^2)^T\rangle_\Sigma \langle(\varphi_{n}^1,\varphi_{n}^2)^T,(f|_\Sigma,- \dzbar f|_\Sigma)^T\rangle_{\Sigma}\\
				& = \sum_{n=1}^m b_n \langle  (\overline{g}|_\Sigma,- \dzbar \overline{g}|_\Sigma )^T ,\varphi_n\rangle_\Sigma \langle\varphi_n,( f|_\Sigma,- \dzbar f|_\Sigma)^T\rangle_{\Sigma}\\
				& \hspace{30 pt}  \overset{m \to \infty}{\longrightarrow} \langle  (\overline{g}|_\Sigma,- \dzbar \overline{g}|_\Sigma)^T, B( f|_\Sigma,-\partial_{\overline{z}} f|_\Sigma)^T \rangle_\Sigma = \langle \Gamma_1 \overline{g}, B \Gamma_1 f \rangle_\Sigma.
			\end{aligned}
	\end{equation*}
	Hence, $\widetilde{f} \in \mathcal{D}'(\R^2)$ and 
	\begin{equation}\label{eq_formal_1}
		( \widetilde{f},g) = \langle -\Delta \overline{g}, f \rangle_{\R^2}+ \langle \Gamma_1 \overline{g}, B \Gamma_1 f \rangle_\Sigma .
	\end{equation}
	If $ \Gamma_0 f + B\Gamma_1 f=0$, then we have by the abstract Green's identity and $\Gamma_0 g = 0$ 
	\begin{equation*}
		( \widetilde{f},g) = \langle- \Delta \overline{g},f \rangle_{\R^2}- \langle \Gamma_1 \overline{g} ,\Gamma_0 f \rangle_\Sigma = \langle \overline{g}, Tf\rangle_{\R^2}= (T f,g).
	\end{equation*}
	This holds for all $g \in \mathcal{D}(\R^2)$ and therefore  $\widetilde{f} = Tf \in L^2(\R^2)$. Conversely, if $\widetilde{f} \in L^2(\R^2)$, then it immediately follows that $\widetilde{f}_\pm = -\Delta f_\pm $ and hence $Tf = \widetilde{f}$. Thus, the abstract Green's identity yields
	\begin{equation*}
		(\widetilde{f},g) = \langle \overline{g}, Tf\rangle_{\R^2} = \langle -\Delta \overline{g}, f\rangle_{\R^2} - \langle \Gamma_1 \overline{g}, \Gamma_0 f \rangle_\Sigma \quad \forall g \in \mathcal{D}(\R^2).
	\end{equation*}
	Comparing this with \eqref{eq_formal_1} gives us
	\begin{equation*}
		\langle \Gamma_1 \overline{g}, \Gamma_0 f + B\Gamma_1 f \rangle_\Sigma = 0 \qquad \forall g \in \mathcal{D}(\R^2).
	\end{equation*}
	By continuity and density this equation remains true for $g \in H^2(\R^2)$. Hence, Lemma~\ref{DensityOfGamma1H2inL2} implies $\Gamma_0f + B \Gamma_1 f =0$.
\end{proof}

In the following theorem we discuss the self-adjointness of $T_B$ and some of its basic properties. Moreover, we state a Birman-Schwinger principle to characterize eigenvalues of $T_B$ in~(i) and a variant of Krein's resolvent formula for $T_B$ in item~(ii). Recall that $\gamma$ and $M$ denote the $\gamma$-field and the Weyl function associated with the generalized boundary triple $\{ L^2(\Sigma; \mathbb{C}^2), \Gamma_0, \Gamma_1 \}$; cf. Proposition~\ref{IsQBT}. Then the following result follows immediately from Theorem~\ref{GBT_TB_SA}.

\begin{theorem} \label{TBSelfadjoint}
Let $B$ be a compact and self-adjoint operator in $L^2(\Sigma; \mathbb{C}^2)$ and assume that $B = B_1 B_2$ for two bounded operators $B_1 : \mathcal{K} \to L^2(\Sigma; \C^2)$ and $B_2:L^2(\Sigma; \C^2) \to \mathcal{K}$ and a suitable Hilbert space $\mathcal{K}$. Then, the operator  $T_B$ defined by~\eqref{eq:def_TB} is self-adjoint in $L^2(\mathbb{R}^2)$ and the following statements hold:

\begin{enumerate}
\item[\textup{(i)}] For $w \in \mathbb{C} \setminus [0, +\infty)$ there holds
\begin{equation*}
\Ker(T_B - w) = \gamma(w) B_1 \Ker(I + B_2 M(w) B_1 ).
\end{equation*}
In particular, $w \in \sigma_{\textup{p}}(T_B)$ if and only if $0 \in \sigma_{\textup{p}}(I + B_2 M(w) B_1)$.
\item[\textup{(ii)}] For any $w \in \rho(T_B) \cap ( \mathbb{C} \setminus [0, +\infty) )$ the map $I + B_2 M(w) B_1$ is continuously invertible in $\mathcal{K}$ and there holds 
\begin{equation*}
	( T_B - w )^{-1} = ( - \Delta - w )^{-1} - \gamma(w) B_1 ( I + B_2 M(w) B_1 )^{-1} B_2 \gamma(\overline{w})^{\ast}.
\end{equation*}
\end{enumerate}
\end{theorem}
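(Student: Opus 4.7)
The plan is to recognize this theorem as a direct specialization of the abstract Theorem~\ref{GBT_TB_SA} to the concrete generalized boundary triple built in Proposition~\ref{IsQBT}, and to merely check that the three pieces of the abstract setup are in place. First, the reference self-adjoint operator in the abstract framework is $A_0 := T \upharpoonright \Ker(\Gamma_0)$, which by Proposition~\ref{IsQBT} coincides with the free Laplacian $-\Delta$ on $H^2(\R^2)$; its resolvent set contains $\C \setminus [0,+\infty)$, which is precisely the spectral-parameter range appearing in the conclusion. Second, comparing the definition~\eqref{eq:def_TB} of $T_B$ with the abstract restriction $A_B = A \upharpoonright \Ker(\Gamma_0 + B \Gamma_1)$ from~\eqref{def_A_B_abstract}, one reads off $T_B = T \upharpoonright \Ker(\Gamma_0 + B \Gamma_1)$ verbatim, since the regularity conditions defining $\Dom(T)$ are already baked into~\eqref{eq:def_TB} via the requirements $f \in H^{1/2}_\Delta(\R^2 \setminus \Sigma)$ and $\dzbar f_\pm \in H^{1/2}(\Omega_\pm)$. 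Third, the compactness and self-adjointness of $B$, together with the factorization $B = B_1 B_2$ through the auxiliary Hilbert space $\mathcal{K}$, constitute exactly the standing hypotheses of Theorem~\ref{GBT_TB_SA}.

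With these identifications, Theorem~\ref{GBT_TB_SA} immediately delivers all claims: its self-adjointness statement gives that $T_B$ is self-adjoint in $L^2(\R^2)$; its item~(i) yields the Birman-Schwinger identity in item~(i) here, together with the equivalence $w \in \sigma_{\textup{p}}(T_B) \Leftrightarrow 0 \in \sigma_{\textup{p}}(I + B_2 M(w) B_1)$; and its items~(ii) and~(iii) yield both the continuous invertibility of $I + B_2 M(w) B_1$ on $\mathcal{K}$ and the Krein resolvent formula in item~(ii) here, using that for the self-adjoint extension $T_B$ the abstract identity $\rho(A_0) \setminus \sigma_{\textup{p}}(T_B) = \rho(A_0) \cap \rho(T_B)$ matches the parameter set $\rho(T_B) \cap (\C \setminus [0,+\infty))$ appearing in the statement.

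There is no genuine obstacle left in the proof itself: the substantive analytic work — the verification that $\{L^2(\Sigma;\C^2),\Gamma_0,\Gamma_1\}$ is a generalized boundary triple for $\overline{T}$, the explicit identification of $\gamma(w)$ and $M(w)$ via single-layer and Wirtinger-layer potentials, and the abstract Birman-Schwinger and Krein arguments — has been carried out already in Proposition~\ref{IsQBT} and in the proof of Theorem~\ref{GBT_TB_SA}. The only subtlety worth highlighting is that the factorization $B = B_1 B_2$ must be carried through consistently so that the Birman-Schwinger operator acts on the (potentially smaller) space $\mathcal{K}$; this is exactly what will pay off later when $\dim \mathcal{K} < +\infty$ reduces the spectral analysis of $T_B$ to a finite-dimensional determinant condition.
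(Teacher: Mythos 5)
Your proposal is correct and takes exactly the same route as the paper: the paper gives no separate proof, stating directly that Theorem~\ref{TBSelfadjoint} ``follows immediately from Theorem~\ref{GBT_TB_SA}'' once the generalized boundary triple from Proposition~\ref{IsQBT} is in hand. Your identification of $A_0 = -\Delta$ with $\rho(-\Delta) = \C\setminus[0,+\infty)$, of $T_B = T \upharpoonright \Ker(\Gamma_0 + B\Gamma_1)$, and your reading of the factorization $B=B_1B_2$ through $\mathcal{K}$ are all precisely the checks the paper implicitly relies on.
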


By choosing $B_1 = I$ and $B_2 = B$ in Theorem~\ref{TBSelfadjoint} we get the standard form of the Birman-Schwinger principle and Krein's resolvent formula for $T_B$. Moreover, if $B$ is compact in $H^t(\Sigma; \mathbb{C}^2)$ for some suitable $t$, then the Sobolev regularity in $\Dom(T_B)$ improves. We remark that the latter is the case, if $B$ is a finite rank operator and $\Ran(B) \subseteq H^t(\Sigma; \mathbb{C}^2)$.

\begin{corollary} \label{corollary_TBSelfadjoint}
Let $B$ be a compact and self-adjoint operator in $L^2(\Sigma; \mathbb{C}^2)$ and $T_B$ be defined by~\eqref{eq:def_TB}. Then the following statements hold:

\begin{enumerate}
\item[\textup{(i)}] For $w \in \mathbb{C} \setminus [0, +\infty)$ there holds
\begin{equation*}
\Ker(T_B - w) = \gamma(w) \Ker(I + B M(w)).
\end{equation*}
In particular, $w \in \sigma_{\textup{p}}(T_B)$ if and only if $0 \in \sigma_{\textup{p}}(I + B M(w))$.
\item[\textup{(ii)}] For any $w \in \rho(T_B) \cap ( \mathbb{C} \setminus [0, +\infty) )$ the map $I + B M(w)$ is continuously invertible in $L^2(\Sigma; \mathbb{C}^2)$ and  there holds 
\begin{equation*}
( T_B - w )^{-1} = ( - \Delta - w )^{-1} - \gamma(w) ( I + B M(w) )^{-1} B \gamma(\overline{w})^{\ast}.
\end{equation*}

\item[\textup{(iii)}] Assume that $B : H^s(\Sigma; \mathbb{C}^2) \to H^t(\Sigma; \mathbb{C}^2)$ is bounded for some $s \in [0,\frac{1}{2}],\, t\in[s,\frac{3}{2}]$, that $B$ is compact in $H^t(\Sigma; \mathbb{C}^2)$, and that $\Sigma$ is a $C^3$-boundary. Then
\begin{equation*}
\Dom(T_B) \subseteq \left\{ f \in H^{t+1/2}_{\Delta}(\mathbb{R}^2 \setminus \Sigma) \: \big| \: \dzbar f_{\pm} \in H^{t+1/2}(\Omega_{\pm}) \right\}.
\end{equation*}
\end{enumerate}
\end{corollary}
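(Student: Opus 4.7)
Items (i) and (ii) follow immediately from Theorem~\ref{TBSelfadjoint} by choosing the intermediate Hilbert space $\mathcal{K}=L^2(\Sigma;\C^2)$ together with $B_1=I$ and $B_2=B$; no further argument is needed.

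The proof of (iii) is driven by the resolvent formula of (ii). Since $T_B$ is self-adjoint, every $w\in\C\setminus\R$ lies in $\rho(T_B)\cap(\C\setminus[0,+\infty))$. Fix such a $w$ and, given $f\in\Dom(T_B)$, set $g:=(T_B-w)f\in L^2(\R^2)$, so that
\begin{equation*}
f=(-\Delta-w)^{-1}g-\gamma(w)\psi,\qquad \psi:=(I+BM(w))^{-1}B\gamma(\overline{w})^{*}g.
\end{equation*}
The first summand lies in $H^{2}(\R^2)$, hence in the target space. For the second term, Proposition~\ref{IsQBT}(ii) yields $\gamma(\overline{w})^{*}g\in H^{1}(\Sigma)\times H^{1/2}(\Sigma)\hookrightarrow H^{s}(\Sigma;\C^2)$ since $s\in[0,\tfrac{1}{2}]$, so by hypothesis $\chi:=B\gamma(\overline{w})^{*}g\in H^{t}(\Sigma;\C^2)$.

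The key step is to upgrade $\psi$ from $L^2$- to $H^t$-regularity, which amounts to inverting $I+BM(w)$ on $H^{t}(\Sigma;\C^2)$. Because $\Sigma$ is $C^3$, Remark~\ref{Better_Mapping_Properties} applied with the choice of parameters $a=b=t+\tfrac{1}{2}\in[\tfrac{1}{2},2]$ gives that $M(w):H^{t}(\Sigma;\C^2)\to H^{t}(\Sigma;\C^2)$ is bounded. Moreover, the compact embedding $H^{t}(\Sigma;\C^2)\hookrightarrow H^{s}(\Sigma;\C^2)$ followed by $B:H^{s}\to H^{t}$ shows that $B$ is bounded on $H^{t}(\Sigma;\C^2)$; together with the assumed compactness of $B$ in $H^{t}(\Sigma;\C^2)$, this makes $BM(w)$ compact on $H^{t}(\Sigma;\C^2)$. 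Consequently $I+BM(w)$ is Fredholm of index zero on $H^{t}(\Sigma;\C^2)$; its injectivity on $H^t$ is inherited from the $L^2$-invertibility established in (ii) (any $\psi\in H^{t}\subset L^{2}$ in its kernel must vanish). The Fredholm alternative then delivers a bounded inverse on $H^{t}(\Sigma;\C^2)$, and therefore $\psi\in H^{t}(\Sigma;\C^2)$.

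A second application of Remark~\ref{Better_Mapping_Properties} with the same parameters $a=b=t+\tfrac{1}{2}$ shows that both $\gamma(w)$ and $\dzbar\gamma(w)$ map $H^{t}(\Sigma;\C^2)$ boundedly into $H^{t+1/2}_{\Delta}(\R^2\setminus\Sigma)$. Combining this with the $H^{2}$-regularity of $(-\Delta-w)^{-1}g$ yields $f\in H^{t+1/2}_{\Delta}(\R^2\setminus\Sigma)$ together with $\dzbar f_{\pm}\in H^{t+1/2}(\Omega_{\pm})$, which is the asserted inclusion. The main obstacle is the Fredholm-alternative argument of the previous paragraph: one must simultaneously use that $B$ smoothes from $H^{s}$ into $H^{t}$ (to extract compactness of $BM(w)$ on $H^{t}$) and transfer injectivity from $L^{2}$ down to $H^{t}$; once both are in hand, the required regularity follows from the mapping properties of the $\gamma$-field collected in Remark~\ref{Better_Mapping_Properties}.
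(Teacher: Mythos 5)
Your proof takes essentially the same route as the paper's: items (i) and (ii) follow from Theorem~\ref{TBSelfadjoint} with $B_1 = I$ and $B_2 = B$, and for item (iii) you use the resolvent formula, push the regularity through $\gamma(\overline{w})^*$ and $B$, invert $I + BM(w)$ on $H^t(\Sigma;\C^2)$ via a Fredholm-index-zero argument (injectivity on $H^t$ inherited from $L^2$), and conclude with the mapping properties of $\gamma(w)$ and $\dzbar\gamma(w)$ from Remark~\ref{Better_Mapping_Properties}. Your write-up is merely somewhat more explicit than the paper's (e.g., it spells out why $BM(w)$ is compact on $H^t$ and how injectivity transfers down from $L^2$), but the decomposition, the key lemma, and the flow of the argument are identical, so there is nothing further to compare.
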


\begin{proof}
The claims in~(i) and~(ii) follow immediately from Theorem~\ref{TBSelfadjoint}, so it remains to prove item~(iii). Let $w \in \mathbb{C} \setminus \mathbb{R}$ be fixed. Taking the assumptions on $B$ and the mapping properties of $M(w)$ from Remark~\ref{Better_Mapping_Properties} into account, one sees that $I + B  M(w) : H^t(\Sigma; \C^2) \to H^t(\Sigma; \C^2)$ is a Fredholm operator with index $0$. Since $T_B$ is self-adjoint by Theorem~\ref{TBSelfadjoint}, it follows with~(i) that $I + B  M(w)$ is injective. Therefore, $I + B  M(w)$ is bijective in $H^t(\Sigma; \mathbb{C}^2)$. With Proposition~\ref{IsQBT}~(ii) and Remark~\ref{Better_Mapping_Properties} this yields that
\begin{equation*}
\Ran\big( \gamma(w) \big( I + B M(w) \big)^{-1} B \gamma(\overline{w})^* \big) \subseteq H^{t+1/2}_\Delta(\R^2 \setminus \Sigma)
\end{equation*}
and
\begin{equation*}
\Ran\big( \dzbar \gamma(w) \big( I + B M(w) \big)^{-1} B \gamma(\overline{w})^* \big) \subseteq H^{t+1/2}_\Delta(\R^2 \setminus \Sigma),
\end{equation*}
which together with $\Ran(-\Delta-w)^{-1}=H^2(\R^2)$ shows the claimed inclusion in~(iii) and finishes the proof.
\end{proof}

\subsection{Spectrum of $T_B$} \label{section_spectrum}

In this subsection we analyze the spectrum of $T_B$; cf. Theorem~\ref{SpectrumTB}. In particular, we will prove that $T_B$ is unbounded from below, if $B : L^2(\Sigma; \mathbb{C}^2) \to L^2(\Sigma; \mathbb{C}^2) $, given by 
 \begin{equation} \label{Decomposition_B}
	B = \begin{pmatrix}	B_{11}& B_{12} \\ B_{12}^\ast & B_{22}
	\end{pmatrix},
\end{equation}
is a compact and non-negative operator and  $B_{22}$ has infinitely many positive eigenvalues. For this, some preliminary considerations are necessary. In the next two propositions we analyze the limit of the Weyl function $M(w)$ associated with the generalized boundary triple $\{ L^2(\Sigma; \mathbb{C}^2), \Gamma_0, \Gamma_1 \}$, as the spectral parameter $w$ tends to $-\infty$. The first proposition deals with the convergence of $\sqrt{|w|}S(w)$, where $S(w)$ is the single layer boundary integral operator defined by~\eqref{def_single_layer_boundary_integral_operator} viewed as an operator in $L^2(\Sigma)$. To prove the result, we additionally assume that $\Sigma$ is a $C^2$-smooth loop, i.e. that $\Sigma$ has a $C^2$-smooth parametrization $\zeta$ as described at the beginning of Section~\ref{section_function_spaces}.

\begin{proposition}\label{lem_convergence_S}
	Let $\Sigma$ be a $C^2$-smooth loop. Then $\sqrt{|w|}S(w) : L^2(\Sigma) \to L^2(\Sigma)$ is uniformly bounded  with respect to $w \in (-\infty,0)$ and  converges in the strong sense to $\tfrac{1}{2}I$ for $w \to -\infty$.
\end{proposition}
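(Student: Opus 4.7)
The plan is to combine a uniform Schur-type bound from Proposition~\ref{prop.-sing.int.op.on Sigma} with the identification of the strong limit on the dense subspace $C(\Sigma) \subset L^2(\Sigma)$ via a blow-up (rescaling) argument. For $w = -\kappa^2$ with $\kappa > 0$, the convention $\textup{Im}\sqrt{w} > 0$ forces $\sqrt{w} = i\kappa$, so $-i\sqrt{w} = \kappa$ and the kernel of $S(w)$ in~\eqref{IntegralRepresSLBIO} reduces to $\frac{1}{2\pi} K_0(\kappa |x_\Sigma - y_\Sigma|)$ with $K_0$ evaluated on $(0,+\infty)$, where it is positive and strictly decreasing by Lemma~\ref{lemma_Bessel_functions}~(i)-(ii).

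\textbf{Step 1 (uniform bound).} I would apply Proposition~\ref{prop.-sing.int.op.on Sigma} with the monotonically decreasing majorant $\alpha(r) := \frac{1}{2\pi} K_0(\kappa r)$; its integrability on $(0, C_\zeta L/2)$ follows from Lemma~\ref{lemma_Bessel_functions}~(iii)-(iv). The substitution $u = \kappa s$ gives
\begin{equation*}
\kappa \| S(-\kappa^2)\|_{L^2(\Sigma) \to L^2(\Sigma)} \leq \frac{1}{\pi C_\zeta}\int_0^{\kappa C_\zeta L/2} K_0(u)\,\dd u \leq \frac{1}{\pi C_\zeta}\int_0^{\infty} K_0(u)\,\dd u = \frac{1}{2 C_\zeta},
\end{equation*}
using the classical identity $\int_0^\infty K_0(u)\,\dd u = \pi/2$. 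This yields the asserted uniform boundedness.

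\textbf{Step 2 (strong convergence on $C(\Sigma)$).} Fix $\varphi \in C(\Sigma)$ and $x_\Sigma = \zeta(t) \in \Sigma$. Using $L$-periodicity of the extended parametrization and then substituting $u = \kappa(s-t)$,
\begin{equation*}
\kappa S(-\kappa^2)\varphi(x_\Sigma) = \frac{1}{2\pi}\int_{-\kappa L/2}^{\kappa L/2} K_0\bigl(\kappa\,|\zeta(t) - \zeta(t + u/\kappa)|\bigr)\, \varphi(\zeta(t + u/\kappa))\,\dd u.
\end{equation*}
The bi-Lipschitz inequality~\eqref{eq_bi_Lipschitz} implies $\kappa|\zeta(t)-\zeta(t+u/\kappa)| \geq C_\zeta |u|$ for $|u|\leq \kappa L/2$, so the integrand is dominated by $\frac{1}{2\pi}\|\varphi\|_\infty K_0(C_\zeta|u|)$, which is integrable over $\R$ (logarithmic at $0$ by Lemma~\ref{lemma_Bessel_functions}~(iii), exponential decay at infinity by Lemma~\ref{lemma_Bessel_functions}~(iv)). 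Here the $C^2$-smoothness of $\zeta$ together with $|\dot\zeta|=1$ gives the pointwise limit $\kappa|\zeta(t)-\zeta(t+u/\kappa)| \to |u|$ as $\kappa\to \infty$, while continuity of $\varphi$ yields $\varphi(\zeta(t+u/\kappa)) \to \varphi(x_\Sigma)$. Dominated convergence then produces
\begin{equation*}
\lim_{\kappa\to\infty} \kappa S(-\kappa^2)\varphi(x_\Sigma) = \frac{\varphi(x_\Sigma)}{2\pi}\int_{-\infty}^{\infty} K_0(|u|)\,\dd u = \frac{1}{2}\varphi(x_\Sigma).
\end{equation*}
Compactness of $\Sigma$, together with uniform continuity of $\zeta$ and $\varphi$, lets one make this limit uniform in $x_\Sigma$, so $\kappa S(-\kappa^2)\varphi \to \frac{1}{2}\varphi$ in $C(\Sigma)$ and hence in $L^2(\Sigma)$.

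\textbf{Step 3 (density).} Since $C(\Sigma)$ is dense in $L^2(\Sigma)$ and $\{\kappa S(-\kappa^2)\}_{\kappa > 0}$ is uniformly norm-bounded by Step~1, a standard $\varepsilon/3$-argument promotes the strong convergence from $C(\Sigma)$ to all of $L^2(\Sigma)$.

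The only delicate point is Step~2: keeping the singular integrand uniformly controlled after rescaling. The bi-Lipschitz bound~\eqref{eq_bi_Lipschitz} supplies the integrable majorant near $u=0$, while the $C^2$-hypothesis on $\zeta$ is exactly what is needed to identify the limiting expression $\kappa|\zeta(t)-\zeta(t+u/\kappa)| \to |u|$; Lipschitz regularity alone would not suffice to pin down this limit pointwise.
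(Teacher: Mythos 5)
Your proof is correct and, while it follows the same overall architecture as the paper (reduce to $\varphi \in C(\Sigma)$ by density and uniform boundedness, prove convergence there), it takes a cleaner route in both main steps. For the uniform bound, the paper invokes an external resolvent estimate from Galkowski--Smith \cite{GS14}, whereas you derive the sharp explicit bound $\kappa \|S(-\kappa^2)\| \leq \tfrac{1}{2C_\zeta}$ directly from the paper's own Schur-test criterion (Proposition~\ref{prop.-sing.int.op.on Sigma}) combined with $\int_0^\infty K_0 = \tfrac{\pi}{2}$; this is self-contained and arguably preferable. For the pointwise/strong convergence, the paper executes a four-part decomposition ($g_{w,1},\dots,g_{w,4}$) with a cutoff at scale $\ln|w|/\sqrt{|w|}$, Taylor estimates, and a mean-value-theorem step, while you rescale $u=\kappa(s-t)$ and feed everything into a single application of dominated convergence with the majorant $K_0(C_\zeta|u|)$; this is shorter and transparent. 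One minor remark: your uniform-in-$x_\Sigma$ claim at the end of Step~2 is more than is needed and, as stated, would require a short additional argument (one has to push the $\sup_t$ inside the $u$-integral and redominate); the paper instead passes to $L^2(\Sigma)$ directly by noting that the pointwise limits are bounded by the constant $\|\varphi\|_\infty/(2C_\zeta)$ and applying dominated convergence in the $t$-variable, which is slightly more economical. Also, $C^1$-regularity (with the bi-Lipschitz property) would already give $\kappa|\zeta(t)-\zeta(t+u/\kappa)|\to|u|$ via $\zeta(t+h)-\zeta(t)=h\int_0^1\dot\zeta(t+\tau h)\,\dd\tau$; what genuinely fails for merely Lipschitz $\zeta$ is the existence of $\dot\zeta(t)$, so your concluding remark is correct in spirit but attributes the necessity to $C^2$ when $C^1$ would do for the pointwise limit.
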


\begin{proof}
   Let $L$ be the length of $\Sigma$ and $\zeta$ be an arc-length parametrization which satisfies \eqref{eq_bi_Lipschitz}.
	According to \cite[Thm.~1.3]{GS14} the norm of $S(w) : L^2(\Sigma) \to L^2(\Sigma)$, $w \in (-\infty,0)$, can be estimated by 
	\begin{equation*}
		\| S(w)\| \leq  \frac{C}{(2+|w|)^{1/2}} \ln(2+|w|^{-1})^{1/2},
	\end{equation*} 
	where $C>0$ does not depend on $w \in (-\infty,0)$.
	This implies the uniform boundedness of $\sqrt{|w|}S(w)$. Hence, it remains to prove the claim concerning the strong convergence. Since $\sqrt{|w|}S(w)$ is uniformly bounded and $C(\Sigma)$ is dense in $L^2(\Sigma)$, see  \cite[Thm.~3.14]{R87}, it suffices by \cite[Lem.~III.3.5]{kato} to prove 
	\begin{equation*}
		\lim_{w \to - \infty} \sqrt{|w|}S(w) \varphi = \frac{1}{2} \varphi  \quad \textup{in } L^2(\Sigma)
	\end{equation*} 
	for all $\varphi \in C(\Sigma)$. Let  $\varphi \in C(\Sigma)$ be fixed. First, we show  that $\sqrt{|w|}S(w)\varphi $ converges pointwise to $\varphi/2$. With~\eqref{IntegralRepresSLBIO} one sees for $t \in [0,L]$ that
	\begin{equation*}
		\begin{aligned}
			(\sqrt{|w|}S(w)\varphi)(\zeta(t)) &= \frac{1}{2 \pi}\int_{t-L/2}^{t+L/2} \sqrt{|w|}K_0(\sqrt{|w|}|\zeta(t) - \zeta(s)|) \varphi(\zeta(s)) \dd s \\
			&=g_{w,1}(t) + g_{w,2}(t) 
		\end{aligned}
	\end{equation*}
	with $g_{w,1}, g_{w,2} : [0, L] \to \mathbb{C}$ defined by
	\begin{equation*}
		\begin{aligned}
			g_{w,1}(t)&= \frac{1}{2\pi} \int_{B(t, \ln|w| / \sqrt{|w|})} \sqrt{|w|}K_0(\sqrt{|w|}|\zeta(t) - \zeta(s)|) \varphi(\zeta(s)) \dd s, \\
			g_{w,2}(t)&= (\sqrt{|w|}S(w)\varphi)(\zeta(t))  - g_{w,1}(t), 
		\end{aligned}
	\end{equation*}
where we used that our choice of the complex square root yields $-i \sqrt{w} = \sqrt{|w|}$ for $w<0$.
	Next, we consider the pointwise convergence of $g_{w,2}$. Let $t \in [0,L]$ be fixed. If $|w|>0$ is sufficiently large, then $ B(t, \ln|w| / \sqrt{|w|}) \subseteq B(t, L/2)$. Consequently,
	\begin{equation*}
		g_{w,2}(t)= \frac{1}{2\pi} \int_{B(t,L/2)\setminus B(t, \ln|w| / \sqrt{|w|})} \sqrt{|w|}K_0(\sqrt{|w|}|\zeta(t) - \zeta(s)|) \varphi(\zeta(s)) \dd s.
	\end{equation*}
	Due to Lemma~\ref{lemma_Bessel_functions}~(i)\&(iv) there exists a constant $C>0$ such that the estimate $0<K_0(\upsilon) \leq C \ee^{-\upsilon}$ is valid for all $\upsilon>1$. Furthermore, \eqref{eq_bi_Lipschitz} implies for all $s \in B(t,L/2)\setminus B(t, \ln|w| / \sqrt{|w|})$ and $|w|$ sufficiently large
	\begin{equation*}
		\sqrt{|w|}|\zeta(t) - \zeta(s)| \geq 	C_\zeta  \sqrt{|w|}| t -s | \geq C_\zeta  \ln|w| >1.
	\end{equation*}
	This and  estimations of the integrals lead to 
	\begin{equation*}
		\begin{aligned}
			|g_{w,2}(t)| &\leq C\|\varphi\|_{L^\infty(\Sigma)}\int_{B(t,L/2)\setminus B(t, \ln|w| / \sqrt{|w|} )} \sqrt{|w|} \ee^{-C_\zeta  \sqrt{|w|}| t -s | } \dd s\\
			&\leq  C\|\varphi\|_{L^\infty(\Sigma)}  \int_{ \ln |w| / \sqrt{|w|}}^\infty \sqrt{|w|}\ee^{-C_\zeta  \sqrt{|w|}s} \dd s\\
			&=   C\|\varphi\|_{L^\infty(\Sigma)}  \int_{ \ln |w|}^\infty \ee^{-C_\zeta s} \dd s.
		\end{aligned}
	\end{equation*}
	Hence, $g_{w,2}(t) \rightarrow 0$ for $w \to -\infty$. To compute the limit of $g_{w,1}(t)$, we write $g_{w,1}(t) = g_{w,3}(t) + g_{w,4}(t)$ with $g_{w,3}, g_{w,4} : [0,L] \to \mathbb{C}$ defined by 
	\begin{equation*}
		\begin{aligned}
			g_{w,3}(t) &= \frac{1}{2\pi} \int_{B(t, \ln|w| /\sqrt{|w|})} \sqrt{|w|}K_0(\sqrt{|w|}|t-s|) \varphi(\zeta(s)) \dd s \\
			g_{w,4}(t) &= \frac{1}{2\pi} \int_{B(t, \ln|w| / \sqrt{|w|})} \sqrt{|w|}\Big(K_0(\sqrt{|w|}|\zeta(t) - \zeta(s)|) \\
			&\qquad \qquad \qquad \qquad \qquad \qquad  -K_0(\sqrt{|w|}|t-s|) \Big)\varphi(\zeta(s)) \dd s.
		\end{aligned}
	\end{equation*}
	To analyze the behaviour of $g_{w,4}(t)$ for $w \to - \infty$, we introduce for $\vartheta \in [0,1]$ the function 
	\begin{equation*}
		\zeta_{\vartheta}(t,s) = \vartheta|\zeta(t) - \zeta(s)| + (1-\vartheta) |t-s|.
	\end{equation*}
	Then, by the fundamental theorem of calculus one has
	\begin{equation} \label{RepresGW4}
		\begin{split}
			g_{w,4}(t) = \frac{1}{2\pi}&\int_{B(t, \ln|w| / \sqrt{|w|})} |w| \\
			&\qquad \int_0^1 K_0'(\sqrt{|w|} \zeta_\vartheta(t,s)) (\zeta_1(t,s) - \zeta_0(t,s)) \, d \vartheta \varphi(\zeta(s))\dd s. 
		\end{split}
	\end{equation}
	Since $\zeta$ is an $L$-periodic $C^2$-function with $|\dot{\zeta}(t)| =1$, one finds with Taylor's theorem that there exists a constant $C>0$ such that  
	\begin{equation}\label{estimate_gamma_10}
		|\zeta_1(t,s) - \zeta_0(t,s)| \leq C |t-s|^2 \quad \forall s \in B(t, \ln|w| / \sqrt{|w|}).
	\end{equation}
	Furthermore, as $B(t,\ln|w| / \sqrt{|w|}) \subseteq B(t,L/2)$ for sufficiently large $|w|>0$, \eqref{eq_bi_Lipschitz} implies
	\begin{equation} \label{BoundGammaTau}
		C_\zeta |t-s| \leq \zeta_\vartheta(t,s) \leq  |t-s| \quad \forall s \in B\big(t,\ln|w| /\sqrt{|w|}\big), \vartheta \in [0,1].
	\end{equation}
	The recurrence relations and asymptotics of the modified Bessel functions in Lemma~\ref{lemma_Bessel_functions} yield  that there exists a  constant $C >0$ such that 
		\begin{equation} \label{BoundDervK0}
			|K_0'(\upsilon)| = K_1(\upsilon) \leq C \left ( \frac{\ee^{- \upsilon}}{\upsilon} + \frac{\ee^{- \upsilon}}{\sqrt{\upsilon}} \right) \leq C \frac{\ee^{-\tfrac{\upsilon}{2}}}{\upsilon}, \quad \forall \upsilon \in (0,+\infty).
	\end{equation}
	Using \eqref{estimate_gamma_10}, \eqref{BoundGammaTau}, and \eqref{BoundDervK0} in \eqref{RepresGW4} leads to 
		\begin{equation*}
			\begin{aligned}
				|g_{w,4}(t)| &\leq C\|\varphi\|_{L^\infty(\Sigma)}  \int_{B(t, \ln|w| / \sqrt{|w|})} |w|(t-s)^2  \frac{\ee^{-\tfrac{ C_\zeta}{2} \sqrt{|w|}|t-s|}}{\sqrt{|w|}|t-s|} \dd s\\
				& = C\|\varphi\|_{L^\infty(\Sigma)}  \frac{1}{\sqrt{|w|}}\int_{B(0, \ln|w| )} |s|  \ee^{-\tfrac{ C_\zeta}{2} |s|} \dd s \\
				& \leq C\|\varphi\|_{L^\infty(\Sigma)}  \frac{1}{\sqrt{|w|}}\int_0^\infty s  \ee^{-\tfrac{ C_\zeta}{2} s} \dd s.
			\end{aligned}
	\end{equation*} 
	Hence, $g_{w,4}(t) \rightarrow 0$ for $w \to -\infty$. To complete the proof of the pointwise convergence $\sqrt{|w|} S(w)\varphi \to \varphi/2$, it remains to investigate  $g_{w,3}(t)$ for $w \to - \infty$. Due to  $0< K_0(\upsilon)$  for every $\upsilon>0$ and the continuity of $\varphi \circ \zeta $, the mean value theorem for integrals yields the existence of an $s_{t,w} \in B\big(t, \ln|w|  /\sqrt{|w|}\big)$ such that 
	\begin{equation*}
		\begin{aligned}
			g_{w,3}(t) &= \varphi(\zeta(s_{t,w}))\frac{1}{2\pi}\int_{B\big(t, \ln|w| / \sqrt{|w|} \big)} \sqrt{|w|}K_0(\sqrt{|w|}|t-s|)  \dd s \\
			&= \varphi(\zeta(s_{t,w})) \frac{1}{\pi} \int_{0}^{\ln|w|} K_0(s) \dd s.
		\end{aligned}
	\end{equation*}
	Consequently, with 
	\begin{equation} \label{IntegralBesselK0}
		\int_0^{\infty} K_0(s )  \dd s = \frac{\pi}{2}
	\end{equation}
	we conclude that $g_{w,3}(t)  \to \frac{1}{2} \varphi(\zeta(t))$ for $w \to - \infty$.  Since this holds for all $t \in [0,L]$, 	$(\sqrt{|w|}S(w)\varphi)\circ \zeta$ converges pointwise to $\tfrac{1}{2}\varphi \circ \zeta$, which in particular yields the  pointwise convergence of $\sqrt{|w|}S(w)\varphi$ to $\varphi/2$ for $w \to - \infty$.

	To finish the proof, we show that $\sqrt{|w|} S(w) \varphi \rightarrow \frac{1}{2} \varphi$ also in $L^2(\Sigma)$. For this, note  that $K_0$ is strictly monotonically decreasing on $(0, +\infty)$ and hence, using~\eqref{eq_bi_Lipschitz} we find for all $t \in \mathbb{R}$
	\begin{equation*}
		\begin{aligned}
			\big|\sqrt{|w|}S(w)\varphi(\zeta(t))\big| &\leq  \frac{\sqrt{|w|}}{2\pi} \int_{t-L/2}^{t+L/2} K_0(\sqrt{|w|}C_\zeta |t-s|) |\varphi(\zeta(s))| \dd s\\
			&\leq \| \varphi \|_{L^\infty(\Sigma)}  \frac{1}{C_\zeta \pi} \int_0^\infty K_0(s) \dd s =  \frac{\| \varphi \|_{L^\infty(\Sigma)} }{2 C_\zeta},
		\end{aligned}
	\end{equation*}
	where \eqref{IntegralBesselK0} was used in the last equality. Therefore, the pointwise upper bound $|\sqrt{|w|}S(w)\varphi(x_\Sigma)| \leq    \| \varphi \|_{L^\infty(\Sigma)} / (2 C_\zeta)$ holds for all $x_\Sigma \in \Sigma$. Thus, an application of the dominated convergence theorem  shows that $\sqrt{|w|} S(w) \varphi$ converges to $\frac{1}{2} \varphi$ in $L^2(\Sigma)$, which finishes the proof.
\end{proof}

By \cite[Lem.~6.9~(ii)]{T09} the composition of $S(w)$ with a compact operator turns the strong convergence in Proposition~\ref{lem_convergence_S} into convergence in the operator norm. Thus, we obtain immediately the following corollary.

\begin{corollary}\label{cor_SK_convergence}
	Additionally to the assumptions of Proposition \ref{lem_convergence_S}, suppose that $K$ is a compact operator in $L^2(\Sigma)$. Then, $\sqrt{|w|}S(w)K$ converges in the operator norm to $K/2$ for $w \to -\infty$. 	
\end{corollary}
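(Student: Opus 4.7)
The plan is to derive the corollary as a direct consequence of Proposition~\ref{lem_convergence_S} via the standard principle that the composition of a uniformly bounded strongly convergent net with a compact operator yields norm convergence. Concretely, I would introduce the remainder operator
\begin{equation*}
  R(w) := \sqrt{|w|}\,S(w) - \tfrac{1}{2}I, \qquad w \in (-\infty,0),
\end{equation*}
so that $\sqrt{|w|}\,S(w)K - \tfrac{1}{2}K = R(w)K$. By Proposition~\ref{lem_convergence_S} the family $\{R(w)\}_{w<0}$ is uniformly bounded in $L^2(\Sigma)$ and satisfies $R(w)\varphi \to 0$ in $L^2(\Sigma)$ for every $\varphi \in L^2(\Sigma)$ as $w \to -\infty$.

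Next, I would exploit compactness of $K$. Denote by $\mathcal{B}$ the closed unit ball in $L^2(\Sigma)$; since $K$ is compact, the set $\mathcal{K}:= \overline{K(\mathcal{B})}$ is a compact subset of $L^2(\Sigma)$. The goal then reduces to showing
\begin{equation*}
  \sup_{\psi \in \mathcal{K}} \|R(w)\psi\|_{L^2(\Sigma)} \xrightarrow[w\to-\infty]{} 0,
\end{equation*}
since the left-hand side majorises $\|R(w)K\|$. This is a standard argument: pointwise convergence of a uniformly bounded family of operators is uniform on compact sets. Given $\varepsilon>0$, one covers $\mathcal{K}$ by finitely many balls of radius $\varepsilon/(2M)$ centred at points $\psi_1,\dots,\psi_N$, where $M:= \sup_{w<0}\|R(w)\|$. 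For $w$ sufficiently negative, $\|R(w)\psi_j\| < \varepsilon/2$ for all $j=1,\dots,N$, and then for an arbitrary $\psi \in \mathcal{K}$ picking a nearby $\psi_j$ gives $\|R(w)\psi\| \leq \|R(w)(\psi-\psi_j)\| + \|R(w)\psi_j\| < \varepsilon$.

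This precisely yields $\|R(w)K\|\to 0$, i.e. $\sqrt{|w|}\,S(w)K \to K/2$ in operator norm. Since the underlying mechanism is exactly the classical fact recorded in \cite[Lem.~6.9~(ii)]{T09}, one may simply invoke that reference after verifying its two hypotheses, namely uniform boundedness of $\sqrt{|w|}S(w)$ on $(-\infty,0)$ and strong convergence to $\tfrac{1}{2}I$, both furnished by Proposition~\ref{lem_convergence_S}. There is no genuine obstacle here; the only thing to be careful about is that the convergence is along a continuous parameter $w\to-\infty$ rather than a sequence, but the finite-covering argument above works verbatim in this setting.
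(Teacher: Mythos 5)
Your proof is correct and takes essentially the same approach as the paper, which simply invokes \cite[Lem.~6.9~(ii)]{T09}. You have merely unpacked the standard covering argument behind that reference; the decomposition via $R(w)$, the reliance on uniform boundedness plus strong convergence from Proposition~\ref{lem_convergence_S}, and the exploitation of compactness of $K$ all match the intended mechanism.
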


In the next proposition we analyze the behaviour of $W(w)$ defined  in~\eqref{def_L}, as $w \rightarrow -\infty$. We point out that no $C^2$-assumption on $\Sigma$ is needed for this.

\begin{proposition}\label{lem_boundedness_T} 
Let	$\tau >0$. Then $|w|^{-\tau} W(w) \rightarrow 0$  in the operator norm for $w \to - \infty$.
\end{proposition}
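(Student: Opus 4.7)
The strategy is to show that $\|W(w)\|$ grows at most logarithmically in $|w|$ as $w\to-\infty$, so that $|w|^{-\tau}\|W(w)\|\to 0$ for every $\tau>0$ follows immediately. Recall from Lemma~\ref{lemma_W} that, for $w\in(-\infty,0)$ (so $-i\sqrt{w}=\sqrt{|w|}$), the operator $W(w)$ is a principal-value singular integral operator on $\Sigma$ with kernel
\begin{equation*}
K(x_\Sigma,y_\Sigma;w)=\frac{\sqrt{|w|}}{4\pi}\frac{\overline{{\bf x}_\Sigma-{\bf y}_\Sigma}}{|x_\Sigma-y_\Sigma|}K_1\big(\sqrt{|w|}|x_\Sigma-y_\Sigma|\big).
\end{equation*}
Using the decomposition $K_1(t)=1/t+tg_1(t^2)\ln t+tg_2(t^2)$ from Lemma~\ref{lemma_Bessel_functions}~(iii), I would write $W(w)=W_C+W_R(w)$, where $W_C$ is the $w$-independent Cauchy-type principal-value operator with kernel $\frac{1}{4\pi}\overline{{\bf x}_\Sigma-{\bf y}_\Sigma}/|x_\Sigma-y_\Sigma|^2$, and $W_R(w)$ has kernel $K_R=K-K_C$.

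The main step is the estimate $\|W_R(w)\|\leq C(1+\ln|w|)$, obtained via Proposition~\ref{prop.-sing.int.op.on Sigma}. Writing $r=|x_\Sigma-y_\Sigma|$ and $t=\sqrt{|w|}r$, Lemma~\ref{lemma_Bessel_functions}~(iii) together with the boundedness of $g_1,g_2$ on $\{|s|\le 1\}$ gives for $r\leq 1/\sqrt{|w|}$ (that is, $t\leq 1$) the bound $|K_R|\leq\frac{C\sqrt{|w|}}{4\pi}\,t(1+|\ln t|)\leq C'\sqrt{|w|}$, using that $t(1+|\ln t|)$ is bounded on $(0,1]$. On the complementary range $r>1/\sqrt{|w|}$, Lemma~\ref{lemma_Bessel_functions}~(v) yields exponential decay of $K_1$, and combined with $te^{-t}\leq e^{-1}$ for $t\geq 1$ this gives $|K_R|\leq\frac{C}{4\pi r}$. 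Hence $\alpha(r):=C\min(\sqrt{|w|},1/r)$ is a monotonically non-increasing majorant of $|K_R(\cdot,\cdot;w)|$, and Proposition~\ref{prop.-sing.int.op.on Sigma} yields
\begin{equation*}
\|W_R(w)\|\leq\frac{2}{C_\zeta}\int_0^{C_\zeta L/2}\alpha(s)\,\dd s\leq\frac{2}{C_\zeta}\Big(C+C\ln\tfrac{C_\zeta L\sqrt{|w|}}{2}\Big)\leq C''(1+\ln|w|).
\end{equation*}

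It remains to show that $W_C$ is bounded on $L^2(\Sigma)$. Rather than invoking the Coifman--McIntosh--Meyer theorem on Cauchy integrals along bi-Lipschitz curves, one may fix any $w_0\in\mathbb{C}\setminus[0,+\infty)$, observe that $W(w_0)$ is bounded on $L^2(\Sigma)$ as an off-diagonal block of the bounded Weyl function $M(w_0)$ (Proposition~\ref{IsQBT}~(iii)), and use the previous paragraph with $|w|=|w_0|$ to conclude that $W_R(w_0)$ is bounded; then $W_C=W(w_0)-W_R(w_0)$ is bounded. Combining this with the logarithmic estimate on $W_R(w)$ gives $\|W(w)\|=O(\ln|w|)$, and hence $\||w|^{-\tau}W(w)\|\to 0$ for every $\tau>0$. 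The main obstacle is the non-integrable $1/|x-y|$ singularity inherited from $K_1(t)\sim 1/t$: a direct Schur-test bound on $W(w)$ is impossible, and isolating the $w$-independent Cauchy principal-value operator $W_C$ is precisely what allows one to reduce matters to a remainder $W_R(w)$ whose kernel admits a monotonic majorant integrable up to a logarithmic factor.
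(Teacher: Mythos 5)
Your argument is correct and is, in essence, the paper's argument: both isolate the $w$-independent Cauchy-type operator $W_0$ (your $W_C$) and bound the remainder $W(w)-W_0$ via the Schur test of Proposition~\ref{prop.-sing.int.op.on Sigma}, using the Bessel-function expansion and asymptotics of Lemma~\ref{lemma_Bessel_functions}. Your majorant $\alpha(r)=C\min(\sqrt{|w|},1/r)$ yields the slightly sharper bound $\|W(w)-W_0\|=O(\ln|w|)$, whereas the paper fixes $\delta\in(0,\min\{2\tau,1\})$, uses $1-\ee^{-\upsilon}\le\upsilon^\delta$ to get $|K_1(\upsilon)-1/\upsilon|\le C(\ee^{-\upsilon}+\upsilon^{\delta-1})$, and obtains $\|W(w)-W_0\|=O(|w|^{\delta/2})$; both suffice for the claim. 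Your device for deducing the boundedness of $W_C$ from that of a fixed $W(w_0)$ via Proposition~\ref{IsQBT}~(iii) is valid, but it does not really dispense with \cite{EFV92}: Lemma~\ref{lemma_W}, which you invoke to write the principal-value kernel of $W(w)$ in the first place, already relies on that reference.
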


\begin{proof}
	Let $w < 0$ be fixed. Recall that in this case our choice of the complex square root yields $-i\sqrt{w} = \sqrt{|w|} > 0$. Furthermore, by Lemma~\ref{lemma_W} $W(w)$ is a strongly singular integral operator with kernel $k_W(x_\Sigma - y_\Sigma)$, where
	\begin{equation*}
		k_W(x) = -\frac{1}{4\pi} \sqrt{|w|}K_1(\sqrt{|w|}|x|) \frac{\overline{{\bf x}} }{|x|}.
	\end{equation*}
	Next, let the function $k_{W_0}$ be defined by
	\begin{equation*}
		k_{W_0}(x) = -\frac{1}{4\pi}\frac{\overline{{\bf x}}}{|x|^2}=-\frac{1}{4\pi}\frac{1}{{\bf x}}, \quad x \in \mathbb{R}^2 \setminus \{ 0 \}.
	\end{equation*}
	Then, due to  \cite[p.~1071]{EFV92}, the  strongly singular integral operator
	$W_0:L^2(\Sigma) \to L^2(\Sigma)$ acting as
	\begin{equation*}
		W_0 \varphi(x_\Sigma) := \lim_{\varepsilon \rightarrow 0+} \int_{\Sigma \setminus B(x_\Sigma, \varepsilon)} k_{W_0}(x_\Sigma - y_\Sigma) \varphi(y_\Sigma) \dd \sigma(y_\Sigma), \quad \varphi \in L^2(\Sigma),~x_\Sigma \in \Sigma,
	\end{equation*}
	is well-defined and bounded. Let $\delta \in (0,\min\{2\tau,1\})$ be arbitrary but fixed. Then,  $1-\ee^{-\upsilon} \leq  \upsilon^{\delta}$ for all $\upsilon>0$ and Lemma~\ref{lemma_Bessel_functions} (iii)\&(v) yield for every $\upsilon >0$ the estimate
	\begin{equation*}
		\left|K_1(\upsilon) - \frac{1}{\upsilon} \right| \leq \left|K_1(\upsilon) -\frac{\ee^{-\upsilon}}{\upsilon} \right| + \left|\frac{1-\ee^{-\upsilon}}{\upsilon} \right| \leq C (\ee^{-\upsilon} + \upsilon^{\delta-1})
	\end{equation*}  
	with some constant $C>0$. In particular,  for  $k = k_{W} - k_{W_0}$ this implies for all $x \in \mathbb{R}^2 \setminus \{0\}$ the pointwise bound
	\begin{equation*}
			|k(x)| = \frac{\sqrt{|w|}}{4\pi} \left|K_1(\sqrt{|w|}|x|) - \frac{1}{\sqrt{|w|} |x|} \right| 
			\leq C \Big(\sqrt{|w|} \ee^{-\sqrt{|w|}|x|} + |w|^{\delta/2}|x|^{\delta-1}\Big).
	\end{equation*}
	Therefore, by Proposition~\ref{prop.-sing.int.op.on Sigma}, which can be used due to the assumptions on $\Sigma$,
	\begin{equation*}
		\| W(w) - W_0 \| \leq C \int_0^{C_\zeta L/2 } \sqrt{|w|} \ee^{-\sqrt{|w|}s} + |w|^{\delta/2}s^{\delta-1} \dd s \leq C (1+|w|^{\delta/2}).
	\end{equation*}
	Since $\delta  < 2 \tau$, this finally yields
	\begin{equation*}
		\begin{split}
			|w|^{-\tau} \| W(w) \| &\leq |w|^{-\tau}  \| W_0 \| + |w|^{-\tau} \| W(w) - W_0 \| \\
			&\leq |w|^{-\tau} \| W_0 \|+  C \big(|w|^{-\tau}+|w|^{\delta/2 - \tau}\big) \rightarrow 0
		\end{split}
	\end{equation*}
	for $w \rightarrow -\infty$, which is the claim of this proposition.
\end{proof}

To be able to use the convergence results from Propositions~\ref{lem_convergence_S} and~\ref{lem_boundedness_T} for the spectral analysis of $T_B$, we will use the following lemma about the convergence of eigenvalues of compact operators that converge in the operator norm. This result follows from \cite[\S~IV.3.5]{kato}, see also  \cite[\S~II.5.2 and \S~III.6.5]{kato} for further explanations.

\begin{lemma}\label{lem_convergence_ev}
	Let $(K_n)_{n  \in \N}$ be a sequence of compact and self-adjoint operators in an infinite dimensional Hilbert space $\mathcal{H}$, which converge in the operator norm to a non-negative operator $K$. Moreover, let $(\lambda_k)_{k \in \N}$ and $(\lambda_k^n)_{k \in \N}$ be the decreasing sequences of non-negative eigenvalues of $K$  and $K_n$, respectively, possibly extended by zero and taking their multiplicities into account. Then for every $k \in \N$ there holds
	\begin{equation*}
		\lim_{n \to \infty} \lambda_k^n =  \lambda_k.
	\end{equation*}
\end{lemma}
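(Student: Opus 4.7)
The plan is to derive the lemma from a Weyl-type perturbation inequality obtained via the Courant--Fischer min-max principle. Since $K_n \to K$ in operator norm and $K$ is self-adjoint and non-negative, each $K_n$ is, for large $n$, a small perturbation of a non-negative compact operator, and the non-negative eigenvalues should therefore be stable.

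First, I would invoke the standard min-max characterization for compact self-adjoint operators $A$ on the infinite-dimensional Hilbert space $\mathcal{H}$: writing $\mu_k(A)$ for the $k$-th element of the decreasing sequence of non-negative eigenvalues of $A$ (counted with multiplicity and padded with zeros),
\begin{equation*}
  \mu_k(A) = \inf_{V \subset \mathcal{H},\, \dim V \leq k-1} \ \sup_{v \in V^\perp,\, \|v\|=1} \langle Av, v\rangle_{\mathcal{H}}.
\end{equation*}
Since $\sigma_\mathrm{ess}(A) = \{0\}$ for every compact self-adjoint operator, the right-hand side is automatically non-negative and equals the $k$-th positive eigenvalue of $A$ whenever it exists, which justifies the convention of padding with zeros.

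Second, I would combine this identity with the pointwise bound $|\langle (K_n - K) v, v\rangle_{\mathcal{H}}| \leq \|K_n - K\|$ valid for every unit vector $v$. Taking the supremum over $v \in V^\perp$ with $\|v\| = 1$ and then the infimum over $V$ of dimension at most $k-1$ yields the two-sided estimate
\begin{equation*}
  |\mu_k(K_n) - \mu_k(K)| \leq \|K_n - K\| \xrightarrow{n \to \infty} 0.
\end{equation*}
Since $K \geq 0$, its non-negative eigenvalues exhaust its full eigenvalue sequence, so $\mu_k(K) = \lambda_k$; and by the convention stated in the lemma, $\mu_k(K_n) = \lambda_k^n$. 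This delivers the claimed convergence $\lambda_k^n \to \lambda_k$.

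The main obstacle is to justify the min-max identity in the required generality, because the $K_n$ are not assumed non-negative and may possess negative eigenvalues. One must verify that the displayed infimum really picks out the $k$-th non-negative eigenvalue of $K_n$ (extended by zero) rather than being pulled down by the negative part of the spectrum. I would handle this either by appealing directly to the version of Courant--Fischer in \cite[\S IV.3.5]{kato}, which is formulated precisely for eigenvalues above the essential spectrum of a self-adjoint operator, or alternatively by replacing $K_n$ with its positive part $K_n^+$ defined via spectral calculus: norm-continuity of the positive-part map gives $K_n^+ \to K^+ = K$, and since $K_n$ and $K_n^+$ share the same non-negative eigenvalues, the min-max principle for compact non-negative operators applies without the sign ambiguity.
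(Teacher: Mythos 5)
Your argument is correct, and it is more explicit than the paper's, which offers no proof at all but simply cites Kato, \S IV.3.5 (perturbation of isolated parts of the spectrum), together with \S II.5.2 and \S III.6.5. Your route via the min-max principle and the Weyl-type bound $|\mu_k(K_n)-\mu_k(K)|\le\|K_n-K\|$ is the standard way to make that citation self-contained, and you identify the one genuine subtlety correctly: since the $K_n$ need not be non-negative, one must check that the inf-sup expression returns the $k$-th element of the decreasing sequence of \emph{non-negative} eigenvalues padded with zeros rather than being dragged down by the negative part. Both of your resolutions work. The direct one is cleanest: for a compact self-adjoint $A$ on an infinite-dimensional Hilbert space one always has $0\in\sigma_{\mathrm{ess}}(A)$, so the inf-sup is bounded below by $0$; choosing $V$ to be the span of the eigenvectors attached to the first $\min\{k-1,m\}$ positive eigenvalues (with $m$ the total number of positive eigenvalues, counted with multiplicity) shows the inf-sup equals the $k$-th positive eigenvalue when $k\le m$ and equals $0$ when $k>m$, which is exactly the padded value. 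The alternative via $K_n^+$ is also fine — $A\mapsto A^+$ is norm-continuous on bounded self-adjoint operators (continuous functional calculus, or Kato's estimate for $A\mapsto|A|$), and $K_n^+$ and $K_n$ have the same padded sequence of non-negative eigenvalues — though note that the two operators do not literally have the same multiplicity of the eigenvalue $0$; this is harmless here precisely because zeros are treated as padding. One small bibliographic caveat: Kato \S IV.3.5 is the perturbation result the paper leans on, not a statement of Courant--Fischer, so if you wanted to anchor the min-max identity itself you would cite a different source (e.g.\ Reed--Simon, Vol.\ IV, Thm.\ XIII.1); your self-contained derivation sidesteps the issue.
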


Now, we are prepared to prove the main result about the spectrum of $T_B$ with compact and self-adjoint parameters $B$.

\begin{theorem} \label{SpectrumTB} 
Let $B$ be a compact and self-adjoint operator in $L^2(\Sigma; \mathbb{C}^2)$ and $T_B$ be defined by~\eqref{eq:def_TB}. Then the following statements hold:
\begin{enumerate}
\item[\textup{(i)}] $\sigma_{\textup{ess}}(T_B) = [0, +\infty)$.

\item[\textup{(ii)}] If $B$ has finite rank $k = \textup{dim}(\textup{Ran}(B))$, then $T_B$ has at most $k$ discrete eigenvalues.

\item[\textup{(iii)}]  Let $B : L^2(\Sigma; \C^2) \to L^2(\Sigma; \C^2)$ be given by  \eqref{Decomposition_B} with  $B_{12} = B_{22} = 0$. Then $T_B$ has at most finitely many discrete eigenvalues and there holds 
\begin{equation} \label{Domain_B1}
\Dom(T_B) = \left\{ f \in H^{3/2}_{\Delta}(\R^2 \setminus \Sigma) \cap H^1(\R^2) \: \big| \: \gamma_N^{+} f_{+} - \gamma_N^{-} f_{-} = - B_{11} \gamma_D f \right\}.
\end{equation}

\item[\textup{(iv)}] Assume that $\Sigma$ is a $C^2$-boundary and let $B$ be non-negative and be given by \eqref{Decomposition_B}.  Furthermore, assume that $B_{22}$ has infinitely many positive eigenvalues. Then, $\sigma_{\textup{disc}}(T_B)$ is infinite and unbounded from below.
\end{enumerate}
\end{theorem}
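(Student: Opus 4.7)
The essential spectrum is already identified in~(i), so the task reduces to exhibiting infinitely many discrete eigenvalues of $T_B$ accumulating at $-\infty$. The plan is to combine the Birman--Schwinger principle from Theorem~\ref{GBT_TB_SA}~(i) with a careful asymptotic analysis of the Weyl function $M(w)$ as $w \to -\infty$. Since $B \geq 0$ is compact, its non-negative square root $B^{1/2}$ is compact as well, and Theorem~\ref{GBT_TB_SA}~(i) applied with the factorization $B = B^{1/2} \cdot B^{1/2}$ gives, for $w<0$, the equivalence $w \in \sigma_{\textup{p}}(T_B) \Leftrightarrow -1 \in \sigma_{\textup{p}}(A(w))$, where $A(w) := B^{1/2} M(w) B^{1/2}$ is compact and self-adjoint (as $M(w)$ is self-adjoint for real $w$ by Proposition~\ref{Properties_Gamma_Weyl}~(iii)). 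The identity $M(z) - M(w)^{\ast} = (z - \overline{w}) \gamma(w)^{\ast} \gamma(z)$ from the same proposition, specialized to real $z,w$, shows that $w \mapsto M(w)$ and hence $w \mapsto A(w)$ are monotone non-decreasing on $(-\infty,0)$; consequently the eigenvalues $\mu_n(w)$ of $A(w)$, indexed from below, are continuous and monotone non-decreasing functions of $w$.

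The core step will be to establish the norm convergence
\[
\mu^{-1} A(-\mu^2) \longrightarrow -\tfrac{1}{8}\, B^{1/2} P_2 B^{1/2} \qquad \text{as } \mu \to +\infty, \qquad P_2 := \diag(0, I_{L^2(\Sigma)}).
\]
Using the block form of $M(w)$ in Proposition~\ref{IsQBT}~(iii), the individual asymptotics read off: the $(1,1)$ entry $S(w)$ is $O(\mu^{-1})$ in norm by Proposition~\ref{lem_convergence_S}; the off-diagonals $W(w)$ and $W(\overline{w})^{\ast}$ are $o(\mu^{\tau})$ for every $\tau>0$ by Proposition~\ref{lem_boundedness_T}; only the $(2,2)$ entry $\tfrac{w}{4} S(w) = -\tfrac{\mu}{4}\bigl(\mu S(w)\bigr)$ contributes non-trivially. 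After sandwiching with the compact operator $B^{1/2}$, Corollary~\ref{cor_SK_convergence}, which requires the $C^{2}$-regularity of $\Sigma$, upgrades the strong convergence $\mu S(w) \to \tfrac{1}{2} I$ to norm convergence, yielding the displayed asymptotic. The limit $B^{1/2} P_2 B^{1/2}$ is compact and non-negative, and its non-zero spectrum coincides with that of $P_2 B P_2$ (by the standard identity $\sigma(XY) \setminus \{0\} = \sigma(YX) \setminus \{0\}$); restricted to $\Ran(P_2) \simeq L^2(\Sigma)$, the latter is exactly $B_{22}$, which by hypothesis possesses infinitely many positive eigenvalues $\lambda_1 \geq \lambda_2 \geq \cdots > 0$.

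Given these ingredients, Lemma~\ref{lem_convergence_ev} applied to $K_n := -\mu_n^{-1} A(-\mu_n^2)$ (for any sequence $\mu_n \to \infty$) yields that its $k$-th largest non-negative eigenvalue converges to $\lambda_k / 8 > 0$, hence $\mu_k(-\mu^2) \leq -\mu \lambda_k / 16$ for $\mu$ sufficiently large. Two consequences will be exploited: (a)~for each fixed $k$, $\mu_k(w) \to -\infty$ as $w \to -\infty$; and (b)~for any fixed $w_{*} < 0$, compactness of $A(w_{*})$ forces $\mu_n(w_{*}) > -1$ for all $n$ exceeding some threshold $n_0(w_{*})$. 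Using the monotonicity and continuity of $\mu_n(\cdot)$, together with (a) and (b), the intermediate value theorem will produce, for each $n > n_0(w_{*})$, a unique $w_n^{*} \in (-\infty, w_{*})$ with $\mu_n(w_n^{*}) = -1$; by Birman--Schwinger, every such $w_n^{*}$ lies in $\sigma_{\textup{p}}(T_B)$, and because $w_n^{*} < 0 \leq \inf \sigma_{\textup{ess}}(T_B)$, in fact $w_n^{*} \in \sigma_{\textup{disc}}(T_B)$. Since $w_{*}$ may be taken arbitrarily negative while still having $n_0(w_{*})$ finite, one obtains $w_n^{*} \to -\infty$ as $n \to \infty$, producing infinitely many distinct discrete eigenvalues accumulating at $-\infty$, as required. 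I expect the main obstacle to be the asymptotic step: the three blocks of $M(w)$ scale at very different rates, and the passage from strong to norm convergence --- which hinges on the compactness of $B^{1/2}$ and on Corollary~\ref{cor_SK_convergence} (hence on $\Sigma$ being $C^{2}$) --- is precisely what makes the eigenvalue-tracking argument go through.
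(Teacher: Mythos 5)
Your proposal treats only part~(iv) of the theorem. Parts~(i) (Weyl's theorem applied to the compact resolvent difference from Krein's formula), (ii) (the resolvent difference has rank~$\leq k$, so Birman--Solomjak gives at most $k$ discrete eigenvalues), and~(iii) (identifying $T_B$ with a Schr\"odinger operator with a $\delta$-interaction of compact weight and comparing its form to the one of a $\delta$-interaction of strength $-\|B_{11}\|$, which has finite discrete spectrum by \cite{BLL12}) are not addressed at all, yet they are part of the statement. Opening with ``The essential spectrum is already identified in~(i)'' treats an unproved claim as given, and~(ii) and~(iii) are silently dropped. A complete proof must cover these; none of them is a triviality (the domain identity~\eqref{Domain_B1} in~(iii), for instance, requires the tangential derivative identity that converts the Wirtinger jump into a Neumann jump).

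Within part~(iv) your argument is essentially the paper's: factorize $B=B^{1/2}\cdot B^{1/2}$, apply Theorem~\ref{GBT_TB_SA}~(i), compute the norm limit of the properly rescaled Birman--Schwinger operator using Corollary~\ref{cor_SK_convergence} for the $(2,2)$ block and Proposition~\ref{lem_boundedness_T} for the off-diagonal blocks, identify the limit's nonzero spectrum with that of $\tfrac18 B_{22}$ via $\sigma(CD)\setminus\{0\}=\sigma(DC)\setminus\{0\}$, invoke Lemma~\ref{lem_convergence_ev}, and track eigenvalue curves across the level $-1$ (equivalently, across $1/\sqrt{|w|}$ in the paper's normalization) by the intermediate value theorem. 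The one genuine departure is your use of \emph{monotonicity} of $w\mapsto M(w)$ (and hence of $w\mapsto A(w)$ and its ordered eigenvalues), which you derive from Proposition~\ref{Properties_Gamma_Weyl}~(iii). This is correct --- the reasoning is that $M'(w)=\gamma(w)^*\gamma(w)\geq 0$ for real $w$, not quite the literal ``specialization to real $z,w$'' you state, since $\gamma(w)^*\gamma(z)$ is not self-adjoint for $z\neq w$ --- but it is also unnecessary: the paper concludes $w_k\to-\infty$ simply from the fact that $\sigma_{\textup{disc}}(T_B)$ cannot accumulate in $(-\infty,-1]$, which avoids both the monotonicity claim and the slightly delicate uniform-in-$w_*$ bookkeeping at the end of your argument. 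Your route is valid, just longer; neither approach buys anything the other lacks.
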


\begin{proof}
	To show item~(i), we note first that  Corollary~\ref{corollary_TBSelfadjoint}~(ii) and the compactness of $B$ imply that
	\begin{equation} \label{krein_in_proof}
		( T_B-w )^{-1} - (-\Delta - w )^{-1} = - \gamma(w) ( I + B M(w) )^{-1} B \gamma(\overline{w})^{\ast}
	\end{equation}
	is compact in $ L^2(\mathbb{R}^2)$ for any $w \in \mathbb{C} \setminus \mathbb{R}$. Hence, Weyls Theorem \cite[Thm.~XIII.14]{RS77} yields 
	\begin{equation*}
		\sigma_{\text{ess}}(T_B) = \sigma_{\text{ess}}(-\Delta) = \sigma(-\Delta) = [0, +\infty).
	\end{equation*}

	To prove the assertion in~(ii), note that the operator in~\eqref{krein_in_proof}
	is a finite rank operator with rank at most $k$.
	Together with \cite[Chap.~9.3, Thm.~3]{BS87} this yields that $\sigma_{\text{disc}}(T_B)$ contains at most $k$ eigenvalues taking multiplicities into account.

Next, we prove item (iii). We first verify that~\eqref{Domain_B1} is true. Recall that by Corollary~\ref{corollary_TBSelfadjoint}~(ii) for any $w \in \mathbb{C} \setminus \mathbb{R}$ the operator  
\begin{equation*}
   I + B M(w) = \begin{pmatrix} I + B_{11} S(w) & B_{11} W(w) \\ 0 & I \end{pmatrix}
\end{equation*}
is bijective in $L^2(\Sigma; \mathbb{C}^2)$. In particular,  $I + B_{11} S(w)$ is boundedly invertible in $L^2(\Sigma; \mathbb{C})$. A simple computation shows that
\begin{equation*}
   \big(I + B M(w)\big)^{-1} = \begin{pmatrix} (I + B_{11} S(w))^{-1} & - (I + B_{11} S(w))^{-1}  B_{11} W(w) \\ 0 & I \end{pmatrix}.
\end{equation*}
Taking again $B_{12}=B_{22}=0$ and the last displayed formula into account, we find that Krein's resolvent formula in~\eqref{krein_in_proof} can be simplified to
\begin{equation*}
  ( T_B - w  )^{-1} = ( - \Delta - w )^{-1} - SL(w) \big( I + B_{11} S(w) \big)^{-1} B_{11} \gamma_D ( - \Delta - w )^{-1}.
\end{equation*}
With the mapping properties of $SL(w)$ from  Lemma~\ref{Properties_SL} (i)  we find that 
\begin{equation} \label{delta_domain_inclusion}
  \Dom (T_B) \subset H^{3/2}_{\Delta}(\R^2 \setminus \Sigma) \cap H^1(\R^2).
\end{equation}
Moreover, by definition, a function $f \in H^{3/2}_{\Delta}(\R^2 \setminus \Sigma) \cap H^1(\R^2)$ belongs to $\Dom(T_B)$ if and only if $\Gamma_0 f + B \Gamma_1 f=0$, which is, due to  $B_{12}=B_{22}=0$, equivalent to
\begin{equation} \label{transmission_delta1}
	2 \bar{\bn} ( \gamma_D^{+} \dzbar f_{+} - \gamma_D^{-} \dzbar f_{-} ) = -B_{11} \gamma_D f.
\end{equation}
To proceed, note that  there exists a linear and bounded tangential derivative operator $\partial_t : H^1(\Sigma) \to L^2(\Sigma)$ which acts on functions $g \in C_0^{\infty}(\overline{\Omega_{\pm}})$ as 
	\begin{equation*}
		\partial_t \gamma_D^{\pm} g = n_1 \gamma_D^{\pm} \partial_2 g - n_2 \gamma_D^{\pm} \partial_1 g,
	\end{equation*}
see, e.g., \cite[Lem. A.1]{GM08}.
	Next, let $\zeta$ be an arc-length parametrization of $\Sigma$ such that $n \circ \zeta = (\dot{\zeta_2}, -\dot{\zeta_1})$ and $t \circ \zeta = (\dot{\zeta_1}, \dot{\zeta_2})$ is the unit tangent vector.
	Then, by a direct calculation one finds for any $g \in C_0^{\infty}(\overline{\Omega_{\pm}})$ 
	\begin{equation*}
		\begin{split}
			2 (\bar{\bn} \gamma_D^{\pm} \dzbar g) \circ \zeta &= \dot{\zeta_2} \gamma_D^{\pm} (\partial_1 g) \circ \zeta - \dot{\zeta_1} \gamma_D^{\pm} (\partial_2 g)\circ \zeta \\
			&\qquad \qquad +i \dot{\zeta_1} \gamma_D^{\pm} (\partial_1 g)\circ \zeta +i \dot{\zeta_2} \gamma_D^{\pm} (\partial_2 g)\circ \zeta \\
			&= (\gamma_N^\pm g + i \partial_t \gamma_D^\pm g) \circ \zeta.
		\end{split}
	\end{equation*} 
	By density, this identity extends to all functions $g \in H^{3/2}_{\Delta}(\Omega_{\pm})$. In particular, for any $f \in H^{3/2}_{\Delta}(\R^2 \setminus \Sigma) \cap H^1(\R^2)$ there holds $\gamma_D^{+} f_{+} = \gamma_D^{-} f_{-}\in H^1(\Sigma)$, and so 
	\begin{equation*}
		2 \bar{\bn} ( \gamma_D^{+} \dzbar f_{+} - \gamma_D^{-} \dzbar f_{-} ) = \gamma_N^{+} f_{+} - \gamma_N^{-} f_{-}.
\end{equation*} 
Using this in~\eqref{transmission_delta1}, we find that the condition $\Gamma_0 f + B \Gamma_1 f=0$ is equivalent to
\begin{equation*}
	\gamma_N^{+} f_{+} - \gamma_N^{-} f_{-}  = - B_{11} \gamma_D f,
\end{equation*}
which in combination with ~\eqref{delta_domain_inclusion}  implies that~\eqref{Domain_B1} is true.

Next, we prove that the discrete spectrum of $T_B$ is finite. For this, we consider the auxiliary quadratic form 
\begin{equation*}
 \widetilde{t}_B[f] := \| \nabla f \|_{L^2(\R^2)}^2 - \| B_{11} \|  \|\gamma_D f \|_{L^2(\Sigma)}^2, \quad \Dom(\widetilde{t}_B) := H^1(\R^2).
\end{equation*}
By \cite[Prop.~3.1]{BEL13} the form $\widetilde{t}_B$ is closed and bounded from below, and via the first representation theorem \cite[Thm.~VI.2.1]{kato} it is associated with a Schr\"odinger operator $\widetilde{T}_B$ with a $\delta$-interaction supported on $\Sigma$ of strength $-\|B_{11}\|$ that is formally given  by $-\Delta - \| B_{11} \| \delta_\Sigma$. By \cite[Thm.~3.14]{BLL12} the discrete spectrum of $\widetilde{T}_B$ is finite.
To proceed, define the sesquilinear form $t_B$ by 
\begin{equation*}
t_B[f] :=  \| \nabla f \|_{L^2(\R^2)}^2 + \langle B_{11} \gamma_D f , \gamma_D f \rangle_{\Sigma}, \quad \Dom(t_B) := H^1(\R^2).
\end{equation*}
As for $\widetilde{t}_B$ one shows that $t_B$ is closed and bounded from below. 
Moreover, using integration by parts and the transmission condition in \eqref{Domain_B1}, we see that 
$$\langle T_B f, g\rangle_{L^2(\R^2)}=t_B(f, g)\qquad \forall f\in\Dom(T_B),\, \forall g\in\Dom(t_B).$$ Since $\Dom(T_B)\subset\Dom(t_B)$ and $T_B$ is self-adjoint, this implies that $t_B$ is the quadratic form associated with $T_B$ via the first representation theorem.
Furthermore, $t_B \geq \widetilde{t}_B$ in the sense of quadratic forms and $\sigma_{\textup{ess}}(\widetilde{T}_B)=\sigma_{\textup{ess}}(T_B)=[0,+\infty)$. Since $\sigma_\textup{disc}(\widetilde{T}_B)$ is finite, this and the min-max principle imply that also $\sigma_\textup{disc}(T_B)$ has to be finite. This finishes the proof of item~(iii).

	It remains to prove item~(iv). Let $w <0$ be fixed. As $B$ is non-negative, we can apply Theorem~\ref{TBSelfadjoint}~(i) with $B_1=B_2=B^{1/2}$ and get that
	\begin{equation} \label{EquivalenceBSPrinciple}
		\begin{split}
			w \in \sigma_{\textup{p}}(T_B) \quad 
			&\Longleftrightarrow \quad 0 \in \sigma_{\textup{p}} \left(\frac{1}{\sqrt{|w|}} I + \frac{1}{\sqrt{|w|}} B^{1/2} M(w) B^{1/2} \right).
		\end{split}
	\end{equation}
	Hence, it is sufficient to study  for $w <0$ the positive eigenvalues of the compact and self-adjoint operator
	\begin{equation*}
		A(w) :=- \frac{1}{\sqrt{|w|}} B^{1/2} M(w) B^{1/2}  = - \frac{1}{\sqrt{|w|}} B^{1/2} \begin{pmatrix}
			S(w) & W(w) \\
			W(\overline{w})^{\ast} & \frac{w}{4} S(w)
		\end{pmatrix}  B^{1/2},
	\end{equation*}
	where $S(w)$ and $W(w)$ are defined by~\eqref{def_single_layer_boundary_integral_operator} and~\eqref{def_L}, respectively.
	 
	For $w<0$, denote by $(\mu_k(w))_{k=1}^{N(w)},\, N(w)\leq+\infty,$ the decreasing sequence of non-negative eigenvalues of $A(w)$, taking their multiplicities into account. If the sequence is finite, we will always extend it by zeros.  Since $\mathbb{C} \setminus [0, +\infty) \ni w \mapsto M(w)$ is a holomorphic operator valued function due to Proposition \ref{Properties_Gamma_Weyl}, Lemma~\ref{lem_convergence_ev} implies that the mapping  $w\mapsto\mu_k(w)$ is continuous.
	Furthermore, since $B^{1/2}$ is compact, we get with Corollary~\ref{cor_SK_convergence} and Proposition~\ref{lem_boundedness_T} that 
	\begin{equation*}
		\lim_{w \rightarrow -\infty} A(w) = \frac{1}{8} B^{1/2}\begin{pmatrix} 0 & 0 \\ 0 & I \end{pmatrix} B^{1/2} =: A(-\infty)
	\end{equation*}
	in the operator norm, where $I$ denotes the identity operator in $L^2(\Sigma)$. Note that the well-known identity $\sigma(CD) \setminus \{ 0 \} = \sigma(DC) \setminus \{ 0 \}$ for two bounded operators $C, D$ in a Hilbert space (see, e.g., \cite[Prop.~2.1.8]{P94}) implies that the positive eigenvalues of $A(-\infty)$ are exactly the positive eigenvalues of $\frac{1}{8}B_{22}$, that will be denoted by $\lambda_k$. Then, we conclude with Lemma~\ref{lem_convergence_ev} that
	\begin{equation*} 
		\lim_{w \rightarrow -\infty} \mu_k(w) = \lambda_k > 0.
	\end{equation*}
	Moreover, as $A(-1)$ is compact, there exists $k_0 \in \mathbb{N}$ such that $\mu_k(-1) <1 $ for all $k > k_0$. Hence, for every $k > k_0$ the function $f_k: (-\infty,-1] \to \R$ defined by 
	\begin{equation*}
		f_k(w) = \frac{1}{\sqrt{|w|}} - \mu_k(w)
	\end{equation*}
	satisfies $f_k(-1) > 0$  and converges for $w \to -\infty$ to the negative number $-\lambda_k$.
	In particular, because of the continuity of $\mu_k(w)$, there exists a  $w_k \in (-\infty,-1]$ with $f_k(w_k) = 0$ for every $k>k_0$, implying $w_k \in \sigma_{\textup{p}}(T_B)$ due to \eqref{EquivalenceBSPrinciple}. Since $\sigma_{\text{ess}}(T_B) = [0, +\infty)$ by~(i), we conclude with Theorem~\ref{TBSelfadjoint}~(i) that the points $w_k$ can not have an accumulation point in $(-\infty, -1]$ and thus, the  sequence   $(w_k)_{k \in \mathbb{N}}$ has to tend to $-\infty$ for $k \rightarrow \infty$. This finishes the proof of the theorem.
\end{proof}

\section{The Dirac operator with a non-local singular potential and its non-relativistic limit} \label{section_nrl}

In this section, we will consider the non-relativistic limit of the relativistic model of  non-local $\delta$-shell interactions, which was discussed in the recent paper \cite{HT23}. It turns out that this non-relativistic limit is of the form $T_B$ as in~\eqref{eq:def_TB} for a special parameter $B$.

To introduce the mentioned Dirac operators with non-local singular interactions, some notations have to be fixed first. Let
\begin{equation*}
\sigma_1 = \begin{pmatrix} 
0 & 1\\
1 & 0
\end{pmatrix}, \quad \sigma_2 = \begin{pmatrix} 
0 & -i\\
i & 0
\end{pmatrix} \quad \text{and} \quad \sigma_3 = \begin{pmatrix} 
1 & 0\\
0 & -1
\end{pmatrix},
\end{equation*}
be the Pauli spin matrices. We will use for $x=(x_1,x_2) \in \mathbb{C}^2$ the notation
\begin{equation*}
	\sigma \cdot x := \sigma_1 x_1 + \sigma_2 x_2 \quad \text{and} \quad \sigma \cdot \nabla  := \sigma_1 \partial_1 + \sigma_2 \partial_2.
\end{equation*}
We will make use of the function spaces 
	\begin{equation*}
		\begin{split}
			H^{1/2}_\sigma(\Omega_\pm) &=\{ f \in H^{1/2}(\Omega_\pm; \mathbb{C}^2)\mid (\sigma\cdot\nabla)f \in L^2(\Omega_\pm; \mathbb{C}^2)\} \\
			&= \left\{ f = \begin{pmatrix} f_1 \\ f_2 \end{pmatrix} \in L^2(\Omega_\pm; \mathbb{C}^2) \,\Big|\, f_1 \in H_{\partial_{\overline{z}}}^{1/2}(\Omega_\pm), f_2 \in H_{\partial_{z}}^{1/2}(\Omega_\pm) \right\},
		\end{split}
\end{equation*}
where the spaces $H_{\partial_{z}}^{1/2}(\Omega_\pm)$ and $H_{\partial_{\bar z}}^{1/2}(\Omega_\pm)$ are defined in \eqref{eq_Hdz} and \eqref{eq_Hdz_bar}, respectively. Taking Lemma \ref{Trace_Extension} into account, we see that $f \in H^{1/2}_\sigma(\Omega_\pm)$ possesses a Dirichlet trace $\gamma_D^\pm f \in L^2(\Sigma; \mathbb{C}^2)$.

Next, given $F,\, G \in L^2(\Sigma; \mathbb{C}^{2 \times 2})$, define the operator 
	\begin{equation} \label{DefNonLocalDirac}
		\begin{split}
			D_{F,G}^c f &= \left(-ic(\sigma \cdot \nabla) +\sigma_3 \frac{c^2}{2} \right) f_+ \oplus \left(-ic(\sigma \cdot \nabla) +\sigma_3 \frac{c^2}{2} \right) f_-, \\
			\Dom(D_{F,G}^c) &= \bigg\{f \in H^{1/2}_\sigma(\Omega_+)\oplus H^{1/2}_\sigma(\Omega_-) \: \big| \\
			&\quad \qquad -i(\sigma \cdot n)(\gamma_D^+ f_+ - \gamma_D^- f_-)=\frac{1}{2}F\int_\Sigma G^\ast (\gamma_D^+f_+ + \gamma_D^-f_-)\dd \sigma \bigg\}.
		\end{split}
\end{equation}
Comparing this operator to \cite[Def.~3.0.1]{HT23}, it turns out that $D_{F,G}^c$ is the rigorous mathematical operator in $L^2(\mathbb{R}^2; \mathbb{C}^2)$ associated with the formal expression 
\begin{equation*}
	\mathcal{D}_{F,G}^c=-ic\sigma \cdot \nabla + \sigma_3 \frac{c^2}{2} + c|F\delta_\Sigma\rangle\langle G\delta_\Sigma|.
\end{equation*}
More concretely, $D_{F,G}^c= c D_{F,G}$, where $D_{F,G}$ is defined in \cite[Def.~3.0.1]{HT23} with the $c$-dependent mass term $m=\frac{c}{2}$ and dimension $n=2$. Note that for $F = G = 0$ the operator $D_{0,0}^c=:D_0^c$ coincides with the free Dirac operator  
	\begin{equation*}
		D_0^c f = \left(-ic(\sigma \cdot \nabla) +\sigma_3 \frac{c^2}{2} \right) f, \quad \Dom(D_0^c) = H^1(\mathbb{R}^2; \mathbb{C}^2).
\end{equation*}

Now, we will discuss  basic properties of $D_{F,G}^c$ and, in particular, a useful resolvent formula. For this, some further objects have to be introduced. Recall that for $\widetilde{w} \in \mathbb{C} \setminus [0, +\infty)$ the operators $SL(\widetilde{w}),\, WL(\widetilde{w}),\, \widetilde{WL}(\widetilde{w}): L^2(\Sigma) \rightarrow L^2(\mathbb{R}^2)$ are defined by~\eqref{def_single_layer_potential}, \eqref{def_Psi}, and~\eqref{def_Xi}, respectively. Then, we define for $w \in \mathbb{C} \setminus ((-\infty, -\frac{c^2}{2}] \cup [\frac{c^2}{2}, +\infty))$ the operator $\Phi^c(w): L^2(\Sigma; \mathbb{C}^2) \rightarrow L^2(\mathbb{R}^2; \mathbb{C}^2)$ by
\begin{equation} \label{def_Phi_w}
  \Phi^c(w) := \begin{pmatrix} (\tfrac{w}{c} + \tfrac{c}{2}) SL(\tfrac{w^2}{c^2} - \tfrac{c^2}{4}) & -2i WL(\tfrac{w^2}{c^2} - \tfrac{c^2}{4}) \\ -2i \widetilde{WL}(\tfrac{w^2}{c^2} - \tfrac{c^2}{4}) & (\tfrac{w}{c} - \tfrac{c}{2}) SL(\tfrac{w^2}{c^2} - \tfrac{c^2}{4}) \end{pmatrix}.
\end{equation}
Furthermore, recall that for $\widetilde{w} \in \mathbb{C} \setminus [0, +\infty)$ the operators $S(\widetilde{w}), W(\widetilde{w}): L^2(\Sigma) \rightarrow L^2(\Sigma)$ are defined by~\eqref{def_single_layer_boundary_integral_operator} and~\eqref{def_L}, respectively. Then, we define for $w \in \mathbb{C} \setminus ((-\infty, -\frac{c^2}{2}] \cup [\frac{c^2}{2}, +\infty))$ the operator $\mathcal{C}^c(w): L^2(\Sigma; \mathbb{C}^2) \rightarrow L^2(\Sigma; \mathbb{C}^2)$ by
\begin{equation} \label{def_C_w}
  \mathcal{C}^c(w) := \begin{pmatrix} (\tfrac{w}{c} + \tfrac{c}{2}) S(\tfrac{w^2}{c^2} - \tfrac{c^2}{4}) & -2i W(\tfrac{w^2}{c^2} - \tfrac{c^2}{4}) \\ 2i W(\tfrac{\overline{w}^2}{c^2} - \tfrac{c^2}{4})^* & (\tfrac{w}{c} - \tfrac{c}{2}) S(\tfrac{w^2}{c^2} - \tfrac{c^2}{4}) \end{pmatrix}.
\end{equation}
Finally, for $A\in  L^2(\Sigma; \mathbb{C}^{2\times 2})$, we define a bounded operator $\Pi_A:\, L^2(\Sigma; \mathbb{C}^{2})\to \mathbb{C}^{2}$ as
\begin{equation*}
\Pi_A f:=\int_\Sigma (A^\ast f) \dd\sigma.
\end{equation*}
Its adjoint $\Pi_A^\ast:\, \mathbb{C}^{2}\to L^2(\Sigma; \mathbb{C}^{2})$ acts as multiplication by the matrix $A$. Therefore, the rank of $\Pi_A^\ast$ equals to the number of linearly independent columns of $A$ in $L^2(\Sigma; \mathbb{C}^{2})$. Note that with a constant $2\times 2$ matrix $C$ one gets $\Pi_{CA}=\Pi_A C^\ast$  and $\Pi_{CA}^\ast=C\Pi_A^\ast$. Furthermore, comparing to the \emph{bra-ket} notation, we obtain $|F\rangle\langle G|=\Pi_F^\ast\Pi_G$.

The following results are basically modifications of similar observations from \cite{HT23}.

 \begin{proposition} \label{proposition_Dirac_nonlocal}
  If $\Pi_F^\ast\Pi_G=\Pi_G^\ast\Pi_F$, then the operator $D_{F,G}^c$ defined by~\eqref{DefNonLocalDirac} is self-adjoint in $L^2(\mathbb{R}^2; \mathbb{C}^2)$ and the following is true:
  \begin{itemize}
    \item[\textup{(i)}] $\sigma_{\textup{ess}}(D_{F,G}^c) = (-\infty, -\frac{c^2}{2}] \cup [\frac{c^2}{2}, +\infty)$.
    \item[\textup{(ii)}] $\sigma_{\textup{disc}}(D_{F,G}^c)$ consists of at most two eigenvalues taking multiplicities into account.
    \item[\textup{(iii)}] For $ w \in \rho(D_{F,G}^c)$, the matrix $(I + \Pi_G  \mathcal{C}^c(w) \Pi_F^\ast)$ is invertible and 
    \begin{equation*}
      \begin{split}
        \big( D_{F,G}^c &- w \big)^{-1}  = \big( D_0^c - w \big)^{-1}  \\
        &- \frac{1}{\sqrt{c}}\Phi^c(w) \Pi_F^\ast \big(I + \Pi_G  \mathcal{C}^c(w) \Pi_F^\ast \big)^{-1} \frac{1}{\sqrt{c}}\Pi_G \Phi^c(\overline{w})^\ast 
      \end{split}
    \end{equation*}
    holds on $L^2(\mathbb{R}^2; \mathbb{C}^2)$.
  \end{itemize}
\end{proposition}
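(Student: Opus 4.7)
The plan is to realize $D_{F,G}^c$ as the restriction of a maximal Dirac operator to the kernel of $\Gamma_0^c + B\Gamma_1^c$ for a suitable generalized boundary triple $\{L^2(\Sigma;\C^2),\Gamma_0^c,\Gamma_1^c\}$, and then to invoke the abstract framework of Theorem~\ref{GBT_TB_SA} with the finite-dimensional intermediate space $\mathcal{K}=\C^2$. Concretely, I would start from (the rescaled version of) the triple constructed in \cite{HT23}, in which $\Gamma_0^c$ encodes the jump $-i(\sigma\cdot n)(\gamma_D^+ f_+ - \gamma_D^- f_-)$ and $\Gamma_1^c$ encodes the average $\tfrac{1}{2}(\gamma_D^+ f_+ + \gamma_D^- f_-)$, with normalisations chosen so that the transmission condition in~\eqref{DefNonLocalDirac} becomes $\Gamma_0^c f + \Pi_F^\ast\Pi_G\,\Gamma_1^c f = 0$. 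The hermiticity hypothesis $\Pi_F^\ast\Pi_G=\Pi_G^\ast\Pi_F$ is exactly the self-adjointness of the boundary parameter $B:=\Pi_F^\ast\Pi_G$ in $L^2(\Sigma;\C^2)$, while its factorization through $\C^2$ via $B_1=\Pi_F^\ast:\C^2\to L^2(\Sigma;\C^2)$ and $B_2=\Pi_G:L^2(\Sigma;\C^2)\to\C^2$ makes $B$ a self-adjoint operator of rank at most two, in particular compact.

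The second and most technical step is the identification of the $\gamma$-field and Weyl function of this triple with $\Phi^c(w)$ and $\mathcal{C}^c(w)$ on $\rho(D_0^c)=\C\setminus((-\infty,-c^2/2]\cup[c^2/2,+\infty))$. Using the fact that the Dirac resolvent kernel decomposes into Helmholtz single-layer-type kernels with spectral parameter $w^2/c^2-c^2/4$, one verifies that the action of $\gamma(w)$ on $(\varphi_1,\varphi_2)^T\in L^2(\Sigma;\C^2)$ is a matrix combination of $SL$, $WL$ and $\widetilde{WL}$ evaluated at $w^2/c^2-c^2/4$, matching~\eqref{def_Phi_w}. The Weyl function is then obtained by applying $\Gamma_1^c$, which via the jump relations of Lemma~\ref{lemma_jump_relations} and the mapping properties in Lemma~\ref{Properties_SL} and Proposition~\ref{proposition_Psi} yields the matrix of boundary integral operators in~\eqref{def_C_w}. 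I expect this identification to be the main obstacle, since it requires adapting the computations of \cite{HT23} (carried out there under higher regularity of $\Sigma$) to the present bi-Lipschitz setting, where the $H^{1/2}$-trace framework of Lemma~\ref{Trace_Extension} plus the boundedness results of Section~\ref{section_int.oper.} must be used systematically.

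Once this triple is in place, Theorem~\ref{GBT_TB_SA} applied with $B_1=\Pi_F^\ast$, $B_2=\Pi_G$ (with the $c^{-1/2}$ rescaling absorbed into the definition of the boundary mappings so as to match the statement of~(iii)) immediately yields the self-adjointness of $D_{F,G}^c$, the invertibility of the $2\times 2$ matrix $I+\Pi_G\,\mathcal{C}^c(w)\,\Pi_F^\ast$ for every $w\in\rho(D_{F,G}^c)$, and the Krein-type resolvent formula in~(iii). Items~(i) and~(ii) then follow directly from~(iii): the resolvent difference $(D_{F,G}^c-w)^{-1}-(D_0^c-w)^{-1}$ has rank at most two, so Weyl's theorem \cite[Thm.~XIII.14]{RS77} combined with the classical identity $\sigma_{\textup{ess}}(D_0^c)=(-\infty,-c^2/2]\cup[c^2/2,+\infty)$ (obtained from the Fourier transform of the symbol of $D_0^c$) proves~(i), while the Birman--Schwinger counting argument of \cite[Chap.~9.3, Thm.~3]{BS87} used in the proof of Theorem~\ref{SpectrumTB}~(ii) bounds $|\sigma_{\textup{disc}}(D_{F,G}^c)|$ by two, giving~(ii).
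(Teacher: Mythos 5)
Your proposal follows the paper's approach: realize $D_{F,G}^c$ via the generalized boundary triple of \cite{BHSLS23} (also used in \cite{HT23}), factorize $B=\Pi_F^\ast\Pi_G$ through $\mathcal{K}=\C^2$, identify the $\gamma$-field and Weyl function with $\Phi^c$ and $\mathcal{C}^c$ by matching integral kernels, and invoke Theorem~\ref{GBT_TB_SA} with $B_1=\Pi_F^\ast$, $B_2=\Pi_G$. The only minor deviation is that the paper cites \cite[Thms.~3.0.1\&3.0.2]{HT23} directly for self-adjointness and items~(i),~(ii), whereas you re-derive~(i),~(ii) from the rank-two resolvent formula via Weyl's theorem and the Birman--Schwinger count; this is immaterial, since those cited theorems rest on the same arguments.
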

\begin{proof} 
	Using that $D_{F,G}^c= c D_{F,G}$, where $D_{F,G }$ is the operator studied in~\cite{HT23} with $m=\frac{c}{2}$, items~(i) and~(ii) follow from \cite[Thms.~3.0.1\&3.0.2]{HT23}. To get the claimed resolvent formula, we use the generalized boundary triple $\{L^2(\Sigma; \C^2), \Gamma_0^{(1/2)}, \Gamma_1^{(1/2)} \}$ introduced in \cite[Thm. 4.3]{BHSLS23} (for $s=\frac{1}{2}$ and $m=\frac{c}{2}$), which was also used in \cite{HT23} to define and study $D_{F,G}$. First, we note that by~\eqref{def_single_layer_potential},~\eqref{def_Psi}, and~\eqref{def_Xi} the map $\Phi^c(w)$ defined in~\eqref{def_Phi_w} is an integral operator with integral kernel
	\begin{equation*}
	  \begin{split}
    \frac{1}{2 \pi} \sqrt{\tfrac{w^2}{c^2} - \tfrac{c^2}{4}} K_1 &\left( - i \sqrt{\tfrac{w^2}{c^2} - \tfrac{c^2}{4}} | x-y_\Sigma |\right)  \frac{(\sigma \cdot (x-y_\Sigma))}{ | x-y_\Sigma | } \\
    &\qquad \qquad + \frac{1}{2 \pi} K_0 \left(- i \sqrt{\tfrac{w^2}{c^2} - \tfrac{c^2}{4}} | x-y_\Sigma |\right) \left( \frac{w}{c} I + \frac{c}{2} \sigma_3\right).
  \end{split}
	\end{equation*}
	Therefore, by \cite[Thm. 4.3~(ii)]{BHSLS23} the $\gamma$-field corresponding to the generalized boundary triple $\{L^2(\Sigma; \C^2), \Gamma_0^{(1/2)}, \Gamma_1^{(1/2)} \}$ coincides with  $\Phi^c(c\,\,\cdot\,)$. By a similar line of reasoning, due to \cite[Thm. 4.3~(iii)]{BHSLS23}, the Weyl function corresponding to the generalized boundary triple $\{L^2(\Sigma; \C^2), \Gamma_0^{(1/2)}, \Gamma_1^{(1/2)}\}$ coincides with  $\mathcal{C}^{c}(c\,\,\cdot\,)$ defined in \eqref{def_C_w}. Next, for any $f \in H^{1/2}_\sigma(\Omega_+)\oplus H^{1/2}_\sigma(\Omega_-)$ one has 
		\begin{equation*}
			f \in \Dom(D_{F,G}) \quad \Longleftrightarrow \quad \Gamma_0^{(1/2)} f +\Pi_F^\ast\Pi_G \Gamma_1^{(1/2)} f = 0;
		\end{equation*}
		cf. \cite[Def. 3.0.1]{HT23}. Therefore, this and Theorem~\ref{GBT_TB_SA}~(iii) applied with $B_1 = \Pi_F^*$ and $B_2 = \Pi_G$ imply 
		\begin{equation*}
			\begin{split}
				(D_{F,G}^c-w)^{-1}&=\frac{1}{c} \big(D_{F,G}-\frac{w}{c} \big)^{-1} \\
				&=\frac{1}{c} \big( D_{0,0} - \frac{w}{c} \big)^{-1}- \frac{1}{c}\Phi^c(w) \Pi_F^\ast \big(I + \Pi_G  \mathcal{C}^c(w) \Pi_F^\ast \big)^{-1} \Pi_G \Phi^c(\overline{w})^\ast. 
			\end{split}
		\end{equation*}
\end{proof}

In order to find the non-relativistic limit of $D_{F,G}^c$, one has to subtract the particle's rest energy $\frac{c^2}{2}$, which is a completely relativistic object, from the total energy and then compute the limit of the resolvent of the resulting operator $D_{F,G}^c - c^2/2$, as $c \rightarrow +\infty$. To get a non-trivial limit, we will in fact work with the operator $D_{F_c,G_c}^c $,
where
\begin{equation} \label{TransformationF}
F_c:=S_c F,\, G_c:=S_c G\quad\text{with}\quad S_c:=\begin{pmatrix}
\frac{1}{\sqrt{c}} & 0\\
0 & \sqrt{c}
\end{pmatrix},
\end{equation}
instead of $D_{F,G}^c$ itself, cf. \cite{HT22,BD94} for similar rescalings.
Define for  $w \in \mathbb{C} \setminus \mathbb{R}$ the numbers $\widetilde{w}_c :=w+\frac{c^2}{2}$ and  $w_c := w+\frac{w^2}{c^2}$. 
Then, taking Proposition~\ref{proposition_Dirac_nonlocal}~(iii) into account, we have to study the convergence of the operator
\begin{equation} \label{equation_krein_Dirac} 
  \begin{split}
        \big( D_{F_c,G_c}^c &- \widetilde{w}_c \big)^{-1} = \big( D_{0}^c - \widetilde{w}_c \big)^{-1} \\
        &- \frac{1}{\sqrt{c}}\Phi^c(\widetilde{w}_c)S_c \Pi_F^\ast \big(I + \Pi_G S_c\mathcal{C}^c(\widetilde{w}_c)S_c\Pi_F^\ast \big)^{-1} \frac{1}{\sqrt{c}}\Pi_G S_c\Phi^c(\overline{\widetilde{w}_c})^\ast
      \end{split}
\end{equation}
as $c \rightarrow \infty$. Note that 
\begin{equation} \label{eq.-left side of D.resolvent}
\begin{split}
&\frac{1}{\sqrt{c}}\Phi^c(\widetilde{w}_c) S_c  =\begin{pmatrix}
\left(1+\frac{w}{c^2}\right)  SL(w_c) & -2i  WL(w_c) \\ 
-\frac{2i}{c} \widetilde{WL}(w_c) & \frac{w}{c}  SL(w_c) \end{pmatrix} 
\end{split}
\end{equation}
and
\begin{equation} \label{def_N}
S_c\mathcal{C}^c(\widetilde{w}_c)S_c=\begin{pmatrix} (1+\tfrac{w}{c^2}) S(w_c) & -2i W(w_c) \\ 2i W(\overline{w_c})^* & w S(w_c) \end{pmatrix}=:N_c(w).
\end{equation}
We will find the limits of \eqref{eq.-left side of D.resolvent} and \eqref{def_N} in the respective uniform operator topologies as $c\to+\infty$. For this, we need the following preliminary lemma.

\begin{lemma}\label{lemma-estimate of K_j}
Let $w \in \mathbb{C} \setminus \mathbb{R}$, $w_c = w + w^2/c^2$, and $t_1, t_2, t_3 , t_4 : \mathbb{R}^2 \setminus \{0\} \to \mathbb{C}$ be defined by
\begin{equation*}
\begin{split}
t_1(x)&=\frac{1}{ c}\sqrt{w_c}K_1(-i\sqrt{w_c}|x|)\\ 
t_2(x)&=  \frac{1}{c^2}K_0(-i\sqrt{w_c}|x|),\\
t_3(x)&= K_0(-i\sqrt{w_c}|x|)-K_0(-i\sqrt{w}|x|),\\
t_4(x) &= \sqrt{w_c} K_1(-i\sqrt{w_c}|x|) -\sqrt{w} K_1(-i\sqrt{w}|x|).
\end{split}
\end{equation*}
Then there exist positive constants $\kappa_1,\kappa_2, \kappa_3, \kappa_4$, depending only on $w$, such that for all sufficiently large $c>0$ the estimates
\begin{equation*}
|t_1(x)|\leq \frac{\kappa_1}{c}(|x|^{-1}+1)\ee^{-\kappa_2 |x|}
\end{equation*}
and 
\begin{equation*}
|t_j(x)|\leq \frac{\kappa_1}{c^2}(|\ln|x||+1)\ee^{-\kappa_2 |x|} \leq \frac{\kappa_3}{c^2}(|x|^{-1/4}+1)\ee^{-\kappa_4 |x|}, \qquad j \in \{ 2,3,4 \},
\end{equation*}
hold on $\R^2\setminus\{0\}$.
\end{lemma}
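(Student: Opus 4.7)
The plan is to treat $t_1$ and $t_2$ directly by invoking the pointwise bounds on $K_0$ and $K_1$ in Lemma~\ref{lemma_Bessel_functions}~(iv)--(v), and to reduce the estimates for $t_3$ and $t_4$ to those for $t_1$ and $t_2$ via the fundamental theorem of calculus along the segment connecting $\sqrt{w}$ to $\sqrt{w_c}$. The essential preliminary observation is that since $w_c\to w$ as $c\to+\infty$ and our convention $\Im\sqrt{\,\cdot\,}>0$ makes the square root continuous on $\C\setminus [0,+\infty)$, for all sufficiently large $c>0$ the quantity $|\sqrt{w_c}|$ stays uniformly bounded while $\Im\sqrt{w_c}$ stays uniformly bounded below by a positive constant depending only on $w$; moreover the identity $\sqrt{w_c}-\sqrt{w}=(w^2/c^2)/(\sqrt{w_c}+\sqrt{w})$ yields $|\sqrt{w_c}-\sqrt{w}|\leq C/c^2$.

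For $t_1$, Lemma~\ref{lemma_Bessel_functions}~(v) with $j=0$ gives
\begin{equation*}
|\sqrt{w_c}\,K_1(-i\sqrt{w_c}|x|)|\leq C\bigl(|x|^{-1/2}+|x|^{-1}\bigr)\ee^{-\Im\sqrt{w_c}\,|x|},
\end{equation*}
and since $|x|^{-1/2}\leq|x|^{-1}+1$, dividing by $c$ produces the desired bound. For $t_2$, Lemma~\ref{lemma_Bessel_functions}~(iv) yields $|K_0(-i\sqrt{w_c}|x|)|\leq C(1+|\ln|x||)\ee^{-\Im\sqrt{w_c}\,|x|}$ after absorbing the bounded quantity $|\ln|\sqrt{w_c}||$ into the constant, and division by $c^2$ finishes the first inequality. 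The second (weaker) inequality for $j\in\{2,3,4\}$ then follows from the first by noting that $|\ln|x||\leq C|x|^{-1/4}$ near the origin, while at large $|x|$ the factor $|\ln|x||$ is absorbed into the exponential at the cost of a slightly smaller decay rate.

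For $t_3$ and $t_4$ I parametrize the segment $z(s):=(1-s)\sqrt{w}+s\sqrt{w_c}$, $s\in[0,1]$. Since $K_0'(t)=-K_1(t)$ by Lemma~\ref{lemma_Bessel_functions}~(ii), a direct chain-rule computation yields
\begin{equation*}
t_3(x)=i(\sqrt{w_c}-\sqrt{w})\,|x|\int_0^1 K_1(-iz(s)|x|)\,\dd s.
\end{equation*}
For $t_4$, starting from $tK_1'(t)=-tK_0(t)-K_1(t)$ I would first verify the identity $\frac{d}{dz}[z\,K_1(-iz|x|)]=iz|x|\,K_0(-iz|x|)$, after which integration along $z(s)$ gives
\begin{equation*}
t_4(x)=i(\sqrt{w_c}-\sqrt{w})\,|x|\int_0^1 z(s)\,K_0(-iz(s)|x|)\,\dd s.
\end{equation*}
For large $c$ the segment $z(s)$ stays in a small neighbourhood of $\sqrt{w}$, so that $|z(s)|$ is bounded above and $\Im z(s)$ bounded below uniformly in $s\in[0,1]$. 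Plugging the bounds of Lemma~\ref{lemma_Bessel_functions}~(iv)--(v) together with $|\sqrt{w_c}-\sqrt{w}|\leq C/c^2$ into the integral representations then produces $|t_3(x)|\leq\frac{C}{c^2}(|x|^{1/2}+1)\ee^{-\kappa|x|}$ and $|t_4(x)|\leq\frac{C}{c^2}|x|(1+|\ln|x||)\ee^{-\kappa|x|}$, both of which are controlled by $\frac{C}{c^2}(1+|\ln|x||)\ee^{-\kappa_2|x|}$ after splitting the cases $|x|\leq 1$ and $|x|>1$ and absorbing the polynomial prefactor on the latter set into the exponential.

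The only step in the plan that requires more than book-keeping is verifying the derivative identity used for $t_4$; this is a short direct computation from $K_1'(t)=-K_0(t)-K_1(t)/t$, which is equivalent to the recurrence in Lemma~\ref{lemma_Bessel_functions}~(ii).
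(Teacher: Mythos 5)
Your proof is correct, and it takes a genuinely different and arguably cleaner route than the paper's. The paper treats $t_1$ and $t_3$ by citing the proof of \cite[Lem.~3.1]{BHS23}, and for $t_4$ it invokes the power series expansion of $K_1$ from Lemma~\ref{lemma_Bessel_functions}~(iii) to observe that the singular $1/t$ contributions cancel in $t_4$, then estimates the remaining analytic/logarithmic part near the origin via holomorphy of $g_1,g_2,\ln$, and separately handles large $|x|$ with the fundamental theorem of calculus and the bound on $K_1'$. You instead give a self-contained, uniform fundamental-theorem-of-calculus argument along the segment $z(s)=(1-s)\sqrt{w}+s\sqrt{w_c}$ for both $t_3$ and $t_4$, resting on the two identities $\tfrac{d}{dz}K_0(-iz|x|)=i|x|K_1(-iz|x|)$ and $\tfrac{d}{dz}\bigl[zK_1(-iz|x|)\bigr]=iz|x|K_0(-iz|x|)$, the second of which I verified: it is equivalent to $tK_1'(t)=-tK_0(t)-K_1(t)$ from Lemma~\ref{lemma_Bessel_functions}~(ii). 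Combined with the bounds in Lemma~\ref{lemma_Bessel_functions}~(iv)--(v), the uniform lower bound on $\Im z(s)$ and upper bound on $|z(s)|$ along the segment for large $c$, and $|\sqrt{w_c}-\sqrt{w}|=|w^2/c^2|/|\sqrt{w_c}+\sqrt{w}|=\mathcal{O}(c^{-2})$, your integral representations produce the required bounds after the routine splitting into $|x|\leq 1$ and $|x|>1$. What your approach buys is a completely self-contained proof (no citation of \cite{BHS23}), a single mechanism covering $t_3$ and $t_4$ rather than a power-series cancellation argument, and a unified treatment of small and large $|x|$; what the paper's route buys is brevity by delegating the $t_1,t_3$ work to an existing reference.
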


\begin{proof}
First, we mention that the second estimate on $|t_j|$, $j \in \{ 2, 3, 4\}$, always follows from the first one, as soon as this one is established.
The estimates for the functions $t_1$ and $t_3$ follow from the proof of \cite[Lem.~3.1]{BHS23}, see equation~(3.8) and the consideration on the function denoted by $t_2$ in \cite{BHS23}. The estimate for $t_2$ can be deduced from Lemma~\ref{lemma_Bessel_functions}~(iv). Hence it remains to control $t_4$.  Using the expansion of $K_1$ from Lemma~\ref{lemma_Bessel_functions}~(iii), one gets
\begin{equation}  \label{DiffK1}
\begin{split}
\sqrt{w_c}&K_1(-i\sqrt{w_c}|x|)-\sqrt{w} K_1(-i\sqrt{w}|x|) \\
&= - i w_c |x| g_1(-w_c |x|^2) \ln(-i \sqrt{w_c} |x|) + i w |x| g_1(-w|x|^2)  \ln(-i \sqrt{w} |x|) \\
&\quad - i w_c|x| g_2(-w_c |x|^2) +i w|x| g_2(-w |x|^2).
\end{split}
\end{equation}
Due to the holomorphy of $g_1$ and $g_2$, there exists for any $R>0$ a constant $C>0$ that is independent of $c$ such that
\begin{equation*}
|x|\big|  g_j(-w_c |x|^2) -  g_j(-w |x|^2) \big| \leq |w-w_c| C = \frac{|w|^2 C}{c^2} \quad \forall x \in B(0,R)
\end{equation*}
is valid for sufficiently large $c>0$. Similarly, as we chose the complex square root such that $\text{Im}(\sqrt{w})>0$, the holomorphy of the logarithm on the set $\mathbb{C} \setminus (- \infty,0]$ yields $\forall x \in B(0,R)\setminus\{0\}$,
\begin{equation*}
\big| \ln(-i \sqrt{w_c} |x|)  - \ln(-i \sqrt{w} |x|) \big|=\big| \ln(-i \sqrt{w_c})  - \ln(-i \sqrt{w}) \big|\leq\frac{C}{c^2}. 
\end{equation*}
In combination with \eqref{DiffK1}, this yields 
\begin{equation*}
\big|\sqrt{w_c}K_1(-i\sqrt{w_c}|x|)-\sqrt{w} K_1(-i\sqrt{w}|x|)\big| \leq \frac{C}{c^2}(|\ln|x||+1) \quad \forall x \in B(0,R)\setminus\{0\}
\end{equation*}
for all sufficiently large $c>0$. To complete the proof, note that  Lemma~\ref{lemma_Bessel_functions}~(v) implies that there exist constants $C, \kappa > 0$ which are independent of $c$ such that for all sufficiently large $c>0$
\begin{equation} \label{DiffK1LargeOrder1}
\left| \sqrt{w_c} - \sqrt{w} \right| | K_1(-i\sqrt{w_c}|x|)| \leq \frac{C}{c^2} \ee^{-\kappa |x|} \quad \forall |x|>R.
\end{equation}
Moreover, by the fundamental theorem of calculus one has for all $|x| > R$ and all sufficiently large $c>0$
\begin{equation*}
| K_1(-i\sqrt{w_c}|x|) - K_1(-i\sqrt{w}|x|) | \leq \int_0^1 \left| \frac{\dd}{\dd t} K_1\left(-i\sqrt{w+ t \frac{w^2}{c^2}} |x| \right) \right| \dd t  \leq \frac{C}{c^2} \ee^{- \kappa |x|},
\end{equation*}
where Lemma~\ref{lemma_Bessel_functions}~(v) was used.
Combining this with \eqref{DiffK1LargeOrder1} leads to 
\begin{equation*}
\big|\sqrt{w_c}K_1(-i\sqrt{w_c}|x|)-\sqrt{w} K_1(-i\sqrt{w}|x|)\big| \leq \frac{C}{c^2} \ee^{- \kappa |x|} \quad \forall |x|>R,
\end{equation*}
which finishes the proof of the lemma.
\end{proof}

With the help of Lemma~\ref{lemma-estimate of K_j} we can study the convergence of the terms in~\eqref{eq.-left side of D.resolvent}. Recall that for $w \in \mathbb{C} \setminus \mathbb{R}$ the operators $SL(w)$, $WL(w)$ and $\Phi^c(w)$ are defined by \eqref{def_single_layer_potential}, \eqref{def_Psi} and \eqref{def_Phi_w},  respectively.

\begin{lemma}\label{lemma-parts of the resolvent}
Let $w\in\mathbb{C}\setminus\mathbb{R}$. Then there exists a constant $\kappa > 0$  depending only on $w$ and $\Sigma$ such that
\begin{equation} \label{convergence_Phi}
\left\lVert \frac{1}{\sqrt{c}}\Phi^c(\widetilde{w}_c) S_c - \begin{pmatrix}
SL(w) & -2i WL(w)\\
0 & 0
\end{pmatrix}\right\rVert \leq \frac{\kappa}{c},
\end{equation}
and
\begin{equation} \label{convergence_Phi_star} 
\left\lVert\frac{1}{\sqrt{c}}S_c\Phi^c(\overline{\widetilde{w}_c})^\ast -\begin{pmatrix}
SL(\overline{w})^\ast & 0\\
2i WL(\overline{w})^\ast & 0
\end{pmatrix} \right\rVert \leq \frac{\kappa}{c}.
\end{equation}
\end{lemma}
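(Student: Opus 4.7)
The plan is to prove \eqref{convergence_Phi} block-entry-wise from the explicit matrix representation~\eqref{eq.-left side of D.resolvent}, bounding each of the four entries of the difference in the operator norm from $L^2(\Sigma)$ to $L^2(\mathbb{R}^2)$, and then to deduce \eqref{convergence_Phi_star} by taking adjoints. The two lower entries carry an explicit factor $1/c$ in front of operators that are \emph{uniformly} (in $c$) bounded, so they are $O(1/c)$ for free. The two upper entries are of the form $(1+O(1/c^2))SL(w_c)-SL(w)$ and $-2i(WL(w_c)-WL(w))$, and the key point is to show both $SL(w_c)-SL(w)$ and $WL(w_c)-WL(w)$ are of order $c^{-2}$ in norm, using the pointwise kernel estimates from Lemma~\ref{lemma-estimate of K_j}.

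For the upper-left entry, I would write the difference as $\frac{w}{c^2}SL(w_c)+(SL(w_c)-SL(w))$. The first summand is controlled via Lemma~\ref{lemma_Bessel_functions}~(iv) and Proposition~\ref{prop.-pot.int.op.} applied with $\theta=1/2$, giving $\|SL(w_c)\|=O(1)$ uniformly in $c$. The second summand is an integral operator with kernel $\frac{1}{2\pi}t_3(x-y_\Sigma)$; Lemma~\ref{lemma-estimate of K_j} yields $|t_3(x)|\leq \frac{C}{c^2}(|x|^{-1/4}+1)\ee^{-\kappa|x|}$, and Proposition~\ref{prop.-pot.int.op.} (with $\theta=1/2$ and $\alpha(r)=\frac{C}{c^2}(r^{-1/4}+1)\ee^{-\kappa r}$, for which both $\alpha^{2\theta}$ on $(0,C_\zeta L/4)$ and $r\,\alpha^{2(1-\theta)}$ on $(0,+\infty)$ are integrable) yields the bound $O(1/c^2)$. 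For the upper-right entry, the integral kernel of $WL(w_c)-WL(w)$ equals $\tfrac{i}{4\pi}\tfrac{\overline{\mathbf{x}-\mathbf{y}_\Sigma}}{|x-y_\Sigma|}t_4(x-y_\Sigma)$, whose modulus is pointwise bounded by $\frac{1}{4\pi}|t_4|$; by the $t_4$-bound of Lemma~\ref{lemma-estimate of K_j} another application of Proposition~\ref{prop.-pot.int.op.} gives $\|WL(w_c)-WL(w)\|=O(1/c^2)$.

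For the lower-left entry $-\frac{2i}{c}\widetilde{WL}(w_c)$ and the lower-right entry $\frac{w}{c}SL(w_c)$, uniform-in-$c$ bounds on $\widetilde{WL}(w_c)$ and $SL(w_c)$ are all that is needed. The former follows from Lemma~\ref{lemma_Bessel_functions}~(v) (giving $|\sqrt{w_c}K_1(-i\sqrt{w_c}|x|)|\leq C(|x|^{-1/2}+|x|^{-1})\ee^{-\kappa|x|}$ uniformly in $c$ for $c$ large, since $w_c\to w$) and Proposition~\ref{prop.-pot.int.op.}, and the latter from Lemma~\ref{lemma_Bessel_functions}~(iv) in the same way. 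Combining all four block-wise estimates yields \eqref{convergence_Phi} with the dominant contribution being $O(1/c)$ from the lower row.

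For \eqref{convergence_Phi_star}, I would simply observe that $S_c^*=S_c$, hence
\[
\frac{1}{\sqrt{c}}S_c\Phi^c(\overline{\widetilde{w}_c})^\ast = \Bigl(\frac{1}{\sqrt{c}}\Phi^c(\overline{\widetilde{w}_c})S_c\Bigr)^\ast,
\]
and apply \eqref{convergence_Phi} with $w$ replaced by $\overline{w}$, taking adjoints on both sides. The resulting limit matrix is exactly the adjoint of $\bigl(\begin{smallmatrix}SL(\overline{w}) & -2iWL(\overline{w})\\0 & 0\end{smallmatrix}\bigr)$, which equals $\bigl(\begin{smallmatrix}SL(\overline{w})^* & 0\\2iWL(\overline{w})^* & 0\end{smallmatrix}\bigr)$, as required. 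No genuine obstacle arises; the main effort is the bookkeeping of the four Schur-type estimates and checking the integrability assumptions in Proposition~\ref{prop.-pot.int.op.}, which are immediate for the exponentially decaying kernels at hand.
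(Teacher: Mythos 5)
Your proof is correct and follows essentially the same route as the paper: block-entry-wise estimates of the matrix difference via the kernel bounds of Lemma~\ref{lemma-estimate of K_j} and the Schur-type Proposition~\ref{prop.-pot.int.op.}, with the adjoint identity giving \eqref{convergence_Phi_star}. One small caution, only because you specified $\theta=1/2$ in the other entries: for the uniform-in-$c$ bound on $\widetilde{WL}(w_c)$ you must take $\theta<1/2$ in Proposition~\ref{prop.-pot.int.op.}, since the $K_1$ kernel carries an $|x|^{-1}$ singularity and $\alpha^{2\theta}$ would fail to be integrable near $0$ at $\theta=1/2$; the paper sidesteps this by using $\theta=1/4$ throughout, which works for every kernel that appears.
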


\begin{proof}  
First, putting together the integral representation of $SL(w)$ in~\eqref{def_single_layer_potential}, the estimate of $t_3$ in Lemma~\ref{lemma-estimate of K_j}, and Proposition~\ref{prop.-pot.int.op.} (for $\theta = 1/4$) we deduce that 
\begin{equation*}
\|SL(w_c) - SL(w) \| =\mathcal{O}(c^{-2}).
\end{equation*}
Similarly, using the integral representation of $WL(w)$ in~\eqref{def_Psi}, the estimate of $t_4$ in Lemma~\ref{lemma-estimate of K_j}, and again Proposition~\ref{prop.-pot.int.op.} (for $\theta = 1/4$), we arrive at
\begin{equation*}
\| WL(w_c) - WL(w) \|=\mathcal{O}(c^{-2}).
\end{equation*}
Furthermore, the estimates for $t_1$ and $t_2$ in Lemma \ref{lemma-estimate of K_j}  yield with Proposition~\ref{prop.-pot.int.op.} (for $\theta = 1/4$) that 
\begin{equation*}
\left\|\frac{1}{c}SL(w_c)\right\|=\mathcal{O}(c^{-1}) \quad \text{and} \quad \left\|\frac{1}{c} \widetilde{WL}(w_c) \right\|=\mathcal{O}(c^{-1}).
\end{equation*}
Now, the estimate in~\eqref{convergence_Phi} follows immediately from \eqref{eq.-left side of D.resolvent}.  By hermitian conjugation, also~\eqref{convergence_Phi_star} is true.
\end{proof}

The next lemma is devoted to the convergence of $N_c(w)$ defined by~\eqref{def_N}. Recall that $M(w)$ denotes the Weyl function associated with the generalized boundary triple $\{ L^2(\Sigma; \mathbb{C}^2), \Gamma_0, \Gamma_1 \}$; cf. Proposition~\ref{IsQBT}.

\begin{lemma}\label{lemma-matrix in the resolvent}
Let  $w\in\mathbb{C}\setminus\mathbb{R}$ and set $V = \diag (1,-2i) \in \mathbb{C}^{2 \times 2}$. Then there exists a constant $\kappa >0$ depending only on $w$ and $\Sigma$  such that for all sufficiently large $c>0$
\begin{equation*}
\left\lVert N_c(w)-V^* M(w) V  \right\rVert\leq \frac{\kappa}{c^2}.
\end{equation*}
\end{lemma}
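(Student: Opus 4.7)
The plan is to compute $V^*M(w)V$ explicitly, compare it entrywise with $N_c(w)$, and then bound each entry of the difference using Lemma~\ref{lemma-estimate of K_j} together with Proposition~\ref{prop.-sing.int.op.on Sigma}. A direct multiplication using the definition of $M(w)$ from Proposition~\ref{IsQBT}~(iii) gives
\begin{equation*}
V^*M(w)V=\begin{pmatrix} S(w) & -2i\, W(w) \\ 2i\, W(\overline{w})^* & w\,S(w) \end{pmatrix},
\end{equation*}
so by~\eqref{def_N} the entries of $N_c(w)-V^*M(w)V$ are
\begin{equation*}
\begin{pmatrix} S(w_c)-S(w)+\frac{w}{c^2}S(w_c) & -2i(W(w_c)-W(w)) \\ 2i(W(\overline{w_c})^*-W(\overline{w})^*) & w(S(w_c)-S(w)) \end{pmatrix}.
\end{equation*}

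First I would treat the diagonal entries. By the integral representation~\eqref{IntegralRepresSLBIO}, $S(w_c)-S(w)$ is a boundary integral operator with kernel $\tfrac{1}{2\pi}t_3(x_\Sigma-y_\Sigma)$ where $t_3$ is as in Lemma~\ref{lemma-estimate of K_j}. The bound $|t_3(x)|\le\tfrac{\kappa_3}{c^2}(|x|^{-1/4}+1)\ee^{-\kappa_4|x|}$ combined with Proposition~\ref{prop.-sing.int.op.on Sigma} (with $\alpha(r)=\tfrac{\kappa_3}{c^2}(r^{-1/4}+1)\ee^{-\kappa_4 r}$, which is monotonically decreasing and integrable) yields $\|S(w_c)-S(w)\|=\mathcal{O}(c^{-2})$. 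For the remaining summand $(w/c^2)S(w_c)$ it suffices that $\|S(w_c)\|$ is uniformly bounded in $c$ (large), which follows by the same application of Proposition~\ref{prop.-sing.int.op.on Sigma} to the kernel of $S(w_c)$ itself via Lemma~\ref{lemma_Bessel_functions}~(iv), since $w_c\to w$.

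Next I would handle the off-diagonal entries, which are the substantive step. By Lemma~\ref{lemma_W}, $W(w_c)-W(w)$ is given (as a principal value) by the kernel
\begin{equation*}
\frac{i}{4\pi}\,\frac{\overline{\mathbf{x}_\Sigma}-\overline{\mathbf{y}_\Sigma}}{|x_\Sigma-y_\Sigma|}\,t_4(x_\Sigma-y_\Sigma),
\end{equation*}
where $t_4$ is as in Lemma~\ref{lemma-estimate of K_j}. The crucial point---and the main technical obstacle---is that although each of $W(w_c)$, $W(w)$ individually is only strongly singular (their kernels behave like $|x|^{-1}$ near the origin), the difference $t_4$ enjoys the much better estimate $|t_4(x)|\le\tfrac{\kappa_3}{c^2}(|x|^{-1/4}+1)\ee^{-\kappa_4|x|}$, so the factor $(\overline{\mathbf{x}_\Sigma}-\overline{\mathbf{y}_\Sigma})/|x_\Sigma-y_\Sigma|$ (which is bounded by $1$ in modulus) leaves us with a weakly singular, integrable kernel. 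Consequently $W(w_c)-W(w)$ is a genuine Bochner integral operator to which Proposition~\ref{prop.-sing.int.op.on Sigma} applies, delivering $\|W(w_c)-W(w)\|=\mathcal{O}(c^{-2})$. The bound for $W(\overline{w_c})^*-W(\overline{w})^*$ follows by taking adjoints.

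Combining all entrywise bounds gives $\|N_c(w)-V^*M(w)V\|\le\kappa/c^2$ for a constant $\kappa=\kappa(w,\Sigma)>0$ and all sufficiently large $c$, which is the claim.
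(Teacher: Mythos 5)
Your proposal is correct and follows essentially the same route as the paper: compute $V^*M(w)V$ and $N_c(w)$ entrywise, express the difference as an integral operator, bound each entry's kernel via Lemma~\ref{lemma-estimate of K_j} (the estimates for $t_2,t_3,t_4$), and conclude with Proposition~\ref{prop.-sing.int.op.on Sigma}. The paper bundles the argument by writing down the full $2\times 2$ kernel difference $H=Q_2-Q_1$ and estimating all four entries at once rather than separating diagonal, off-diagonal, and adjoint cases as you do, but the substance is identical.
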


\begin{proof}
First, taking Proposition~\ref{IsQBT}~(iii), Lemma~\ref{lemma_W}, and~\eqref{IntegralRepresSLBIO} into account, one sees that $M(w)$ is a strongly singular boundary integral operator with integral kernel $Q_1(x_\Sigma-y_\Sigma)$, where 
\begin{equation*}
Q_1(x) =  \begin{pmatrix}
\frac{1}{2 \pi} K_0(-i\sqrt{w} |x|) &  \frac{i \sqrt{w}}{4 \pi} \frac{\overline{\bf{x}}}{|x|}K_1( - i \sqrt{w} |x|) \\
 \frac{- i \sqrt{w}}{4 \pi} \frac{\bf{x}}{|x|}K_1( - i \sqrt{w} |x|) & \frac{w}{8 \pi} K_0(-i\sqrt{w} |x|)
\end{pmatrix}.
\end{equation*}
Hence, an explicit calculation yields 
\begin{equation*}
V^* Q_1(x) V = \begin{pmatrix}
\frac{1}{2 \pi} K_0(-i\sqrt{w} |x|) &   \frac{ \sqrt{w}}{2 \pi} \frac{\overline{\bf{x}}}{|x|}K_1( - i \sqrt{w} |x|) \\
\frac{\sqrt{w}}{2 \pi} \frac{\bf{x}}{|x|}K_1( - i \sqrt{w} |x|) & \frac{w}{2 \pi} K_0(-i\sqrt{w} |x|)
\end{pmatrix}.
\end{equation*}
In a similar vein, $N_c(w)$ is a strongly singular boundary integral operator with kernel $Q_2(x_\Sigma-y_\Sigma)$, where
\begin{equation*}
Q_2(x) = \begin{pmatrix}
\frac{1}{2 \pi} \big(1+\frac{w}{c^2}\big) K_0(-i\sqrt{w_c} |x|) &   \frac{ \sqrt{w_c}}{2 \pi} \frac{\overline{\bf{x}}}{|x|}K_1( - i \sqrt{w_c} |x|) \\
\frac{\sqrt{w_c}}{2 \pi} \frac{\bf{x}}{|x|}K_1( - i \sqrt{w_c} |x|) & \frac{w}{2 \pi} K_0(-i\sqrt{w_c} |x|)
\end{pmatrix}.
\end{equation*}
Therefore, $N_c(w)-V^* M(w) V$ is an integral operator with kernel $H (x_\Sigma - y_\Sigma) := Q_2 (x_\Sigma - y_\Sigma) - Q_1 (x_\Sigma - y_\Sigma)$ with entries 
\begin{equation*}
\begin{split}
H_{11}(x) &= \frac{1}{2 \pi} \big(1+\frac{w}{c^2}\big) K_0(-i\sqrt{w_c} |x|) -  \frac{1}{2 \pi}  K_0(-i\sqrt{w} |x|), \\
H_{12}(x) &=  \frac{ \sqrt{w_c}}{2 \pi} \frac{\overline{\bf{x}}}{|x|}K_1( - i \sqrt{w_c} |x|) - \frac{ \sqrt{w}}{2 \pi} \frac{\overline{\bf{x}}}{|x|}K_1( - i \sqrt{w} |x|), \\
H_{21}(x) &= \frac{\sqrt{w_c}}{2 \pi} \frac{\bf{x}}{|x|}K_1( - i \sqrt{w_c} |x|) - \frac{\sqrt{w}}{2 \pi} \frac{\bf{x}}{|x|}K_1( - i \sqrt{w} |x|), \\
H_{22}(x) &=  \frac{w}{2 \pi} K_0(-i\sqrt{w_c} |x|) - \frac{w}{2 \pi} K_0(-i\sqrt{w} |x|).
\end{split}
\end{equation*}
It follows from the estimates for $t_2,\, t_3$, and $t_4$ from Lemma \ref{lemma-estimate of K_j}  that there exist constants $\kappa_3, \kappa_4 >0$ depending only on $w$ and $\Sigma$ such that for all sufficiently large $c>0$  and $i,\, j\in\{1,2\}$
\begin{equation*} 
|H_{ij}(x)| \leq \frac{\kappa_3}{c^2} (|x|^{-1/4}+1)\ee^{-\kappa_4 |x|}\qquad \forall x\in\R^2\setminus\{0\}
\end{equation*}
is valid. Consequently, Proposition \ref{prop.-sing.int.op.on Sigma} implies the claimed operator norm estimate.
\end{proof}

Finally, we will need the following elementary observation. 

\begin{lemma} \label{lemma_composition}
Let $a,\, b>0$, $\mathscr{X}_{i}$, $i\in\{1,2,3\}$, be Banach spaces and $(A_n)$ and $(B_n)$ stand for sequences of bounded operators from $\mathscr{X}_2$ to $\mathscr{X}_3$ and from $\mathscr{X}_1$ to $\mathscr{X}_2$ that converge to $A$ and $B$ in the respective operator norms with the convergence rate $\mathcal{O}(n^{-a})$ and $\mathcal{O}(n^{-b})$, respectively. Then
$$\|A_n B_n-AB\|_{\mathscr{X}_1\to\mathscr{X}_3}=\mathcal{O}(n^{-\min\{a,b\}})\quad\text{ as }n\to+\infty.$$
\end{lemma}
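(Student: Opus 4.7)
The plan is to use the standard telescoping identity for a product of two operators. Specifically, I would write
\begin{equation*}
A_n B_n - AB = A_n(B_n - B) + (A_n - A) B,
\end{equation*}
which is valid for every $n$ since all the compositions are well-defined thanks to the matching source/target spaces $\mathscr{X}_1, \mathscr{X}_2, \mathscr{X}_3$.

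Next, I would apply the triangle inequality together with the submultiplicativity of the operator norms to obtain
\begin{equation*}
\| A_n B_n - AB \|_{\mathscr{X}_1 \to \mathscr{X}_3} \leq \| A_n \|_{\mathscr{X}_2 \to \mathscr{X}_3} \| B_n - B \|_{\mathscr{X}_1 \to \mathscr{X}_2} + \| A_n - A \|_{\mathscr{X}_2 \to \mathscr{X}_3} \| B \|_{\mathscr{X}_1 \to \mathscr{X}_2}.
\end{equation*}
Since $A_n \to A$ in operator norm, the sequence $(\|A_n\|_{\mathscr{X}_2 \to \mathscr{X}_3})$ is bounded; denote a uniform bound by $C > 0$. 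Combining this with the hypothesis $\| B_n - B \|_{\mathscr{X}_1 \to \mathscr{X}_2} = \mathcal{O}(n^{-b})$ and $\| A_n - A \|_{\mathscr{X}_2 \to \mathscr{X}_3} = \mathcal{O}(n^{-a})$ yields
\begin{equation*}
\| A_n B_n - AB \|_{\mathscr{X}_1 \to \mathscr{X}_3} \leq C \cdot \mathcal{O}(n^{-b}) + \mathcal{O}(n^{-a}) \cdot \| B \|_{\mathscr{X}_1 \to \mathscr{X}_2} = \mathcal{O}(n^{-\min\{a,b\}}),
\end{equation*}
which is the claim. There is no real obstacle here; the whole argument is elementary Banach space bookkeeping, and the only subtlety worth mentioning is the uniform boundedness of $(\|A_n\|)$, which is immediate from norm convergence (one could equally well use $\|A_n\| \leq \|A\| + \|A_n - A\|$).
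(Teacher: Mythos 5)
Your proof is correct and follows essentially the same route as the paper: the identity $A_n B_n - AB = A_n(B_n - B) + (A_n - A)B$, the triangle inequality with submultiplicativity, and the uniform bound on $\|A_n\|$ coming from norm convergence. The paper's proof stops at the displayed inequality without even spelling out the boundedness of $(\|A_n\|)$, so if anything your version is slightly more explicit.
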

\begin{proof}
By the triangle inequality, we get
\begin{equation*}
\begin{split}
\|A_n B_n-AB\|&_{\mathscr{X}_1\to\mathscr{X}_3}\\
&\leq\|A_n\|_{\mathscr{X}_2\to\mathscr{X}_3}\|B_n-B\|_{\mathscr{X}_1\to\mathscr{X}_2}+\|A_n-A\|_{\mathscr{X}_2\to\mathscr{X}_3}\|B\|_{\mathscr{X}_1\to\mathscr{X}_2}.
\end{split}
\end{equation*}

\end{proof}

Now, we are prepared to find the non-relativistic limit of $D_{F_c,G_c}^c$.

\begin{theorem}\label{theorem-nr lim}
Let $F,\, G\in L^2(\Sigma;\mathbb{C}^{2\times 2})$ be such that $\Pi_F^\ast\Pi_G=\Pi_G^\ast\Pi_F$, $F_c$ and $G_c $ be defined by~\eqref{TransformationF}, and $V = \diag (1,-2i)$.
Furthermore, let $T_B$ be defined by~\eqref{eq:def_TB} with $B=\Pi_{VF}^\ast\Pi_{VG}$ and $D_{F_c,G_c}^c$ be as in \eqref{DefNonLocalDirac}. Then  for every $w \in \mathbb{C} \setminus \mathbb{R}$  there exists a constant $\kappa > 0$ such that for all $c > 0$ sufficiently large
\begin{equation*}
\left\lVert\left(D_{F_c,G_c}^c-w -\frac{c^2}{2} \right)^{-1}-(T_B-w)^{-1}\otimes \begin{pmatrix}
1 & 0\\
0 & 0
\end{pmatrix}\right\rVert \leq \frac{\kappa}{c}.
\end{equation*}
\end{theorem}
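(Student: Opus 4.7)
The plan is to write down the Krein-type resolvent formula~\eqref{equation_krein_Dirac} for $(D_{F_c,G_c}^c-\widetilde{w}_c)^{-1}$ and compare it, piece by piece, with the Krein formula for $(T_B-w)^{-1}$ obtained from Theorem~\ref{TBSelfadjoint}~(iii) applied to the factorisation $B=B_1B_2$ with $B_1:=\Pi_{VF}^{\ast}=V\Pi_F^{\ast}:\C^2\to L^2(\Sigma;\C^2)$ and $B_2:=\Pi_{VG}=\Pi_G V^{\ast}:L^2(\Sigma;\C^2)\to\C^2$; this produces
\begin{equation*}
(T_B-w)^{-1}=(-\Delta-w)^{-1}-\gamma(w)V\Pi_F^{\ast}\bigl(I+\Pi_G V^{\ast}M(w)V\Pi_F^{\ast}\bigr)^{-1}\Pi_G V^{\ast}\gamma(\overline{w})^{\ast}
\end{equation*}
with the same finite-dimensional intermediate space $\mathcal{K}=\C^2$ as in the Dirac Krein formula. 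To keep the spinor bookkeeping readable, I denote by $P_1:L^2(\R^2;\C^2)\to L^2(\R^2)$ the projection onto the first spinor component, so that $P_1^{\ast}AP_1=A\otimes\diag(1,0)$ for any operator $A$ on $L^2(\R^2)$.

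The first and main step is the non-relativistic limit of the free resolvent,
\begin{equation*}
\bigl\|(D_0^c-\widetilde{w}_c)^{-1}-(-\Delta-w)^{-1}\otimes\diag(1,0)\bigr\|=\mathcal{O}(c^{-1}).
\end{equation*}
Using the algebraic identity $(D_0^c)^2=-c^2\Delta+c^4/4$ together with $\widetilde{w}_c^2=c^2 w_c+c^4/4$ one gets $(D_0^c-\widetilde{w}_c)(D_0^c+\widetilde{w}_c)=c^2(-\Delta-w_c)$ on $H^2(\R^2;\C^2)$, which after inversion and rearrangement yields on $L^2(\R^2;\C^2)$
\begin{equation*}
(D_0^c-\widetilde{w}_c)^{-1}=-ic^{-1}(-\Delta-w_c)^{-1}(\sigma\cdot\nabla)+(-\Delta-w_c)^{-1}\bigl(\diag(1,0)+wc^{-2}I\bigr).
\end{equation*}
The first summand is $\mathcal{O}(c^{-1})$ because $(\sigma\cdot\nabla)(-\Delta-w_c)^{-1}$ (which coincides with $(-\Delta-w_c)^{-1}(\sigma\cdot\nabla)$ by commutativity on $H^2$) is uniformly bounded in $c$ for large $c$, and the second converges to $(-\Delta-w)^{-1}\otimes\diag(1,0)$ at rate $\mathcal{O}(c^{-2})$ since $w_c-w=w^2/c^2$ and the free resolvent is holomorphic in its spectral parameter.

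Second, Lemma~\ref{lemma-parts of the resolvent} gives, with rate $\mathcal{O}(c^{-1})$, the convergence of $c^{-1/2}\Phi^c(\widetilde{w}_c)S_c$ and $c^{-1/2}S_c\Phi^c(\overline{\widetilde{w}_c})^{\ast}$ to block matrices whose only non-vanishing entries are $SL(w),-2iWL(w)$ in the top row, respectively $SL(\overline{w})^{\ast},2iWL(\overline{w})^{\ast}$ in the first column. Comparing with Proposition~\ref{IsQBT}~(i),~(ii) and $V=\diag(1,-2i)$, these limits are precisely $P_1^{\ast}\gamma(w)V$ and $V^{\ast}\gamma(\overline{w})^{\ast}P_1$. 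Composing with the bounded operators $\Pi_F^{\ast}$ and $\Pi_G$ preserves the rate. For the middle factor, Lemma~\ref{lemma-matrix in the resolvent} and the boundedness of $\Pi_F^{\ast}$, $\Pi_G$ yield
\begin{equation*}
\bigl\|\Pi_G N_c(w)\Pi_F^{\ast}-\Pi_G V^{\ast}M(w)V\Pi_F^{\ast}\bigr\|=\mathcal{O}(c^{-2})
\end{equation*}
on the finite-dimensional space $\C^2$; since $w\in\C\setminus\R\subset\rho(T_B)$, Theorem~\ref{TBSelfadjoint}~(ii) shows that $I+\Pi_G V^{\ast}M(w)V\Pi_F^{\ast}$ is invertible, and so is the perturbed matrix for $c$ large, with the same $\mathcal{O}(c^{-2})$ rate for the inverses by continuity of matrix inversion.

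Finally, I would plug the four rate estimates into Lemma~\ref{lemma_composition}; this shows that the perturbation term in~\eqref{equation_krein_Dirac} converges in operator norm, at rate $\mathcal{O}(c^{-1})$, to
\begin{equation*}
P_1^{\ast}\bigl[\gamma(w)V\Pi_F^{\ast}\bigl(I+\Pi_G V^{\ast}M(w)V\Pi_F^{\ast}\bigr)^{-1}\Pi_G V^{\ast}\gamma(\overline{w})^{\ast}\bigr]P_1,
\end{equation*}
the rate being dictated by the potential limits of Lemma~\ref{lemma-parts of the resolvent}. Adding the free-resolvent Step~1 and identifying the bracket with the Krein correction for $T_B$ gives the claim. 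The main technical obstacle is the free-resolvent estimate of Step~1, which relies on the algebraic identity for $(D_0^c)^2$ and the uniform $L^2$-boundedness of $(\sigma\cdot\nabla)(-\Delta-w_c)^{-1}$; the remaining work is bookkeeping of operator-norm convergences already provided by Lemmas~\ref{lemma-parts of the resolvent},~\ref{lemma-matrix in the resolvent} and~\ref{lemma_composition}.
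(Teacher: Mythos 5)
Your proof is correct and follows essentially the same route as the paper: write the Krein formula~\eqref{equation_krein_Dirac}, use Lemmas~\ref{lemma-parts of the resolvent} and~\ref{lemma-matrix in the resolvent} (together with Lemma~\ref{lemma_composition} and stability of invertibility on $\C^2$) to pass to the limit term by term, and identify the result with the Krein formula for $T_B$ from Theorem~\ref{TBSelfadjoint} via the factorisation $B=\Pi_{VF}^*\Pi_{VG}$. The only noticeable difference is that you prove the free-resolvent estimate $\|(D_0^c-\widetilde{w}_c)^{-1}-(-\Delta-w)^{-1}\otimes\diag(1,0)\|=\mathcal{O}(c^{-1})$ directly from $(D_0^c)^2=-c^2\Delta+c^4/4$, whereas the paper simply cites \cite[Lem.~3.1]{BHS23} or \cite[Cor.~6.2]{Thaller}; your computation is correct and amounts to inlining that reference. (One trivial slip: the first Krein-formula reference should be to Theorem~\ref{TBSelfadjoint}~(ii) rather than the nonexistent item~(iii).)
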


\begin{proof} 
Because of \cite[Lem.~3.1]{BHS23} or \cite[Cor. 6.2]{Thaller} one finds a constant $C>0$ such that for all sufficiently large $c>0$,
\begin{equation} \label{eq_res_free_nonrel_lim}
\left\| \left(D_0^c-w-\frac{c^2}{2} \right)^{-1} - (-\Delta-w )^{-1} \otimes  \begin{pmatrix} 1 &0 \\ 0 & 0 \end{pmatrix} \right\| \leq \frac{C}{c}.
\end{equation}
Moreover, using Lemmas~\ref{lemma-matrix in the resolvent} and \ref{lemma_composition}, and  \cite[Thm.~IV.1.16]{kato}, we find that
\begin{equation*} 
\begin{split}
\big(I + \Pi_{G}S_c \mathcal{C}^c(\widetilde{w}_c)&S_c\Pi_{F}^\ast\big)^{-1} = \big(I + \Pi_{G}N_c(w)\Pi_{F}^\ast \big)^{-1}\\
& \stackrel{c\to+\infty}{\longrightarrow} \big(I+\Pi_{G}V^\ast M(w)V\Pi_{F}^\ast \big)^{-1}=\big(I+\Pi_{VG} M(w)\Pi_{VF}^\ast \big)^{-1} 
\end{split}
\end{equation*}
and that the rate of convergence is $\mathcal{O}(c^{-2})$. 
Substituting this together with~\eqref{convergence_Phi}, \eqref{convergence_Phi_star}, and \eqref{eq_res_free_nonrel_lim} into~\eqref{equation_krein_Dirac}, and using Lemma~\ref{lemma_composition}, we obtain that
\begin{equation*}
\begin{split}
  \lim_{c\to +\infty} & \bigg( D_{F_c,G_c}^c -  w - \frac{c^2}{2}  \bigg)^{-1} = \,(-\Delta-w )^{-1} \otimes  \begin{pmatrix} 1 &0 \\ 0 & 0 \end{pmatrix}\\
    &-\begin{pmatrix} SL(w) & WL(w)\\ 
		0 & 0
	  \end{pmatrix}\Pi_{VF}^\ast 	
  \big(I+\Pi_{VG} M(w)\Pi_{VF}^\ast \big)^{-1} \Pi_{VG} 
  	\begin{pmatrix}
   	SL(\overline{w})^\ast & 0 \\
   	WL(\overline{w})^\ast & 0
	\end{pmatrix}
\end{split}
\end{equation*}
in the operator norm with the convergence rate $\mathcal{O}(c^{-1})$. The limit can be rewritten as
\begin{equation*}
\bigg[(-\Delta-w )^{-1}  - \gamma(w) B_1 \big(I+B_2 M(w) B_1 \big)^{-1}  B_2\gamma(\overline{w})^\ast \bigg] \otimes  \begin{pmatrix} 1 &0 \\ 0 & 0 \end{pmatrix} 
\end{equation*}
where $B_1=\Pi_{VF}^\ast$ and $B_2=\Pi_{VG}$ are compact, and $\gamma(w)$ and $M(w)$ are the values of the $\gamma$-field  and the Weyl function associated with the generalized boundary triple $\{ L^2(\Sigma; \mathbb{C}^2), \Gamma_0, \Gamma_1 \}$; cf. Proposition~\ref{IsQBT}. Since the latter expression coincides exactly with $(T_B - w)^{-1} \otimes \big( \begin{smallmatrix} 1 & 0 \\ 0 & 0 \end{smallmatrix} \big)$, see Theorem~\ref{TBSelfadjoint}, the claim of the theorem follows.
\end{proof}

\subsection*{Acknowledgements}
The authors thank the Ministry of Education, Youth and Sports of the Czech Republic, the BMBWF, and the Austrian Agency for International Cooperation in Education and Research (OeAD) for support within the project 8J23AT025 and CZ 16/2023, respectively. This research was funded in part by the Austrian Science Fund (FWF) 10.55776/P 33568-N. L. Heriban acknowledges  the support by the EXPRO grant No. 20-17749X of the Czech Science Foundation (GA\v{C}R).

\end{document}